\documentclass[11pt,reqno]{amsart}
\usepackage[english]{babel}

\usepackage{amsmath, amssymb, amsthm, bm, esint, mathrsfs, stmaryrd, cancel, nicefrac, upgreek}
\usepackage{comment, subfiles, environ, soul, relsize, aligned-overset, array, enumerate, xparse, etoolbox}
\usepackage{extarrows}    
\usepackage{graphicx, float, rotating, wrapfig, tikz}
\DeclareGraphicsExtensions{.bmp,.png,.pdf,.jpg}
\usetikzlibrary{positioning,babel}
\usepackage{listings}
\usepackage{fancyhdr, datetime, anysize, multicol}
\usepackage{hyperref}
\usepackage{setspace}
\usepackage{enumitem}
\usepackage{titlesec}

\usepackage[paper=a4paper]{geometry}
\setlist{topsep=0.1em,itemsep=0.3em,parsep=0em,partopsep=0em}

\hypersetup{
  colorlinks=true,
  citecolor=red!80!black,
  linkcolor=blue!75!black,
  urlcolor=blue!75,
  citebordercolor=red,
  linkbordercolor=blue}

\def\om{\omega}
\def\veps{\varepsilon}
\def\vphi{\varphi}
\def\on{\operatorname}
\def\ds{\displaystyle}
\def\xlim{\xrightarrow}
\def\wlim{\rightharpoonup}

\def\supp{\on{supp}}

\def\ad{\mathrm{ad}}

\def\wt{\widetilde}
\def\wh{\widehat}
\def\ul{\underline}

\newcommand{\set}[1]{\left\{ #1\right\}}
\newcommand{\brk}[1]{\left[ #1\right]}
\newcommand{\nor}[2]{\left\|#1\right\|_{#2}}

\def\qin{\quad\text{ in }~}

\def\qan{\quad\text{ and }\quad}

\def\bbE{{\mathbb E}}

\def\N{\mathbb{N}} 
\def\bbP{\mathbb P}

\def\cA{{\mathcal A}}

\def\cE{{\mathcal E}}

\def\cH{{\mathcal H}}
\def\cI{{\mathcal I}}

\def\cR{{\mathcal R}}

\def\cS{{\mathcal S}}

\def\cZ{{\mathcal{Z}}}

\def\sJ{\mathscr J}
\def\sL{\mathscr L}

\def\bD{\mathbf D}

\def\bg{\mathbf g}

\def\bQ{\mathbf Q}
\def\bR{\mathbf R}
\def\bN{\mathbf N}

\def\bT{\mathbf T}

\def\bv{\bm v}
\def\bt{\mathbf t}
\def\bw{\bm w}
\def\bW{\mathbf W}

\def\rE{{\rm E}}

\def\rL{{\rm L}}

\def\rP{{\rm P}}

\def\rT{{\rm T}}
\def\rU{{\rm U}}

\def\rW{{\rm W}}
\def\rZ{{\rm Z}}

\def\R{\bR}
\def\N{\bN}

\def\pd{\mathrm t}  
\def\ps{\bt}        

\def\argmin{\mathop{{\mathrm{argmin}}}\nolimits}

\let\oldforall=\forall
\renewcommand\forall{\oldforall\,}

\newcommand{\spa}{\mathrm{span}}

\numberwithin{equation}{section}

\newtheoremstyle{assump} 
    {}                    
    {}                    
    {}                   
    {}                           
    {\normalfont\scshape}                   
    {.}                          
    { }                       
    {}  

\newtheorem{definition}{Definition}[section]

\newtheorem{theorem}{Theorem}[section]
\newtheorem{lemma}{Lemma}[section]
\newtheorem{prop}{Proposition}[section]
\theoremstyle{definition}
\newtheorem{remark}{Remark}[section]

\theoremstyle{assump}
\newtheorem{assump}{Assumption}[section]

\makeatletter
\def\section{\@startsection{section}{1}%
  \z@{5ex \@plus 1ex \@minus .2ex}{3ex \@plus .2ex}%
  {\Large\normalfont\scshape\centering}}

\def\subsection{\@startsection{subsection}{2}%
  \z@{2.0ex \@plus .8ex \@minus .2ex}{.8ex \@plus .2ex}%
  {\large\normalfont\bf}}

\def\subsubsection{\@startsection{subsubsection}{3}%
  \z@{1.5ex \@plus .8ex \@minus .2ex}{.5ex \@plus .2ex}%
  {\normalfont\itshape}}
\makeatother

\newcommand{\acknowledgments}[1]{%
  \refstepcounter{section}
  \addcontentsline{toc}{section}{Acknowledgments}
  \noindent\textbf{Acknowledgments.} #1
}

\title{Risk-averse optimization under distributional uncertainty with Rockafellian relaxation}
\author{Harbir Antil}
\address{Department of Mathematical Sciences and Center for Mathematics and Artificial Intelligence,
George Mason University, Fairfax, VA, 22030, USA}
\email{hantil@gmu.edu}
\thanks{This work is partially supported by the Office of Naval Research (ONR) under Award NO: N00014-24- 1-2147, the Air Force Office of Scientific Research (AFOSR) under Award NO: FA9550-25-1-0231, NSF grant DMS-2408877, and SURE-AI Centre grant 357482, Research Council of Norway.}

\author{Alonso J. Bustos}
\address{GIANuC$^2$ and Departamento de Matem\'atica y F\'isica Aplicadas, 
Universidad Cat\'olica de la Sant\'isima Concepci\'on, Casilla 297, Concepci\'on, Chile, and CI$^2$MA, Universidad de Concepci\'on, Casilla 160-C, Concepci\'on,
Chile}
\email{alonso.bustos@ucsc.cl}
\thanks{}

\author{Sean P. Carney}
\address{Department of Mathematics,
Union College, Schenectady, NY, 12308, USA}
\email{carneys@union.edu}
\thanks{}

\author{Benjamín Venegas}
\address{GIANuC$^2$ and Departamento de Matem\'atica y F\'isica Aplicadas, 
Universidad Cat\'olica de la Sant\'isima Concepci\'on, Casilla 297, Concepci\'on, Chile, and CI$^2$MA, Universidad de Concepci\'on, Casilla 160-C, Concepci\'on,
Chile}
\email{benjamin.venegas@ucsc.cl}
\thanks{}

\usepackage[textsize=tiny]{todonotes}
\definecolor{lightgray}{gray}{0.6} 

\begin{document}
\setlength{\headheight}{13.0pt}
\newpage
\begin{abstract}
A framework for risk-averse optimization problems is introduced 
that is resilient to ambiguities
in the true form of the underlying probability distribution.
The focus is on problems with partial differential equations 
(PDEs) as constraints, 
although the formulation is more broadly applicable. 
The framework is based on combining risk measures with problem relaxation techniques, 
and it builds off 
of previous advances for risk-neutral problems. 
This work advances the existing theory with strengthened
$\Gamma$-convergence results, novel existence results and 
first-order optimality criteria. 
In particular, 
the theoretical approach naturally accommodates infinite-dimensional probability 
spaces;  no finite-dimensional noise assumption is needed. 
The framework blends aspects of both 
distributionally robust optimization (DRO) and distributionally 
optimistic optimization (DOO) approaches. 
The DRO aspect facilitates strong out-of-sample performance, while the 
DOO aspect takes care of adversarial and outlier data, as illustrated 
with numerical examples.
%
\end{abstract}

\maketitle

\section{Introduction}\label{sec:introduction}
In a wide variety of applications in financial planning, operations management,
engineering design, and statistical modeling, it is critical to account for 
uncertainty in a conservative, risk-averse manner. 
Risk measures are a well-established tool in stochastic optimization to arrive 
at decisions that minimize exposure to exceptionally poor outcomes \cite{royset2025risk}.
Given a random variable $\xi$ defined on a probability space $(\Xi, \cA, \bbP)$, $1\le p\le \infty$, and a 
 coherent risk measure    
\cite[Section 6.3]{shapiro2014lectures} 
$\cR: L^p(\Xi, \cA, \bbP) \to \R$, a typical risk-averse stochastic program can 
be written in the form 
\begin{equation}\label{eq:generic}
\min_{z\in \rZ_\ad} \mathcal{R}[f(\xi,z)] .
\end{equation}
Here the deterministic control variable $z$ belongs to an admissible 
subset $\rZ_\ad$ of some Banach space $Z$, and $f: \Xi \times Z \to \R$  
is integrable for each $z \in Z$. 

In practice, the precise form of the uncertainty itself
is often unsettled; i.e.\ the sample space $\Xi$ and probability measure $\bbP$ are
themselves uncertain.
The assumed probability distribution might 
not accurately reflect the true variability, or extreme outcomes 
may appear outside the nominal support of the distribution. These inaccuracies could
arise from measurement error or adversarial corruption, for example.
Ignoring these possibilities can lead to optimization solutions 
that perform poorly under real-world conditions, particularly 
in the risk-averse setting \cite{kouri2018optimization}. 

A common approach to account for such meta-uncertainty is
to consider distributionally robust optimization (DRO) formulations,
where one minimizes an expected value over a collection $\{\bbP_i\}_{i\in \cI}$ 
of plausible measures, which is termed the ambiguity set. 
In risk-averse approaches, worst-case scenarios are
considered, which yields a minimax problem 
\begin{equation}\label{eq:generic_dro}
\min_{z\in \rZ_\ad}  \sup_{i \in \cI} \int_{\Xi} f(\xi, z) d\bbP_i . 
\end{equation}
The literature on DRO is quite vast; we refer to the reviews 
\cite{ben2009robust,postek2016computationally,rahimian2022frameworks,lin2022distributionally}
and references therein. 

It is well known that minimizing a coherent measure of risk $\cR$ is itself 
an example of a distributionally robust optimization problem. 
For example, if one takes $\cR =\on{CVaR}_{\beta}$, i.e.\ the Conditional Value-at-Risk
at level $\beta \in (0,1)$ \cite{RU,RU-2000}, then \eqref{eq:generic} becomes 
\begin{equation}\label{eq:generic_cvar}
\min_{z\in\rZ_\ad} \on{CVaR}_{\beta}(f(\xi,z)) = 
   \min_{z\in\rZ_\ad} \inf_{\gamma\in \R} \Big( \gamma + \frac{1}{1-\beta} \int_{\Xi} (f(\xi,z)-\gamma)_+ \, d\bbP\Big). 
\end{equation}
By the Fenchel-Moreau theorem \cite[Theorem 9.3.5]{ABM}, \eqref{eq:generic_cvar} is equivalent to 
\begin{equation}
\min_{z\in\rZ_\ad} \sup_{\theta \in \Theta_{\beta}} \int_{\Xi} \theta(\xi) f(\xi,z) \, d\bbP, 
\end{equation}
where the so-called risk envelope 
$$
\Theta_{\beta} = \Big\{ \vartheta \in L^{\infty}(\Xi, \cA,\bbP) 
   \, \Big\vert \, \int_{\Xi} \vartheta(\xi)\, d\bbP = 1, 
   \, 0 \le \vartheta \le (1-\beta)^{-1} \, \bbP\text{ a.s.}   \Big\}
$$ 
plays the role of the ambiguity set. 
For more general coherent risk measures, an analogous result can be found in 
\cite[Theorem 6.7]{shapiro2014lectures}.

In the context of statistical estimation, DRO is known to exhibit strong out-of-sample performance; it allows one to hedge against 
common issues such as overfitting and ``distributional shifts''---deviations  
between the distributions observed in the model training and
model deployment environments. 
We refer to the comprehensive discussion in \cite{blanchet2025distributionally} 
for more details. 
On the other hand, DRO can be ill-suited for problems with contaminated or outlier data. 
At scenarios $\xi$ that cause the function 
$f(\xi,z)$ to be exceptionally large (or possibly unbounded) at certain  
control values $z$ (termed ``endogenous'' outliers by Jiang and Xie 
\cite{jiang2024distributionally}) the DRO approach 
necessarily leads to higher objective function values, owing to the supremum 
in \eqref{eq:generic_dro}. 

An alternative approach for problems with data corruption is  
distributionally optimistic optimization (DOO). 
Generally speaking, DOO seeks best-case scenarios, for example, by replacing the supremum in \eqref{eq:generic_dro} with an infimum.  
Although the literature is not as vast as for DRO, optimistic approaches are also well-studied; 
see \cite{bi2004support,hanasusanto2017ambiguous,agarwal2020optimistic,gotoh2025data}, for example. They are also closely connected to techniques from robust statistics \cite{huber1992robust,maronna2019robust}, such as using a trimmed loss \cite{aravkin2020trimmed}, as recently pointed out by
the authors in \cite{jiang2024distributionally} and \cite{blanchet2024automatic}. This connection 
can help explain the success
of these approaches in the presence of outlier data. 
The work in \cite{ACDR} was the first example of an optimistic approach to 
stochastic optimization problems with 
partial differential equations (PDEs) as constraints.

An interesting and relatively unexplored approach to stochastic optimization 
is to combine DOO with DRO to simultaneously 
improve both resilience to corrupted data and out-of-sample performance. 
A recent example is from \cite{jiang2024distributionally}, who proposed
combining a ``favorable $\on{CVaR}_{\beta}$'' with a DRO approach based on measuring 
ambiguity sets with the Wasserstein $\infty$-distance. 
Similar approaches can be found in \cite{norton2017optimistic,nguyen2019optimistic,song2020optimistic,blanchet2024automatic}.  
Distributionally robust chance constrained programs \cite{hanasusanto2015distributionally,chen2023approximations,shen2023chance}
can also be classified in this way.

The current work presents a novel approach to combining robust and optimistic frameworks. 
The approach is based on employing both risk measures and Rockafellian relaxation techniques.
Although the framework is more general, our focus is on 
problems that are constrained by PDEs. 
In this setting, the generic stochastic optimization problem \eqref{eq:generic}
is modified to include the composition of a 
(random-valued) control-to-state map $s(\xi,z)$ with a quantity-of-interest
map $g$, as well as an additional control penalty term.

In particular, this work uses the coherent risk measure $\cR = \on{CVaR}_{\beta}$ (Eq.~\eqref{eq:generic_cvar}),
which originated in financial mathematics and portfolio optimization \cite{RU,RU-2000}. 
The authors in \cite{KS} were the first to use $\cR = \on{CVaR}_{\beta}$ in the 
context of PDE constrained optimization (PDECO). Analysis and optimization algorithms 
can be found in \cite{kouri2018existence} and \cite{kouri2022primal,garreis2021interior,beiser2023adaptive}, 
respectively. Risk measures for PDECO have also been used in conjunction with reduced order
models \cite{yang2017algorithms,heinkenschloss2018conditional} and 
tensor-train decompositions \cite{antil2023ttrisk}. Application areas include 
building and shape design \cite{kolvenbach2018approach,chaudhuri2020risk,kodakkal2022risk}, 
noise reduction \cite{yang2017algorithms}, and digital twins \cite{AAL-2025}. 
These lists are far from exhaustive, however, and risk-averse PDECO is a continually evolving field.

Rockafellian relaxation is an optimistic approach to stochastic optimization 
based on ideas originating in \cite{Rockafellar1963,Rockafellar1970}. Rather than replacing the 
supremum in \eqref{eq:generic_dro} with an infimum (or suitable reformulation thereof), 
the approach seeks best-case scenarios by not only optimizing over the control 
variable $z$, but also jointly with a choice of model perturbations \cite{royset2021good}. 
Similar ideas based on minimizing ``bivariate functions'' and ``perturbation functions'' can 
also be found in \cite{bauschke2020convex} and \cite{zalinescu2002convex}, respectively. 
Rockafellians are intimately related to Lagrangian relaxations and dual problems, as discussed
in \cite[Chapter 5]{royset2021optimization} and \cite{deride2025approximations}. 
The work in \cite{ACDR} established theoretical foundations as well as practical algorithms for Rockafellian relaxation applied to risk-neutral 
PDECO problems, building off of prior work for 
finite dimensional 
problems in \cite{royset2025rockafellian}. The numerical examples 
in \cite{ACDR} additionally showcased that 
Rockafellian relaxation enables recovery of the optimal control to an \emph{uncorrupted} 
optimization problem, even when solving with corrupted data. 
See also recent work applying Rockafellian relaxation to handle erroneous training labels in deep learning \cite{chen2024mitigating}, as well as extensions to problems with arbitrary Borel distributions and discontinuous integrands (including chance-constrained programs) \cite{tian2025approximating}.

The first of two main objectives of the present work is to introduce a number of
theoretical advances from the work in \cite{ACDR}. 
We prove novel existence results for Rockafellians constrained by PDEs, and, under 
appropriate regularity assumptions on the control-to-state map, we rigorously 
 derive first-order optimality conditions. 
Importantly, our construction allows for infinite-dimension (in the Schauder sense) probability spaces. 
In contrast to many presentations in the literature, no finite-noise 
assumption is needed, cf.~\cite{kouri2018optimization}. 
We also present a unified theory for $\Gamma$-convergence of corrupted Rockafellians 
to the original, uncorrupted one in the limit of vanishing data corruption. As is well-known, 
this guarantees preservation of convergence of minimizers, demonstrating the method's consistency.

The second main objective is to illustrate with numerical examples the practical utility
of Rockafellian relaxation for risk-averse PDECO problems with corrupted data. This builds
off of the numerical examples presented in \cite{ACDR} in the risk-neutral setting.
We observe that the method can recover the optimal control of a risk-averse problem
in the presence of endogenous outliers \cite{jiang2024distributionally} caused by corrupted
data samples. 
Additionally, we illustrate how the combined DOO and DRO approach can uncover  ``hidden'' possibilities:
using the $\on{CVaR}_{\beta}$ risk measure and minimizing over the Rockafellian decision variable allow
for the recovery of controls that \textit{would} be optimal, were the problem data different.
The principal utility of the combined approach is that it can equip decision makers with
 insight into the sensitivity of optimal control strategies under
changing levels of \textit{both} problem corruption and risk-aversion.

The remainder of this paper is structured as follows. In Section \ref{sec:notation and preliminaries}, we introduce 
some notations and preliminary definitions. Section~\ref{sec:uncorrupted problems} studies the original uncorrupted problem together with natural modifications that prepare the ground for the introduction of corruption. Section~\ref{sec:unified analysis} constitutes the main part of this work, in which we develop a systematic framework to analyze corrupted versions of the problems introduced previously. In particular, we introduce several constructions that allow us to handle arbitrary sample spaces, which may be infinite-dimensional and of infinite measure. Then, we prove the existence of minimizers, first-order optimality conditions, and the convergence of corrupted minimizers to uncorrupted minimizers. In Section \ref{sec:particular cases}, we particularize the results given in Section \ref{sec:unified analysis} to some cases of interest. In particular, as mentioned earlier, we complete the analysis made in~\cite{ACDR} for corruption to the continuous distribution and to the support of distributions. Moreover, we illustrate how focusing on particular instances enables us to relax assumptions and strengthen the obtained results.
Section~\ref{sec:numerics} contains two numerical examples of Rockafellian relaxation in practice, and Section~\ref{sec:conclusions} finishes with some concluding remarks. 

\section{Notation and preliminaries}\label{sec:notation and preliminaries}

Let $(\rZ,\|\cdot\|_{\rZ})$ be a reflexive Banach space, $(\rU,\|\cdot\|_{\rU})$ be an arbitrary Banach space, and $(\Xi,\cA,\bbP)$ be a probability space whose sample space is equipped with a norm $\nor{\cdot}{\Xi}$. Let $\rZ_{\ad}\subset \rZ$ be a closed convex subset of optimization variables. We consider functions $f_0:\rZ\to \overline\R$, $g:\rU\to \overline\R$, and $s:\Xi\times \rZ\to \rU$, which satisfy the following assumptions.
\begin{assump}[Properties of the solution map $s$]\label{as:on s}\hfill
\begin{enumerate}
\item $s(\cdot,z):\Xi\to \rU$ is $\cA$-measurable, for every $z\in \rZ$.
\item If both $\xi_\veps \to \xi$ in $\Xi$ and $z_\veps \wlim z$ in $\rZ$ as $\veps \downarrow 0$, then 
\begin{equation*}
s(\xi_\veps,z_\veps) \wlim s(\xi,z) \qin \rU\, .
\end{equation*}
\end{enumerate}
\end{assump}
\begin{assump}[Properties of $f_0$ and $g$]\label{as:on f0 and g}\hfill
    \begin{enumerate}
    \item $f_{0}$ is proper: $f_0(z) > -\infty \quad \forall z\in \rZ$ and $f_0(z) < +\infty$ for some $z\in \rZ$.
    \item Both $f_{0}$ and $g$ are sequentially weakly lower semicontinuous; that is, respectively,
    \begin{align*}
        &z_\veps \overset{\rZ}{\rightharpoonup} z \implies \liminf_{\veps \downarrow 0} f_0(z_\veps) \geq f_0(z)\, , \\
        &u_\veps \overset{\rU}{\rightharpoonup} u \implies \liminf_{\veps \downarrow 0} g(z_\veps) \geq g(z)\, . 
    \end{align*}
    \end{enumerate}
\end{assump}

Now, let us introduce some notation that will be used throughout the paper. Given an arbitrary space $X$ and a subset $S \subset X$, we denote by $\imath_S$ the indicator function of $S$, namely, the function $X \to \R$ defined by $\imath_S(x)=0$ for $x \in S$ and $\imath_S(x)=+\infty$ for $x \notin S$. Similarly, let $\chi_{S}$ denote the characteristic function of $S$, that is, the function that takes the value $1$ in $S$, and $0$ otherwise. We also denote by $\rE_{S,0}$ the extension-by-zero operator from $S$ to $X$. More precisely, given a function $f:S\to \overline{\R}$, its extension $\rE_{S,0}(f):X\to \overline{\R}$ coincides with $f$ on $S$, and is identically zero on $X \setminus S$.

We also recall the definition of the positive part function $(\, \cdot\, )_+:\R\to \R$, given by
\begin{equation*}
x_+ \,:=\, \frac{|x|+x}2 \,=\, \max\, \{0,x\}    \qquad \forall x\in \R \, ,
\end{equation*}
which enjoys the following useful lower semicontinuity property: for any sequence of real numbers $(x_{\veps})_{\veps > 0}$ such that $\ds \liminf_{\veps\downarrow0} x_{\veps}\geq x$ for some $x\in \R$, one has
\begin{equation}\label{eq:property of positive part}
\liminf_{\veps\downarrow 0} \; (x_{\veps})_+\geq x_+.
\end{equation}

We now introduce the notions of convergence that we will consider throughout the paper. In the following, $X$ and $Y$ denote Banach spaces, $\phi:X\times Y\to \R$ denotes a map, whereas $(\phi_\veps)_\veps$ is a sequence of maps from $X\times Y$ to $\R$. 
\begin{definition}[Mosco convergence] \label{def:mosco-convergence}
The sequence $(\phi_\veps)_\veps$ Mosco-converges to $\phi$ if the following conditions hold:
\begin{enumerate}
    \item For all $(x,y)\in X\times Y$ there is a sequence $(x_\veps,y_\veps)_\veps$ in $X\times Y$ converging strongly to $(x,y)$ such that
    \begin{equation}\label{eq:limsup condition-mosco}
        \limsup_{\veps\downarrow 0}\, \phi_\veps(x_\veps,y_\veps)\leq \phi(x,y)\,.
    \end{equation}
    \item For all sequences $(x_\veps,y_\veps)_\veps\wlim (x,y)$ in $X\times Y$, there holds 
    \begin{equation}\label{eq:liminf condition-mosco}
        \liminf_{\veps\downarrow 0}\, \phi_\veps(x_\veps,y_\veps)\geq \phi(x,y)\,.
    \end{equation}
\end{enumerate}
These are called limit superior and limit inferior conditions, respectively.
\end{definition}
We also introduce a slight modification of the definition of $\Gamma$-convergence, which we refer to as weak-strong $\Gamma$-convergence. This notion will be particularly useful in our context (cf.~Theorem \eqref{thm:weak-strong-gamma-convergence-result}).
\begin{definition}[Weak-strong $\Gamma$-convergence]
\label{def:Weak-strong-Gamma-convergence}
The sequence $(\phi_{\veps})_{\veps}$ weak-strong $\Gamma$-converges to $\phi$, denoted by $\phi_\veps \overset{\Gamma}{\rightharpoondown} \phi$, if the following conditions hold:
\begin{enumerate}
\item For all $(x,y)\in X\times Y$, there exists a sequence $(x_{\veps},y_\veps)_{\veps>0}$ in $X\times Y$ converging strongly to $(x,y)$ such that 
\begin{equation}\label{eq:limsup condition}
\limsup_{\veps \downarrow 0}\, \phi_{\veps}(x_{\veps},y_\veps)\leq \phi(x,y) \, .
\end{equation}
\item For all sequences $x_\veps\wlim{} x\in X$ and $y_\veps\to y\in Y$, there holds 
\begin{equation}\label{eq:liminf condition}
\liminf_{\varepsilon\downarrow 0} \, \phi_\veps(x_\veps,y_\veps)\geq \phi(x,y) \, .
\end{equation}
\end{enumerate}
\end{definition}

The main difference between this and the usual notion of $\,\Gamma$-convergence lies in the topology under which convergence is understood: here, convergence in $X$ and $Y$ is taken with respect to the weak and strong topologies, respectively, rather than the strong topology on the product space as a whole. This gives rise to a somewhat stronger notion of generalized convergence, which sits strictly between Mosco and Gamma convergence. As a consequence, and as expected, the two notions are generally not equivalent.

These notions of convergence are naturally suited to the optimization framework. Indeed, control sequences typically exhibit weak convergence. The following proposition provides a concrete illustration of these concepts.
\begin{prop}\label{prop:ws Gamma convergence preserves minimizing sequences}
Let $(x_\veps^\star,y_\veps^\star)_\veps$ be a sequence in $X\times Y$ with $x_\veps^\star\in \operatorname{argmin}\phi_\veps(x)$. Assume that the sequence $(\phi_\veps)_{\veps}$ converges to $\phi$ in one of the notions defined above, and that $(x_\veps^\star,y_\veps^\star)_\veps$ converges to $(x^\star,y^\star)\in X\times Y$ with respect to the corresponding topologies. Then, $x^\star\in\on{argmin}\phi(x)$.
\end{prop}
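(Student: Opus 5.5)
The plan is to read the statement in its natural reduced form. Write $\psi_\veps(x):=\inf_{y\in Y}\phi_\veps(x,y)$ and $\psi(x):=\inf_{y\in Y}\phi(x,y)$. The hypothesis $x_\veps^\star\in\on{argmin}\phi_\veps(x)$ is read as saying that $x_\veps^\star$ minimizes the reduced functional $\psi_\veps$ over $X$, with $y_\veps^\star$ a companion variable realizing the inner infimum, i.e.\ $\phi_\veps(x_\veps^\star,y_\veps^\star)=\psi_\veps(x_\veps^\star)$. The goal is then $x^\star\in\on{argmin}\psi$, and, as a by-product, that $y^\star$ realizes $\psi(x^\star)$. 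This is the setting in the paper, where the Rockafellian variable $y$ is optimized out and the control $x$ is the object of interest. I treat both notions (Mosco and weak-strong $\Gamma$) at once. Only two facts are used, and both hold for either notion: the limit inferior condition along the given convergent sequence $(x_\veps^\star,y_\veps^\star)\to(x^\star,y^\star)$, and the existence of strongly convergent recovery sequences.

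\emph{Step 1 (lower bound).} Since $(x_\veps^\star,y_\veps^\star)$ converges to $(x^\star,y^\star)$ in the topology matching the chosen notion, the limit inferior condition \eqref{eq:liminf condition-mosco} (respectively \eqref{eq:liminf condition}) gives
\begin{equation*}
\psi(x^\star)\le\phi(x^\star,y^\star)\le\liminf_{\veps\downarrow0}\phi_\veps(x_\veps^\star,y_\veps^\star)=\liminf_{\veps\downarrow0}\psi_\veps(x_\veps^\star).
\end{equation*}

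\emph{Step 2 (upper bound).} Fix an arbitrary $(x,y)\in X\times Y$ and take the recovery sequence $(x_\veps,y_\veps)\to(x,y)$ strongly from \eqref{eq:limsup condition-mosco} (respectively \eqref{eq:limsup condition}). Minimality of $x_\veps^\star$ for $\psi_\veps$ gives
\begin{equation*}
\psi_\veps(x_\veps^\star)\le\psi_\veps(x_\veps)\le\phi_\veps(x_\veps,y_\veps).
\end{equation*}
Taking $\limsup$ and combining with Step 1 yields $\phi(x^\star,y^\star)\le\phi(x,y)$. Taking the infimum over $y$ gives $\psi(x^\star)\le\phi(x^\star,y^\star)\le\psi(x)$ for every $x\in X$. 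Hence $x^\star\in\on{argmin}\psi$. Choosing $x=x^\star$ also shows $\phi(x^\star,y^\star)=\psi(x^\star)$, so $y^\star$ realizes the inner infimum.

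\emph{Main obstacle.} The delicate point is the interpretation of ``$\on{argmin}\phi_\veps(x)$'' when $y$ is held frozen, i.e.\ $x_\veps^\star\in\on{argmin}\phi_\veps(\cdot,y_\veps^\star)$. In that reading one would like to compare with $\phi_\veps(x_\veps,y_\veps^\star)$, but the recovery sequence provides its own $y_\veps\to y^\star$, not $y_\veps^\star$. I would handle this by passing through $\psi_\veps$ as above, which is exactly why the reduced formulation is used. If a genuinely frozen-$y$ statement were required, the extra ingredient would be an equi-continuity of $\phi_\veps$ in $y$, allowing one to swap $y_\veps$ for $y_\veps^\star$ at vanishing cost; I expect this not to be needed for the intended use. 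I also note that no coercivity is required, because convergence of $(x_\veps^\star,y_\veps^\star)$ is assumed rather than derived.
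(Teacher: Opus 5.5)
Your proof is correct and is essentially the paper's proof. Your reduced-functional reading ($x_\veps^\star$ minimizes $\psi_\veps$ and $y_\veps^\star$ attains the inner infimum) is equivalent to the joint minimality $\phi_\veps(x_\veps^\star,y_\veps^\star)\le\phi_\veps(x,y)$ for all $(x,y)$ that the paper uses, and your Steps 1--2 reproduce the paper's chain $\phi(x^\star,y^\star)\le\liminf_\veps\phi_\veps(x_\veps^\star,y_\veps^\star)\le\limsup_\veps\phi_\veps(x_\veps,y_\veps)\le\phi(x,y)$ along a recovery sequence, so you reach the same conclusion that $(x^\star,y^\star)$ minimizes $\phi$ jointly. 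You are also right that a frozen-$y$ reading would need an extra ingredient; the paper implicitly adopts the joint reading.
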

\begin{proof}
    Given $(x,y)\in X\times Y$, there holds $\phi_\veps(x_\veps^\star,y_\veps^\star)\leq \phi_\veps(x,y)$. Then, for any sequence $(x_\veps,y_\veps)_{\veps}$ satisfying the limit superior condition associated with $(x,y)$, we obtain
    $$
    \phi(x^\star,y^\star)\leq \liminf_\veps\,  \phi_\veps(x_\veps^\star,y_\veps^\star)
    \leq \liminf_\veps\,  \phi_\veps(x_\veps,y_\veps)\leq \limsup_\veps \, \phi_\veps(x_\veps,y_\veps)\leq \phi(x,y)\,.
    $$
    This completes the proof.
\end{proof}

To conclude this section, we recall the definition of a Rockafellian associated with an optimization problem (cf.~\cite{Rockafellar1963,Rockafellar1970}).

\begin{definition}[Rockafellian]
    For Banach spaces $X$ and $Y$, a function $\varphi:X \to \overline{\R}$, and a generic optimization problem $\min_{x \in X} \varphi(x)$, a bivariate function $\Phi: X \times Y \to \overline{\R}$ is called a Rockafellian for the problem, anchored at $\overline{y} \in Y$, if
    \begin{equation*}
    \Phi(x, \overline{y}) = \varphi(x) \qquad \forall x \in X \, .
    \end{equation*}
\end{definition}

Note that Rockafellians are not unique; for a given optimization problem, there exist infinitely many Rockafellians associated with it. This non-uniqueness provides significant flexibility, allowing one to design Rockafellians that can better accommodate approximations, perturbations, or numerical regularizations than the original problem. In particular, by carefully choosing the Rockafellian, one can simplify the analysis of convergence, stability, or sensitivity of approximate solutions.

\section{The uncorrupted problem}\label{sec:uncorrupted problems}

In this section, we study the optimization problem
\begin{equation}\label{eq:augmented problem}
\min_{(z,\gamma)\in \rZ_{\ad}\times \R}\varphi (z,\gamma)= f_{0}(z)+\gamma+\kappa\, \bbE\big[\big((g\circ s)(\cdot,z)-\gamma\big)_{+}\big]\, ,
\end{equation}
together with a slight variant incorporating a smoothing of the positive-part function. Moreover, we introduce the corresponding Rockafellian formulations for each problem, thereby setting the stage for the corruption framework developed in Section~\ref{sec:unified analysis}.

\subsection{The original optimization problem}\label{subsec:original optimization problem}

Our first result establishes the existence of a solution to~\eqref{eq:augmented problem}.

\begin{prop}\label{prop:existence-of-minimizer-original-problem}
If $\rZ_\ad$ is bounded in $\rZ$ or $f_0$ is coercive, then the problem \eqref{eq:augmented problem} has a minimizer.
\end{prop}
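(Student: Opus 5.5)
The plan is the direct method of the calculus of variations on $\rZ_\ad\times\R$. I assume, as is implicit in \eqref{eq:augmented problem}, that $\kappa>1$ (for instance $\kappa=(1-\beta)^{-1}$). I also assume that the infimum $m^\star$ of $\varphi$ over $\rZ_\ad\times\R$ is finite, i.e.\ some admissible pair gives a finite value.

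\emph{Measurability.} Since $g$ is sequentially weakly lower semicontinuous, it is strongly lower semicontinuous, hence Borel. Combined with Assumption~\ref{as:on s}(1), this makes $\xi\mapsto (g\circ s)(\xi,z)$ measurable. The integrand $((g\circ s)(\cdot,z)-\gamma)_+$ is nonnegative, so the expectation is well defined in $[0,+\infty]$.

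\emph{Compactness of a minimizing sequence.} Take $(z_n,\gamma_n)\in\rZ_\ad\times\R$ with $\varphi(z_n,\gamma_n)\to m^\star$.

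First, $(z_n)$ is bounded in $\rZ$. This holds trivially if $\rZ_\ad$ is bounded. If instead $f_0$ is coercive, I argue as follows. On a bounded set, $f_0$ is bounded below, because $f_0$ is weakly lsc and bounded, weakly closed sets in the reflexive space $\rZ$ are weakly compact. Coercivity therefore gives a global lower bound $f_0\ge c_0$. The inequality $\varphi(z_n,\gamma_n)\ge f_0(z_n)+\gamma_n$ then bounds $\gamma_n$ above by $m^\star+1-c_0$. The lower bound on $\gamma_n$ (next paragraph) then bounds $f_0(z_n)$ above, and coercivity bounds $(z_n)$.

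Second, $(\gamma_n)$ is bounded, and this is the main obstacle.
\begin{itemize}
\item The upper bound follows from $\varphi\ge f_0(z)+\gamma$, using $(\cdot)_+\ge 0$ and the lower bound on $f_0$ over the bounded set containing $(z_n)$.
\item The lower bound needs a floor on $g\circ s$. I would try to use a lower bound $g\ge m$ on $\rU$, or at least $\inf_n\bbE[(g\circ s)(\cdot,z_n)]>-\infty$. With $g\ge m$, for $\gamma_n<m$,
\begin{equation*}
\gamma_n+\kappa\,\bbE\big[((g\circ s)(\cdot,z_n)-\gamma_n)_+\big]\;\ge\;\gamma_n+\kappa(m-\gamma_n)\;=\;\kappa m-(\kappa-1)\gamma_n .
\end{equation*}
Since $\kappa>1$, this tends to $+\infty$ as $\gamma_n\to-\infty$, which contradicts minimality. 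Alternatively, Jensen's inequality $\bbE[(G-\gamma)_+]\ge(\bbE G-\gamma)_+$ gives the same conclusion under the integrability condition.
\end{itemize}
If the paper's intended setting lacks such a floor, this is the step where an extra (standard) hypothesis must be invoked.

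With both sequences bounded, reflexivity lets us pass to a subsequence with $z_n\wlim z^\star$ and $\gamma_n\to\gamma^\star$. Moreover $z^\star\in\rZ_\ad$, since closed convex sets are weakly closed.

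\emph{Lower semicontinuity.} By Assumption~\ref{as:on f0 and g}, $\liminf f_0(z_n)\ge f_0(z^\star)$. For each fixed $\xi$, apply Assumption~\ref{as:on s}(2) with the constant sequence $\xi_n=\xi$ to get $s(\xi,z_n)\wlim s(\xi,z^\star)$ in $\rU$. Weak lsc of $g$ then gives $\liminf_n (g\circ s)(\xi,z_n)-\gamma_n\ge (g\circ s)(\xi,z^\star)-\gamma^\star$. Property \eqref{eq:property of positive part} transfers this inequality to the positive parts, and Fatou's lemma (the integrands are nonnegative) yields
\begin{equation*}
\liminf_n \bbE\big[((g\circ s)(\cdot,z_n)-\gamma_n)_+\big]\ge \bbE\big[((g\circ s)(\cdot,z^\star)-\gamma^\star)_+\big].
\end{equation*}
Summing the three liminf inequalities gives $\varphi(z^\star,\gamma^\star)\le m^\star$. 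Hence $(z^\star,\gamma^\star)$ is a minimizer.
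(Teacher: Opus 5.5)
Your lower-semicontinuity step is exactly the paper's: the pointwise liminf from Assumptions~\ref{as:on s} and~\ref{as:on f0 and g}, then \eqref{eq:property of positive part} and Fatou. The overall skeleton is also the same. The difference is that the paper merely asserts that $\varphi+\imath_{\rZ_\ad}$ is coercive and cites the Weierstrass theorem, whereas you try to prove coercivity in $\gamma$. You are right that this is where the content lies.

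Your assumption $\kappa>1$ is needed. The paper leaves $\kappa$ unspecified, and for $g\equiv 0$ and $\kappa<1$ one has $\varphi(z,\gamma)=f_0(z)+(1-\kappa)\gamma$ for $\gamma<0$, which is unbounded below. When $f_0$ is coercive, a floor on $g\circ s$ is also genuinely necessary. Take $\rZ=\rU=\rZ_\ad=\R$, $s(\xi,z)=z$, $g(u)=-|u|^3$ and $f_0(z)=z^2$. All hypotheses of the proposition hold, yet $\varphi(z,-|z|^3)=z^2-|z|^3\to-\infty$. So in that case your extra hypothesis corrects the statement rather than weakening your proof.

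When $\rZ_\ad$ is bounded, however, no floor is needed (for real-valued $g$ and $\kappa>1$), and as written your argument leaves this case unproved. The missing idea is to extract $z_n\wlim z^\star$ \emph{before} bounding $\gamma_n$. You can then exploit the pointwise inequality that you only use at the end.

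Set $G_n:=(g\circ s)(\cdot,z_n)$. Then $\liminf_n G_n\ge (g\circ s)(\cdot,z^\star)>-\infty$ a.s., so $H_n:=\inf_{k\ge n}G_k$ increases a.s.\ to a limit that is $>-\infty$. Fix $c\in(0,1)$ with $\kappa(1-c)>1$. There are then $M>0$ and $n_0$ such that
\[
\bbP(G_k\le -M)\le\bbP(H_{n_0}\le -M)\le c \quad\text{for all } k\ge n_0 .
\]
For $\gamma_k<-M$ this yields $\bbE[(G_k-\gamma_k)_+]\ge(1-c)(-M-\gamma_k)$, and therefore
\[
\varphi(z_k,\gamma_k)\;\ge\;\inf_{\rZ_\ad}f_0+\big(\kappa(1-c)-1\big)\,|\gamma_k|-\kappa(1-c)M .
\]
Here $\inf_{\rZ_\ad}f_0$ is finite, as you showed. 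This rules out $\gamma_k\to-\infty$ along a minimizing sequence.

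With this argument in place of the floor, the rest of your proof goes through unchanged. It also shows $m^\star>-\infty$ instead of assuming it. If $m^\star=+\infty$, every admissible point is trivially a minimizer.
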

\begin{proof}
We first prove that $\rZ\times \R\ni(z,\gamma)\mapsto \bbE\big[\big((g\circ s)(\cdot,z)-\gamma\big)_{+}\big]$ is weakly lower semicontinuous. Let $(z_n,\gamma_n)\in \rZ \times \R$ be a sequence such that $z_n\wlim z$ in $\rZ$ and $\gamma_n\wlim \gamma$ in $\R$. Notice that since $\R$ is finite-dimensional, the latter means that $\gamma_n\to \gamma$ strongly. Now, using Assumptions~\ref{as:on s} and~\ref{as:on f0 and g}, we have that $\liminf_{n\to \infty}\big\{ (g\circ s)(\xi,z_n)-\gamma_n \big\} \geq (g\circ s)(\xi,z)-\gamma$ for a.e. $\xi \in \Xi$. Then, by~\eqref{eq:property of positive part}, we get 
\begin{equation*}
    \liminf_{n\to \infty}\big( (g\circ s)(\xi,z_n)-\gamma_n \big)_+ \geq \big( (g\circ s)(\xi,z)-\gamma \big)_+ \quad \text{for a.e.}~~\xi \in \Xi\, .
\end{equation*}
An straightforward application of Fatou's lemma implies that
\begin{equation*}
    \liminf_{n\to \infty}\bbE\big[\big( (g\circ s)(\cdot,z_n)-\gamma_n \big)_+\big] \geq \bbE\big[\big( (g\circ s)(\cdot,z)-\gamma \big)_+\big]\, ,
\end{equation*}
which means that $(z,\gamma)\mapsto \bbE\big[\big((g\circ s)(\cdot,z)-\gamma\big)_{+}\big]$ is weakly lower semicontinuous, as desired. Since $f_0$ is weakly lower semicontinuous as well (cf. Assumption~\ref{as:on f0 and g}), we conclude that $\varphi$ is weakly lower semicontinuous. Moreover, $\rZ\times \R\ni (z,\gamma)\mapsto\varphi(z,\gamma)+\imath_{\rZ_\ad}(z)$ is coercive provided that $\rZ_{\ad}$ is bounded or $f_0$ is coercive. Thus, by applying~\cite[Theorem 3.2.5]{ABM}, it follows that $\varphi$ admits a minimizer in $\rZ_\ad\times \R$, i.e., problem \eqref{eq:augmented problem} has a solution.
\end{proof}

Ahead of passing to a corrupted Rockafellian framework and intending to maintain the notation as clear as possible, we introduce a slight variation of the so-called reduced objective function. Specifically, we define $\sJ$ as the function that assigns to each $(z, \gamma) \in \rZ\times \R$ the random variable
\begin{equation*}
\sJ(z,\gamma):= g(s(\cdot,z))-\gamma\, .
\end{equation*}
For $\ps:\Xi\to\Xi$, we denote $\sJ(z,\gamma;\ps)=\sJ(z,\gamma)\circ \ps$ and $\sJ_{+}(z,\gamma;\bt)=(\sJ(z,\gamma;\bt))_{+}$. We further define $f:\rZ\times \R\to \overline{\R}$ as 
\begin{equation*}
f(z,\gamma):=f_{0}(z)+\gamma \, .
\end{equation*}
Under this notation, the objective functional may be rewritten as
\begin{equation*}
\varphi(z,\gamma)=f(z,\gamma)+\kappa\, \bbE[\sJ_{+}(z,\gamma)]\, .
\end{equation*}
Furthermore, we introduce a suitable change of measure to ease the task of formalizing corruption to the continuous probability distribution:
\begin{assump}[Change of measure]\label{as:on P}\hfill
\begin{enumerate}
\item There exists another $\sigma$-finite measure $\mu$ defined on $(\Xi,\cA)$.
\item $\bbP$ is absolutely continuous with respect to $\mu$.
\end{enumerate}
\end{assump}
From now on, we assume that $\Xi$ carries the measure $\mu$. Accordingly, all measure-related properties are to be interpreted with respect to $\mu$ instead of $\bbP$.  We let $\rho$ denote the Radon--Nikodym derivative $d\bbP/d\mu$ and $\cI$ denote the integral operator $X\mapsto \int_{\Xi}X\,d\mu$. In addition, given $\pd:\Xi\to \R$ and $\ps:\Xi\to\Xi$, we denote the translations $\wp(\pd)=\rho+\pd$ and $\om(\ps)=I+\ps$, where $I:\Xi\to \Xi$ denotes the identity map. In particular, this enables us to rewrite the objective functional as
\begin{equation*}
\varphi(z,\gamma)=f(z,\gamma)+\kappa\, \cI\brk{\sJ_{+}(z,\gamma)\, \rho}=f(z,\gamma)+\kappa\, \cI\brk{\sJ_{+}\big(z,\gamma;\om(0)\big)\, \wp(0)}.
\end{equation*}
The notation on the right-hand side of the last equality is suggestive of the form that the Rockafellian will take. In fact, for $q\in (1,+\infty)$ and $q'\in (1,+\infty)$, we introduce the reflexive Banach spaces 
\begin{equation}\label{eq:def of T and bT}
\rT:=\rL^{q}(\Xi)\qan \bT:=\rL^{q'}(\Xi;\Xi)\, ,
\end{equation}
and define the announced Rockafellian functional $\Phi :\rZ\times \R\times \rT\times\bT\to \overline \R$ by
\begin{equation}\label{eq:Rockafellian}
\Phi(z,\gamma;\pd,\ps):= \begin{cases}
f(z,\gamma) +\kappa\, \cI\brk{\sJ_{+}\big(z,\gamma;\om(\ps)\big)\wp(\pd)},&\text{ if } (\pd,\ps)=(0,0)\text{ a.s.}\, ,\\
+\infty &\text{ otherwise}\, ,
\end{cases}
\end{equation}
which is clearly anchored at $(\pd,\ps)=(0,0)$. Thus, the Rockafellian problem associated with~\eqref{eq:augmented problem} is
\begin{equation}\label{eq:Rockafellian problem}
\min\Big\{ \Phi(z,\gamma;\pd,\ps)~:~ (z,\gamma,\pd,\ps)\in \rZ_{\ad}\times \R\times \rT \times \bT\Big\} \, .
\end{equation}

\begin{prop}\label{prop:existence-of-minimizer-Rockafellian-original-problem}
If $\rZ_\ad$ is bounded in $\rZ$ or $f_0$ is coercive, then the minimization problem \eqref{eq:Rockafellian problem} has at least one solution.
\end{prop}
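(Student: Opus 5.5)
The plan is to reduce the Rockafellian problem \eqref{eq:Rockafellian problem} directly to the original problem \eqref{eq:augmented problem}, whose solvability is already given by Proposition~\ref{prop:existence-of-minimizer-original-problem}. The key observation is that, by the definition \eqref{eq:Rockafellian}, $\Phi(z,\gamma;\pd,\ps)=+\infty$ unless $(\pd,\ps)=(0,0)$ a.s. Moreover, $\om(0)=I$ and $\wp(0)=\rho$, so on the anchor we have
\begin{equation*}
\Phi(z,\gamma;0,0)=f(z,\gamma)+\kappa\,\cI\brk{\sJ_{+}(z,\gamma)\,\rho}=\varphi(z,\gamma)\qquad \forall (z,\gamma)\in\rZ\times\R\, .
\end{equation*}
This identity is exactly the rewriting of $\varphi$ recorded just before \eqref{eq:def of T and bT}.

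The steps, in order, are as follows. First, invoke Proposition~\ref{prop:existence-of-minimizer-original-problem}: under the hypothesis that $\rZ_\ad$ is bounded or $f_0$ is coercive, it gives a minimizer $(z^\star,\gamma^\star)\in\rZ_\ad\times\R$ of $\varphi$. Second, show that $(z^\star,\gamma^\star;0,0)$ minimizes $\Phi$ over $\rZ_\ad\times\R\times\rT\times\bT$. To do so, take any admissible $(z,\gamma,\pd,\ps)$ and split into two cases. If $(\pd,\ps)\neq(0,0)$ on a set of positive $\mu$-measure, then $\Phi=+\infty\ge \Phi(z^\star,\gamma^\star;0,0)$. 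Otherwise $(\pd,\ps)=(0,0)$ a.s., so that $\Phi(z,\gamma;\pd,\ps)=\varphi(z,\gamma)\ge\varphi(z^\star,\gamma^\star)=\Phi(z^\star,\gamma^\star;0,0)$. Third, observe that $(0,0)$ indeed belongs to $\rT\times\bT=\rL^{q}(\Xi)\times\rL^{q'}(\Xi;\Xi)$, so the candidate is admissible.

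There is one point to check, and it is the only potential obstacle. In the a.s.-zero case one must make sure that $\Phi$ depends on $(\pd,\ps)$ only through its $\mu$-equivalence class, so that it genuinely equals $\varphi(z,\gamma)$. For $\wp(\pd)=\rho+\pd$ this is immediate, since the integrand agrees $\mu$-a.e. For $\ps$ the composition $\sJ_+(z,\gamma)\circ(I+\ps)$ is involved, but it coincides with $\sJ_+(z,\gamma)$ wherever $\ps=0$, hence $\mu$-a.e. The integrals therefore agree, and this works because integration is against $\mu$ and $\bbP\ll\mu$ by Assumption~\ref{as:on P}.

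Finally, I would note that the minimum value is finite. By Assumption~\ref{as:on f0 and g}, $f_0$ is proper, and the expectation term is nonnegative. This rules out the degenerate case $\Phi\equiv+\infty$, although existence of a minimizer does not strictly require it. Alternatively, one could run the direct method on $\Phi$ itself, checking weak lower semicontinuity and coercivity in $(\pd,\ps)$. The reduction above avoids this entirely, since the indicator-type structure of $\Phi$ makes it unnecessary.
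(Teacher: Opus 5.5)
Your proof is correct and follows the paper's argument essentially verbatim: take the minimizer $(z^\star,\gamma^\star)$ of $\varphi$ from Proposition~\ref{prop:existence-of-minimizer-original-problem}, note that $\Phi(\cdot,\cdot;0,0)=\varphi$ and $\Phi=+\infty$ off the anchor, and conclude that $(z^\star,\gamma^\star;0,0)$ minimizes $\Phi$; your extra check that $\Phi$ depends on $(\pd,\ps)$ only through $\mu$-a.e.\ classes is a harmless refinement that the paper leaves implicit. One small caveat concerns your closing aside: properness of $f_0$ on $\rZ$ does not by itself make $\varphi$ finite somewhere on $\rZ_\ad\times\R$, since that would need $f_0(z)<+\infty$ and a finite expectation term at some $z\in\rZ_\ad$, but, as you say, this is not needed for existence.
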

\begin{proof}
Let $(z^\star,\gamma^\star)\in \rZ_\ad \times \R$ be a solution of \eqref{eq:augmented problem}, whose existence is guaranteed by Proposition~\ref{prop:existence-of-minimizer-original-problem}. Since $\Phi(z,\gamma;\pd,\ps)=+\infty$ whenever $(\pd,\ps)\neq(0,0)$, it follows that
\begin{equation*}
    \Phi(z^\star,\gamma^\star;0,0) =\varphi(z^\star,\gamma^\star) \leq \varphi(z,\gamma) = \Phi(z,\gamma;0,0) \leq \Phi(z,\gamma;\pd,\ps) \qquad \forall (z,\gamma)\in \rZ_{\ad}\times \R\, ,
\end{equation*}
which means that $(z^\star,\gamma^\star;0,0)$ is a solution to~\eqref{eq:Rockafellian problem}.
\end{proof}

\subsection{The \texorpdfstring{$\delta$}{delta}-smoothed uncorrupted problem}\label{subsec:delta-smoothed-uncorrupted-problem}

The non-smoothness of the positive part function introduces non-differentiability in the integral term of \eqref{eq:augmented problem}, which poses significant challenges when deriving optimality conditions---particularly in nonconvex settings, where certain regularity assumptions on the functional are essential. To address this issue, we introduce a smoothing of the positive part function, which ensures enhanced regularity of the integral term, namely Fréchet differentiability. This smoothing strategy can be traced back, for instance, to the work of Chen and Mangasarian in~\cite{CM}. Our approach can be seen as a generalization of the smoothed primal formulation studied by Kouri and Surowiec in~\cite{KS}, as both the mapping $g$ to the quantity-of-interest and the solution operator $s$ considered here are generic. In this way, we consider a function $\zeta:\R\to \R$ satisfying the following assumption.

\begin{assump}[Properties of $\zeta$]\label{as:smoothing of the positive part} \hfill 
\begin{enumerate}[label =(\roman*)]
\item $\zeta$ is continuous and bounded on the real line.\label{as:smoothing 1}
\item $\zeta$ is nonnegative and $\int_{\R} \zeta(x)\, dx=1$.\label{as:smoothing 2}
\item $\int_{\R}\zeta(x)|x|\,dx<+\infty$.\label{as:smoothing 3}
\item Either $\int_{\R}\zeta(x)x\,dx\leq 0$ or $\int_{-\infty}^{0}\zeta(x)|x|\,dx=0$.\label{as:smoothing 4}
\item The set $\set{x\in\R~:~\zeta(x)>0}$ is connected.\label{as:smoothing 5}
\end{enumerate}
\end{assump}

Under this assumption, we define a net of smoothings of the positive part $(\, \cdot\,)_{+,\delta}:\R\to\R$, given by 
\begin{equation}\label{eq:definition of smoothing}
(x)_{+,\delta}:=\int_{-\infty}^{x} A_{\delta}(\tau) \, d\tau \, , \quad \text{where}\quad A_{\delta}(\tau):=\int_{-\infty}^{\tau}\frac{1}{\delta} \, \zeta\left(\frac \sigma\delta \right)\,d\sigma \, ,
\end{equation}
for all $\delta > 0$. The smoothing proposed by Chen and Mangasarian in~\cite{CM} serves as an example of this, since it can be understood as the one obtained by considering $\zeta(x)=e^{- x}\, (1+e^{- x})^{-2}$, which certainly satisfies our assumptions.

Through a direct computation, one verifies that $(\, \cdot\,)_{+,\delta}$ is at least twice continuously differentiable, with derivatives
\begin{equation}\label{eq:derivatives of smoothing}
    (x)_{+,\delta}' = A_{\delta}(x) \qan (x)''_{+,\delta} = \frac{1}{\delta}\, \zeta\left(\frac{x}{\delta}\right) \qquad \forall x\in \R \, .
\end{equation}

Now, we collect some important facts about the smoothing for later use.
\begin{lemma}\label{lem:props of smoothing}
Consider $\zeta$ satisfying Assumption \ref{as:smoothing of the positive part}. Then, the following statements hold:
\begin{enumerate}[label = (\roman*)]
\item $\set{(\,\cdot\, )_{+,\delta}}_\delta$ converges uniformly to $(\,\cdot\,)_+$ as $\delta \downarrow 0$, with linear order.\label{lem:smoothing 1} 
\item Given $\delta>0$, either $(\,\cdot\,)_{+,\delta}\leq (\, \cdot\, )_+$ or $(\,\cdot\,)_{+}\leq (\,\cdot\,)_{+,\delta}$. \label{lem:smoothing 2}
\item Let $(x_\delta)_\delta$ be a sequence of real numbers whose limit inferior is bounded below by $x\in \R$. Then
$$
\liminf_\delta \; (x_\delta)_{+,\delta}\geq x_+\, .
$$
\label{lem:smoothing 2.5}
\item For all $\delta>0$, $(\,\cdot\,)_{+,\delta}$ is Lipschitz continuous, with uniform Lipschitz constant~$1$. \label{lem:smoothing 3}
\end{enumerate}
\end{lemma}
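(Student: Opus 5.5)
The whole argument rests on one representation of the smoothing as a convolution of the positive part with a mollifier. Set $\zeta_\delta(\sigma)=\delta^{-1}\zeta(\sigma/\delta)$. Two integrations by parts in \eqref{eq:definition of smoothing}, together with Fubini for nonnegative integrands, should give
\begin{equation*}
(x)_{+,\delta}=\int_{-\infty}^{x}(x-\sigma)\,\zeta_\delta(\sigma)\,d\sigma=\int_{\R}(x-\sigma)_+\,\zeta_\delta(\sigma)\,d\sigma=\int_{\R}(x-\delta y)_+\,\zeta(y)\,dy\, .
\end{equation*}
This is legitimate because $A_\delta(\tau)=\int_{-\infty}^{\tau/\delta}\zeta$ is integrable at $-\infty$ by Assumption~\ref{as:smoothing of the positive part}\ref{as:smoothing 3}. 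The identity is the only computational step; everything else follows from it.

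\textbf{Items \ref{lem:smoothing 3} and \ref{lem:smoothing 1}.} For \ref{lem:smoothing 3}, the derivative is $A_\delta(x)\in[0,1]$, because $\zeta\ge 0$ has unit mass. Equivalently, the map $x\mapsto (x-\delta y)_+$ is $1$-Lipschitz and $\zeta$ is a probability density. For \ref{lem:smoothing 1}, the $1$-Lipschitz property of $(\cdot)_+$ gives
\begin{equation*}
|(x)_{+,\delta}-x_+|\le \int_{\R}|(x-\delta y)_+-x_+|\,\zeta(y)\,dy\le \delta\int_{\R}|y|\,\zeta(y)\,dy=:C\delta ,
\end{equation*}
which is uniform in $x$ and linear in $\delta$.

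\textbf{Item \ref{lem:smoothing 2}.} I split according to Assumption~\ref{as:smoothing of the positive part}\ref{as:smoothing 4}.
\begin{itemize}
\item If $m:=\int\zeta(y)\,y\,dy\le 0$: since $(\cdot)_+$ is convex, Jensen's inequality gives $(x)_{+,\delta}\ge (x-\delta m)_+\ge x_+$, because $(\cdot)_+$ is nondecreasing and $-\delta m\ge0$.
\item If $\int_{-\infty}^0\zeta(y)|y|\,dy=0$: then $\zeta$ vanishes a.e.\ on $(-\infty,0)$, so $\zeta(y)>0$ only for $y\ge 0$. For such $y$ we have $(x-\delta y)_+\le x_+$, and integrating gives $(x)_{+,\delta}\le x_+$.
\end{itemize}
Items \ref{as:smoothing 1} and \ref{as:smoothing 5} of the assumption are not needed here; they presumably serve later arguments.

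\textbf{Item \ref{lem:smoothing 2.5}.} Using \ref{lem:smoothing 1},
\begin{equation*}
(x_\delta)_{+,\delta}\ge (x_\delta)_+-C\delta ,
\end{equation*}
so $\liminf_\delta (x_\delta)_{+,\delta}\ge \liminf_\delta (x_\delta)_+\ge x_+$ by \eqref{eq:property of positive part}.

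The main obstacle is a careful justification of the convolution identity: the interchange of integrals and the boundary terms at $-\infty$. I would handle this with Tonelli's theorem, since all integrands are nonnegative, which also shows directly that $(x)_{+,\delta}<\infty$ under \ref{as:smoothing 3}.
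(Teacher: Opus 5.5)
Your proof is correct. For (i) and (ii) it takes a different route from the paper; (iii) and (iv) are argued as in the paper, via the uniform lower bound combined with \eqref{eq:property of positive part}, and via $0\le A_\delta\le 1$.

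The paper integrates by parts once to get \eqref{eq:id for bounds of smoothing}, which is your identity $\int_{-\infty}^x(x-\sigma)\frac1\delta\zeta(\sigma/\delta)\,d\sigma$ in disguise. It then splits by the sign of $x$ to obtain the asymmetric estimate \eqref{eq:Delta-bound-smoothing-of-positive-part}, namely $-\delta\Delta_1\le (x)_{+,\delta}-x_+\le \delta\Delta_2$, with $\Delta_1=\max\{\int_{\R}\sigma\zeta,0\}$ and $\Delta_2=\int_{-\infty}^0|\sigma|\zeta$. From this single estimate it reads off (i) and (ii) together, since each alternative in Assumption~\ref{as:smoothing of the positive part}\ref{as:smoothing 4} makes exactly one of the two constants vanish.

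You instead read the identity as an average $\int_{\R}(x-\delta y)_+\zeta(y)\,dy$ of $1$-Lipschitz, convex, nondecreasing functions. This gives (i) with the cruder symmetric constant $\int_{\R}|y|\zeta$, so you must prove (ii) separately: by Jensen in one case and by one-sided support in the other.

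Your route buys brevity and a structural explanation of why each alternative forces its ordering. It also avoids the paper's most delicate step, the lower bound for $x>0$. There $(A_\delta(x)-1)x=-x\int_x^\infty\frac1\delta\zeta(\tau/\delta)\,d\tau$ must be compared with the tail $\int_x^\infty\frac\tau\delta\zeta(\tau/\delta)\,d\tau$; the printed integration range $(-\infty,x)$ should be $(x,\infty)$.

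Your representation also recovers the paper's sharp constants in one line each:
\begin{itemize}
\item $(x-\delta y)_+\le x_++\delta\max\{-y,0\}$ integrates to the upper bound with $\Delta_2$;
\item your Jensen step gives $(x)_{+,\delta}\ge (x-\delta m)_+\ge x_+-\delta\max\{m,0\}$, the lower bound with $\Delta_1$.
\end{itemize}
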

\begin{proof}
To prove the first assertion, we first establish upper and lower bounds for the difference $(x)_{+,\delta} - (x)_{+}$. Given $x\in \R$, integrating  by parts \eqref{eq:definition of smoothing}, we have
\begin{equation}\label{eq:id for bounds of smoothing}
(x)_{+,\delta}=-\int_{-\infty}^{x}A_{\delta}'(\tau)\, \tau\,d\tau+A_{\delta}(x)\, x \, .
\end{equation}
By definition of $A_{\delta}$  and \eqref{eq:derivatives of smoothing}, we get
$$
(x)_{+,\delta}-(x)_{+}\leq -\delta\int_{-\infty}^{x/\delta}\sigma\, \zeta(\sigma)\,d\sigma\leq \delta\,  \Delta_{2} \, ,
$$
with $\Delta_{2}=\int_{-\infty}^{0}|\sigma|\zeta(\sigma)\,d\sigma$.  Similarly, when $x>0$, from \eqref{eq:id for bounds of smoothing}, we obtain
\begin{align*}
(x)_{+,\delta}-(x)_{+} &=-\int_{-\infty}^{x}\frac \tau\delta\, \zeta(\tau/\delta)\,d\tau+(A_{\delta}(x)-1)\, x\\
&\geq -\int_{-\infty}^{x}\frac \tau\delta\, \zeta(\tau/\delta)\,d\tau -\int_{-\infty}^{x}\frac x\delta \, \zeta(\tau/\delta)\,d\tau  \geq -\delta\int_{\R}\sigma\, \zeta(\sigma)\,d\sigma \, ,
\end{align*}
where in the last inequality we have used that $x>0$ together with the natural change of variables. When $x\leq 0$, by definition, there holds
$$
(x)_{+,\delta}-(x)_{+}\geq 0 \, .
$$
The last two inequalities give $
(x)_{+,\delta}-(x)_{+}\geq -\delta\, \Delta_{1},$
with $\Delta_{1}=\max\set{ \int_{\R}\sigma\zeta(\sigma)\,d\sigma,0 }$, thereby proving the estimate
\begin{equation}\label{eq:Delta-bound-smoothing-of-positive-part}
-\delta\, \Delta_{1}\leq (x)_{+,\delta}-(x)_{+}\leq \delta \, \Delta_{2}\, ,
\end{equation}
which, in turn, provides a uniform bound on the absolute value of the difference $(x)_{+,\delta}-(x)_\delta$. Thus, letting $\delta\to 0$ yields assertion \ref{lem:smoothing 1}. The second statement follows directly from comparing \eqref{eq:Delta-bound-smoothing-of-positive-part} with Assumption \ref{as:smoothing of the positive part}, item \ref{as:smoothing 4}. Notice that the options in this statement are mutually exclusive; otherwise, the smoothing would coincide pointwise with the positive part, which leads to a contradiction, since the zero function does not integrate to $1$.

For the third item, from the uniform bounds in \eqref{eq:Delta-bound-smoothing-of-positive-part} and \eqref{eq:property of positive part}, we obtain
$$
\liminf_{\delta} \; (x_\delta)_{+,\delta}\geq \liminf_\delta \, \big[(x_\delta)_{+}-\delta\Delta_1\big]=\liminf_\delta \;  (x_\delta)_+\geq x_+\, .
$$
Finally, the uniform Lipschitz continuity of the mapping $x \mapsto (x)_{+,\delta}$ follows from its differentiability and $\|A_{\delta}\|_{\rL^{\infty}(\R)}\leq 1$. Then, given $x,y\in \R$, we have
\begin{equation*}
    \big|(x)_{+,\delta}-(y)_{+,\delta} \big| \leq \|A_{\delta}\|_{\rL^{\infty}(x,y)}\, |x-y|\leq |x-y|\, ,
\end{equation*}
which proves the third assertion and concludes the proof.
\end{proof}
Naturally, this smoothing induces a  family of ``objective'' functionals $\set{\varphi^{\delta}}_{\delta>0}$, defined by
\begin{equation}\label{eq:smoothed objective functional}
\varphi^\delta(z,\gamma) = f(z)+\kappa\, \cI\brk{\sJ_{+,\delta}\big(z , \gamma; \omega(0)\big)\, \wp(0)},
\end{equation}
to which we refer as $\delta$-smoothed objective functionals. In turn, just as in Section \ref{subsec:original optimization problem}---and more precisely, in~\eqref{eq:Rockafellian}--- one can associate to $\varphi^{\delta}$ a functional $\Phi^\delta :\rZ\times \R\times \rT\times\bT\to \overline \R$, given by
\begin{align}\label{eq:smoothed Rockafellian}
&\Phi^\delta(z,\gamma;\pd,\ps)=\left\{\begin{array}{ll}
 f(z,\gamma)+\kappa\, \cI\brk{\sJ_{+,\delta}\big(z,\gamma;\om(\ps)\big)\, \wp(\pd)},
&\text{ if } (\pd,\ps)=0\text{ a.s.}\, ,\\
 +\infty\, , &\text{ otherwise}\, ,
\end{array}\right.
\end{align}
which we call {\sl $\delta$-smoothed Rockafellian}. It is clear that it is a Rockafellian anchored at $(\pd,\ps)=(0,0)$ for $\varphi^{\delta}$. Finally, we remark that $\varphi^\delta$ admits a minimizer in $\rZ_\ad\times \R$, which follows from the same reasoning used in the proof of Proposition~\ref{prop:existence-of-minimizer-original-problem}. Consequently, by the same argument as in Proposition~\ref{prop:existence-of-minimizer-Rockafellian-original-problem}, it follows that $\Phi^\delta$ also admits a solution.

\section{Unified analysis for corrupted Rockafellians}\label{sec:unified analysis}

This is the main section of the paper. Here, we study the problems arising from simultaneous corruptions of the probability measure~$\bbP$ and the support of its distribution. This approach is motivated by the work in~\cite{ACDR}, where the authors carry out a convergence analysis for these two types of corruptions separately in the expected-value setting. Subsequently, in Sections~\ref{subsec:corruption to distribution} and~\ref{subsec:corruption to support}, we address the corresponding separated scenarios for the CVaR functional by particularizing the results developed in the present section.

We first develop a framework that encompasses the types of corruption described above and provides the theoretical foundation for their analysis. Next, we study the existence of solutions to the corrupted problems associated with~\eqref{eq:smoothed objective functional} and~\eqref{eq:smoothed Rockafellian}. We then derive first-order optimality conditions under suitable differentiability assumptions on the operators. We end this section by establishing a generalized convergence result in the sense of weak-strong $\Gamma$-convergence (see Definition \ref{def:Weak-strong-Gamma-convergence}). 

\subsection{Preliminary constructions and the Rockafellian problem}
We begin by defining the space of probability densities
\begin{equation}\label{eq:definition-of-space-P}
    \rP:=\set{\varrho: \Xi\to \R_{+} ~\Big|~ \varrho\in \rL^\infty(\Xi)\qan \int_{\Xi}\varrho(\xi) \, d\mu(\xi) = 1},
\end{equation} 
and note that the Radon--Nikodym derivative $\rho$ introduced in the previous section belongs to this space.

The natural definition of the corrupted objective functional---by analogy with the one adopted in~\cite{ACDR}---is
\begin{equation}\label{eq:no-definition of smoothed corrupted}
f(z,\gamma)+\kappa\, \cI\brk{\sJ_{+,\delta}(z,\gamma;\eta_{\veps})\, \rho_{\veps}},
\end{equation}
where $\eta_{\veps}\in\bT$ and $\rho_\veps \in \rP$ represent a corruption map and a corrupted distribution, respectively. However, since we aim to work on a general underlying measure space $\Xi$, this notion of corrupted objective functional turns out to be slightly inadequate. Indeed, in order to establish the existence of solutions to the associated minimization problem, we require the availability of suitable compact embeddings (cf. Assumption~\ref{as:on the embedding}), which, in practice, are not attainable when~$\Xi$ is infinite-dimensional. We shall return to this issue and provide further insight into this setting in Remark~\ref{rem:insight on the setting}.

First, in order to accurately represent corruptions of $\rho$ acting in our $\delta$-smoothed functional $\varphi^{\delta}$, we consider the following assumption, which should be interpreted as ``approximability'' of $\Xi$.
\begin{assump}\label{as:approximation-property-of-Xi} There exists a sequence of measurable subspaces $\{\widetilde\Xi_\veps\}_{\veps>0}$ of $\,\Xi$ such that $\chi_{_{\widetilde{\Xi}_\veps}}(\xi)\xlongrightarrow{\veps\, \downarrow \,0}1$ for almost every $\xi\in \Xi$.
\end{assump}
According to Assumption \ref{as:on P}, $\mu$ is a $\sigma$-finite measure on $(\Xi,\cA)$, so there exists an $\cA$-measurable countable covering $\{C_n\}_{n}$ of $\Xi$, such that $C_{n}\subset C_{n+1}$ and $\mu(C_{n})<+\infty$, for all $n\in \N$. Define $\widetilde{C}_{\veps}:=C_{n_\veps}$, where $n_\veps:=\left\lceil \frac{1}{\veps} \right\rceil$ and notice that, by construction, $\widetilde{C}_{\veps}\subset \widetilde{C}_{\veps'}$ for all $\veps'\leq \veps$, which implies that $\chi_{_{\widetilde{C}_\veps}}(\xi)\xlongrightarrow{\veps\, \downarrow \,0}1$ for almost every $\xi \in \Xi$.

Now, putting $\Xi_\veps := \widetilde{\Xi}_{\veps} \cap \widetilde{C}_{\veps}$, we notice that $\Xi_\veps$ has finite measure and respects the approximation property:
\begin{equation}\label{eq:approximation property of Xi}
\lim_{\veps \downarrow 0} \chi_{_{\Xi_\veps}}(\xi)=\Big(\lim_{\veps \downarrow 0}\chi_{_{\widetilde{\Xi}_\veps}}(\xi)\Big) \, \Big(\lim_{\veps \downarrow 0}\chi_{_{\widetilde{C}_\veps}}(\xi)\Big) = 1 \quad \text{for almost every } \xi \in \Xi.
\end{equation}

From now on, we assume that each set $\Xi_\veps$ carries the corresponding restriction of the measure, $\sigma$-algebra, and norm defined in $\Xi$. More precisely, the triple $(\Xi_\veps, \cA_\veps, \mu\lvert_{\cA_\veps})$ defines a finite measure space, where $\cA_\veps := \{A \cap \Xi_\veps ~:~ A \in \cA\}$, and the norm on $\Xi_\veps$ is the restriction of $\|\cdot\|_\Xi$ to $\Xi_\veps$.

With these objects at hand, by analogy with $\rP$ and $\bT$ (cf.~\eqref{eq:definition-of-space-P} and~\eqref{eq:def of T and bT}), we define 
\begin{equation}\label{eq:definition-of-space-Pveps}
\rP_\veps:=\set{ \varrho:\Xi\to \R_+~\big|~\varrho\in \rL^\infty(\Xi),\quad \supp(\varrho)\subset\Xi_{\veps}\qan\int_{\Xi_\veps}\varrho(\xi)\,d\mu(\xi)=1 }
\end{equation}
and
\begin{equation}\label{eq:definition-of-space-bQveps}
\bQ_{\veps}=\set{\ps\in \rL^{q'}(\Xi;\Xi)~:~ \supp(\ps)\subset \Xi_\veps}\, .
\end{equation}
We note that functions in $\rP_{\veps}$ (when restricted to $\Xi_{\veps}$) constitute probability densities in $\Xi_{\veps}$, and so they can be regarded as the density of a corrupted probability distribution $\bbP_{\veps} : \cA_{\veps} \to \R$. The second space $\bQ_{\veps}$ shall represent the space of corruption maps. It is direct to see the inclusions $\rP_{\veps}\subset \rP$ and $\bQ_{\veps}\subset \bT$, so we are only refining the choices of corruption according to the new family of measure spaces.

Comparing with $\cI$, $\om$ and $\wp$, we denote  $\cI_{\veps}\brk X=\int_{\Xi_{\veps}}X$, $\wp_{\veps}(\pd)=\rho_{\veps}+\pd$ and $\omega_{\veps}( \ps)=\eta_{\veps}+ \ps$, for $X:\Xi\to \R$, $\pd:\Xi\to \R$ and $\ps:\Xi\to \Xi$, respectively. Then, we may define the smoothed-corrupted objective functional as follows: given a corrupted density $\rho_{\veps}\in \rP_{\veps}$ and a  corruption map $\eta_{\veps}\in \bQ_{\veps}$, we introduce $\varphi_\veps^\delta:\rZ\times \R \to \overline{\R}$ by
\begin{equation}\label{eq:smoothed-corrupted-objective-functional}
\varphi_{\veps}^\delta(z,\gamma)=f(z,\gamma) + \kappa\, \cI_{\veps}\big[\sJ_{+,\delta}(z,\gamma;\omega_{\veps}(0))\, \wp_{\veps}(0)\big] \, ,
\end{equation}
where $\sJ_{+,\delta}(z,\gamma;\omega_\veps(0))$ denotes $\big(\sJ(z,\gamma;\omega_\veps(0))\big)_{+,\delta}$. In practical terms, the only difference with \eqref{eq:no-definition of smoothed corrupted} is that we have restricted integration to a subdomain $\Xi_{\veps}$. From now on, we assume that we have fixed nets $(\rho_\veps)_{\veps>0}$ and $(\eta_\veps)_{\veps>0}$ of corruptions.

We thus obtain our first corrupted minimization problem to study:
\begin{equation}\label{eq:epsilon-delta problem}
\min\set{\varphi_{\veps}^{\delta}(z,\gamma)~:~ (z,\gamma)\in \rZ_{\rm ad}\times \R}.
\end{equation}
As expected, we have the following result:

\begin{prop}\label{prop:existence for smoothed-corrupted}
If $\rZ_\ad$ is bounded in $\rZ$ or $f_0$ is coercive, then~\eqref{eq:epsilon-delta problem} has at least one solution.
\end{prop}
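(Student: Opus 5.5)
We follow the proof of Proposition~\ref{prop:existence-of-minimizer-original-problem}: show that $\varphi_\veps^\delta+\imath_{\rZ_\ad}$ is sequentially weakly lower semicontinuous and coercive on $\rZ\times\R$, and then apply \cite[Theorem 3.2.5]{ABM}. Since $\rZ$ is reflexive and $\rZ_\ad$ is closed and convex, hence weakly closed, it suffices to treat the integral term.

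\emph{Lower semicontinuity.} Let $z_n\wlim z$ in $\rZ$ and $\gamma_n\to\gamma$ in $\R$. Fix $\xi\in\Xi_\veps$. Applying Assumption~\ref{as:on s}(2) with the constant sequence $\eta_\veps(\xi)$ gives $s(\eta_\veps(\xi),z_n)\wlim s(\eta_\veps(\xi),z)$ in $\rU$. The weak lower semicontinuity of $g$ then yields
\[
\liminf_n \sJ(z_n,\gamma_n;\eta_\veps)(\xi)\ge \sJ(z,\gamma;\eta_\veps)(\xi).
\]
The map $(\cdot)_{+,\delta}$ is continuous and nondecreasing, since its derivative $A_\delta$ is nonnegative. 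Hence the same inequality holds after composing with $(\cdot)_{+,\delta}$. Measurability of $\xi\mapsto \sJ_{+,\delta}(z,\gamma;\eta_\veps)(\xi)$ follows from Assumption~\ref{as:on s}(1), provided $\eta_\veps$ is measurable as an element of $\bQ_\veps$. The integrand is nonnegative because $(x)_{+,\delta}\ge 0$ and $\rho_\veps\ge0$. Fatou's lemma on $\Xi_\veps$ then gives
\[
\liminf_n \cI_\veps\brk{\sJ_{+,\delta}(z_n,\gamma_n;\eta_\veps)\rho_\veps}\ge \cI_\veps\brk{\sJ_{+,\delta}(z,\gamma;\eta_\veps)\rho_\veps}.
\]
Adding the weakly lower semicontinuous term $f_0(z)+\gamma$ proves that $\varphi_\veps^\delta$ is sequentially weakly lower semicontinuous.

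\emph{Properness and coercivity.} Coercivity in $\gamma$ is the main obstacle. Because the integrand is nonnegative, $\gamma\to-\infty$ is not controlled by the integral term alone. We will use the lower bound from Lemma~\ref{lem:props of smoothing}\ref{lem:smoothing 1}, namely $(x)_{+,\delta}\ge (x)_+-\delta\Delta_1$, together with $\cI_\veps[\rho_\veps]=1$ and $\kappa\geq 1$. This $\kappa$ plays the role of $(1-\beta)^{-1}$, and we assume $\kappa\ge1$, as in the CVaR setting. Then, for $\gamma<0$,
\[
\gamma+\kappa\,\cI_\veps\brk{(\sJ)_+\rho_\veps}\ge \gamma+\kappa\,\cI_\veps\brk{(g\circ s(\eta_\veps,z)-\gamma)\rho_\veps}\ge (\kappa-1)|\gamma|+\kappa\,\cI_\veps\brk{g\circ s(\eta_\veps,z)\,\rho_\veps}.
\]
This bound requires a lower bound on $g$. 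If $g$ is bounded below, or at least $\cI_\veps[g(s(\eta_\veps,z))\rho_\veps]$ is bounded below on bounded sets of $z$, then the right-hand side tends to $+\infty$ when $\kappa>1$. When $\kappa=1$, we expect to need $g$ bounded below, which makes minimizing sequences bounded in $\gamma$.

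For $\gamma\to+\infty$ coercivity is immediate, since the integral term is nonnegative. In $z$, coercivity comes from the boundedness of $\rZ_\ad$ or from the coercivity of $f_0$. We also need the objective to be finite somewhere. We take $z$ with $f_0(z)<\infty$ and $\gamma$ large, so that finiteness reduces to finiteness of $\cI_\veps[(\sJ)_+\rho_\veps]$. We expect this to follow from $\rho_\veps\in \rL^\infty$, $\mu(\Xi_\veps)<\infty$, and integrability of $g\circ s$, which holds for the uncorrupted problem.

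\emph{Conclusion.} With these two properties, \cite[Theorem 3.2.5]{ABM} applies exactly as in Proposition~\ref{prop:existence-of-minimizer-original-problem}. It yields a minimizer of \eqref{eq:epsilon-delta problem}.
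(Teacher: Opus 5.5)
Your argument follows the paper's. The paper's proof is a one-line reduction to Proposition~\ref{prop:existence-of-minimizer-original-problem}: continuity and monotonicity of $(\cdot)_{+,\delta}$, Fatou's lemma for the nonnegative integrand $\sJ_{+,\delta}(z,\gamma;\eta_\veps)\,\rho_\veps$, then coercivity and \cite[Theorem 3.2.5]{ABM}. Your lower semicontinuity step is exactly that argument.

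You go beyond the paper on coercivity in $\gamma$, which the paper only asserts. Your extra hypotheses are genuinely needed, not artifacts of your method. Take $\rZ=\rU=\rZ_\ad=\R$, $s(\xi,z)=z$, $g(u)=-u^2$ and $f_0(z)=|z|$. All hypotheses of the proposition hold, yet $\varphi_\veps^\delta(z,-z^2)=|z|-z^2+\kappa\,(0)_{+,\delta}\to-\infty$. Now assume $\kappa>1$ and $g\ge -C$. Your estimate for $\gamma<-C$, together with $(\cdot)_{+,\delta}\ge 0$ for $\gamma\ge -C$, gives $\varphi_\veps^\delta(z,\gamma)\ge f_0(z)+h(\gamma)$, where $h$ is bounded below and $h(\gamma)\to+\infty$ as $|\gamma|\to\infty$. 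Since $f_0$ is bounded below on $\rZ_\ad$ in both cases, this closes the argument.

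Three side remarks need correcting.

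\textbf{The case $\kappa=1$.} A lower bound on $g$ does not rescue it. For the Chen--Mangasarian smoothing one has $0<A_\delta<1$, so for $h>0$,
\[
\varphi_\veps^\delta(z,\gamma)-\varphi_\veps^\delta(z,\gamma-h)=h-\kappa\,\cI_\veps\Big[\rho_\veps\int_0^h A_\delta\big(\sJ(z,\gamma;\eta_\veps)+\tau\big)\,d\tau\Big]>0
\]
whenever $\kappa\le 1$ and $\varphi_\veps^\delta(z,\gamma)<\infty$. The functional therefore strictly decreases as $\gamma\downarrow-\infty$ and has no minimizer, even if $g\ge 0$. So $\kappa>1$ is necessary; this matches item (2) of Theorem~\ref{thm:optimality-conditions}, which forces $\cI_\veps[\wp_\veps A_\delta(\cdot)]=1/\kappa$.

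\textbf{Local lower bounds.} A lower bound on $\cI_\veps[g(s(\eta_\veps,z))\rho_\veps]$ only on bounded sets of $z$ suffices when $\rZ_\ad$ is bounded. It does not suffice when only $f_0$ is coercive; the example above is of that type.

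\textbf{Finiteness.} Integrability of $g\circ s$ under $\bbP$ says nothing about $g(s(\eta_\veps(\cdot),z))$ under $\rho_\veps\,d\mu$, because $\eta_\veps$ may move mass to where $\bbP$ vanishes. Finiteness is what Assumption~\ref{as:growth-condition-on-gos} provides, together with $\eta_\veps\in\rL^{q'}$, $\mu(\Xi_\veps)<\infty$ and $\rho_\veps\in\rL^\infty$. In any case properness is not needed: if $\varphi_\veps^\delta\equiv+\infty$ on $\rZ_\ad\times\R$, every point is a minimizer.
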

\begin{proof}
It follows from a slight modification of the proof of Proposition~\ref{prop:existence-of-minimizer-original-problem}, taking into account the properties of the smoothing $(\,\cdot\,)_{+,\delta}$ in place of those of $(\,\cdot\,)_{+}$, and using the positivity of $\rho_\veps$.
\end{proof}
Next, to define a Rockafellian associated with $\varphi_\veps^\delta$, we must define appropriate spaces for the perturbation variables. To this end, consider nets of reflexive Banach spaces $\set{\rW_\veps}_{\veps>0}$ and $\set{\bW_\veps}_{\veps>0}$, which consist of functions $\Xi_{\veps}\to \R$ and $\Xi_{\veps}\to \Xi$, respectively.
We assume that they satisfy the following assumption.
\begin{assump}\label{as:on the embedding}
For every $\veps>0$,
\begin{enumerate}
\item $\rW_\veps$ is compactly embedded into $\rL^q(\Xi_\veps)$.
\item $\bW_{\veps}$ is compactly embedded into $\rL^{q'}(\Xi_{\veps};\Xi)$.
\end{enumerate}
\end{assump}
Due to this assumption, and recalling the definition of the extension-by-zero operator (cf. Section~\ref{sec:notation and preliminaries}), we are able to define the following spaces:
\begin{equation}
\begin{aligned}\label{eq:definition of Teps}
\rT_\veps&:=\Big\{\pd:\Xi\to \R~ \big|~\pd\lvert_{_{\Xi_\veps}}\in \rW_\veps\oplus \cS_\veps^\rho \qan \pd\lvert_{_{\Xi_{\veps}^{c}}}\in \rL^{q}(\Xi_{\veps}^{c})\Big\}\\
&= \rE_{\, \Xi_{\veps},0}(\rW_{\veps})\oplus \rE_{\, \Xi_{\veps},0}(\cS_\veps^\rho)\oplus \rE_{\, \Xi_{\veps}^{c},0}\big(\rL^{q}(\Xi_{\veps}^{c})\big) \, ,
\end{aligned}
\end{equation}
where $\cS_{\veps}^\rho := \spa\{\rho_{\veps}\lvert_{_{\Xi_\veps}}\}$, and
\begin{equation}
\begin{aligned}\label{eq:definition of bTeps}
\bT_{\veps}:&=\Big\{ \ps :\Xi\to \Xi~:~  \ps\lvert_{_{\Xi_{\veps}}}\in \bW_{\veps}\oplus\cS_\veps^\eta \qan  \ps\lvert_{\Xi_{\veps}^{c}}\in \rL^{q'}(\Xi_{\veps}^{c};\Xi)\Big\}\\
&=\rE_{\Xi_{\veps},0}(\bW_{\veps})\oplus\rE_{\, \Xi_{\veps},0}(\cS_\veps^\eta)\oplus \rE_{\Xi_{\veps}^{c}}(\rL^{q'}(\Xi_{\veps}^{c};\Xi))\, ,
\end{aligned}
\end{equation}
where $\cS_\veps^\eta := \spa\{I|_{\Xi_\veps},\eta_\veps|_{\Xi_\veps}\}$. When writing these spaces as direct sums, we are tacitly assuming that $\rW_\veps$ and $\bW_\veps$ do not contain the added terms, because otherwise, there is no need for enrichment. In practice, if the choice of spaces includes any of the added elements, one can force a direct sum by deleting the finite-dimensional spaces generated by these elements, and so the subsequent analysis is also valid.

We imbue these spaces with the norm deduced from the direct sum decompositions. More precisely, given $\pd \in \rT_{\veps}$, we define 
$$
\|\pd\|_{\rT_{\veps}}:=\nor{w}{\rW_{\veps}}+\nor{v}{\cS_\veps^\rho}+\nor{g}{\rL^{q}(\Xi_{\veps}^{c})}\, ,
$$ 
where $w\in \rW_{\veps}$, $v\in \cS_\veps^\rho$ and $g\in \rL^{q}(\Xi_{\veps}^{c})$ are the unique functions satisfying
\begin{equation*}
    \pd=\rE_{\Xi_{\veps},0}(w)+\rE_{\Xi_{\veps},0}(v)+\rE_{{\Xi_{\veps}^{c}},0}(g)\, .
\end{equation*}
Similarly, given $\bt\in \bT_\veps$, we set 
\begin{equation*}
    \|\bt\|_{\bT_\veps} := \|\bw\|_{\bW_\veps} + \|\bv\|_{\cS_\veps^\eta} + \|\bg\|_{\rL^{q'}(\Xi_\veps^c;\Xi)}\, ,
\end{equation*}
where $\bw\in \bW_\veps$, $\bv\in \cS_\veps^\eta$ and $\bg\in \rL^{q'}(\Xi_\veps^c;\Xi)$ are the unique functions satisfying 
\begin{equation*}
    \bt = \rE_{\Xi_\veps,0}(\bw) + \rE_{\Xi_\veps,0}(\bv)+ \rE_{\Xi_\veps^c,0}(\bg)\, .
\end{equation*}

We remark that we do not require a specific norm for the spaces $\cS_\veps^\rho$ and $\cS_\veps^\eta$, as in a finite-dimensional space, all norms induce equivalent topologies. 

The use of compactly embedded spaces is, in broad terms, a way to enforce stronger regularity on the admissible functions. This additional regularity is what ultimately enables the existence of minimizers for the associated Rockafellians (see Theorem~\ref{thm:existence-of-minimizers}), introduced below. Likewise, the need to enrich the formulation with corruptions will arise naturally when establishing the weak–strong $\Gamma$-convergence of the corrupted Rockafellians towards the uncorrupted one (see Theorem~\ref{thm:weak-strong-gamma-convergence-result}). An important note is that this construction preserves the compactness property of $\rW_\veps$ and $\bW_\veps$. Moreover, since $\rT_\veps$ and $\bT_\veps$ are defined as direct sums of reflexive Banach spaces with the natural norms, they are reflexive as well.

Lastly, we consider a function $\cH:\rT\times \bT\to \R$ that is weakly lower semicontinuous, coercive, satisfies $\cH(0,0)=0$, and enjoys the following property: if $\cH(\pd_{k}, \ps_{k})\xlim{~k~}0$, then $(\pd_{k}, \ps_{k})\xlim{~k~}(0,0)$ strongly. $\cH$ could be, for instance, the $\rT\times \bT$ norm or a stronger norm. Furthermore, we introduce a sequence $(\theta_{\veps})_{\veps>0}\subset \R_{+}$ of penalty parameters that satisfies $\theta_{\veps}\xlim{\veps\to 0}+\infty$.

We are finally in a position to associate a Rockafellian functional with $\varphi_{\veps}^{\delta}$. Define $\Phi_{\veps}^\delta:\rZ\times \R\times \rT \times \bT \to \overline \R$ as
\begin{equation}\label{eq:smoothed corrupted Rockafellian}
\begin{aligned}
\Phi_{\veps}^\delta\big(z,\gamma;\pd, \ps\big) &:= f(z,\gamma)+\kappa\, \cI_{\veps}\left[\sJ_{+,\delta}\big(z,\gamma;\om_{\veps}( \ps)\big)\, \wp_{\veps}(\pd)\right] + \theta_{\veps}\, \cH(\pd, \ps) \\
&\quad +\imath_{\rT_{\veps}\cap \left(\rP-\rho_{\veps}\right)}(\pd)+\imath_{\bT_\veps}( \ps)\, .
\end{aligned}
\end{equation}
We notice that $\Phi_{\veps}^\delta$ is anchored at $(\pd,\ps)=(0,0)$ for $\varphi_{\veps}^\delta$ (cf.~\eqref{eq:smoothed-corrupted-objective-functional}). Then, the smoothed-corrupted Rockafellian problem associated with \eqref{eq:epsilon-delta problem}  is 
\begin{equation}\label{eq:problem-associated-to-smoothed-corrupted-Rockafellian}
    \min \Big\{ \Phi_{\veps}^\delta\big(z,\gamma;\pd, \ps\big) : \quad (z,\gamma;\pd,\ps)\in \rZ_{\ad}\times \R\times \rT\times \bT\Big\}\, .
\end{equation}
We remark that the existence of a solution to this problem does not follow from arguments similar to those in Proposition~\ref{prop:existence-of-minimizer-Rockafellian-original-problem}, since the definition of the Rockafellian does not incorporate the $+\infty$ part. The corresponding result is established in Section~\ref{subsec:existence-of-solution-to-smoothed-corrupted}.

The following growth condition guarantees that the integral term in \eqref{eq:smoothed corrupted Rockafellian} is finite. From now on, we assume that $q$ and $q'$ are H\"older conjugates, i.e., $1/q + 1/q' = 1$.
\begin{assump}\label{as:growth-condition-on-gos}
There exists a function $\varrho:[0,+\infty)\to[0,+\infty)$ such that 
$$
    \big|(g\circ s)(\xi,z)\big|\leq \varrho(\|z\|_{\rZ})\, \|\xi\|_{\Xi} \qquad \forall (\xi,z) \in \Xi\times \rZ\, .
$$
\end{assump}

Indeed, given $(z,\gamma,\pd,\ps)\in \rZ\times \R\times \rT\times \bT$, from the Lipschitz continuity of $(\cdot)_{+,\delta}$ (see Lemma \ref{lem:props of smoothing}), it follows that 
\begin{equation*}
    0\leq \sJ_{+,\delta}(z,\gamma;\omega_\veps(\bt))\leq |\sJ(z,\gamma;\omega_\veps(\bt))|+(0)_{+,\delta} \quad \qin \Xi_\veps\, .
\end{equation*} 
Notice that Assumption~\ref{as:growth-condition-on-gos} allows us to deduce that $(g \circ s)(\omega_\veps(\bt),z) \in \rL^{q'}(\Xi_\veps)$, since $\omega_\veps(\bt) = \eta_\veps + \bt \in \bT$. Together with the finiteness of $\mu(\Xi_\veps)$, this implies that $\sJ(z,\gamma;\omega_\veps(\bt))\lvert_{_{\Xi_\veps}} \in \rL^{q'}(\Xi_\veps)$. By the same argument, $(0)_{+,\delta}$ also belongs to $\rL^{q'}(\Xi_\veps)$. Hence, the right-hand side of the above inequality lies in $\rL^{q'}(\Xi_\veps)$ and, consequently, so does $\sJ_{+,\delta}(z,\gamma;\omega_\veps(\bt))$. Finally, since $\wp_\veps(\pd)\lvert_{_{\Xi_\veps}} \in \rL^{q}(\Xi_\veps)$, an application of H\"older’s inequality yields $\sJ_{+,\delta}(z,\gamma;\omega_\veps(\bt))\, \wp_\veps(\pd)\lvert_{_{\Xi_\veps}} \in \rL^{1}(\Xi_\veps)$, as desired.

\begin{remark}\label{rem:insight on the setting}
We end this section by giving some insight into the aforementioned setting. At first glance, the framework presented herein may appear excessively complicated or overly elaborate, but we shall see that its implications make it a reasonable approach to adopt. For instance, this approach may be helpful when $\Xi$ cannot be embedded in a finite-dimensional space (e.g. a space of sequences), as it enables us to study a limiting problem written in such a space through problems posed in finite-dimensional spaces. In this case, identifying function spaces $\Xi \to \R$ and $\Xi \to \Xi$ that are compactly embedded in $\rL^q(\Xi,\R)$ and $\rL^q(\Xi;\Xi)$ is a difficult task, since classical Rellich-type compactness results depend heavily on the properties of $\Xi$, particularly on its dimension and size (see e.g.~\cite{IvanishkoKrotov,Bjorn,Romanovskii,Franchi1999,Hajasz}). As we shall see in Section \ref{subsec:existence-of-solution-to-smoothed-corrupted}, the lack of a compactness property hinders the proof of existence of a minimizer for the smoothed-corrupted problems, as the $\rL^q$-topology turns out not to be strong enough. To overcome this difficulty, we can take the sequence $\set{\Xi_\veps}_{\veps>0}$ to be finite-dimensional which, in real applications, makes it possible to find function spaces that possess the desired compactness properties (cf. Assumption~\ref{as:on the embedding}).
\end{remark}

\begin{remark}
Notice that if $\Xi$ turns out to be included in a finite-dimensional space and has finite measure, we can take every subspace $\Xi_\veps$ to be $\Xi$ itself. In this sense, our framework includes the case in which a finite-dimensional noise assumption is made (see~\cite[Part I, Section 2]{AKLR} and the references therein). We consider this particular case in Section~\ref{subsec:finite-dimensional case}.
\end{remark}

\subsection{Existence of solution}\label{subsec:existence-of-solution-to-smoothed-corrupted}

The following result concerns the existence of a minimizer for each problem associated with the smoothed-corrupted Rockafellians. 
\begin{theorem}\label{thm:existence-of-minimizers}
Fix $\veps>0$ and $\delta>0$ and suppose that  Assumptions \ref{as:on s}, \ref{as:on f0 and g}, \ref{as:on P}, \ref{as:smoothing of the positive part}, \ref{as:approximation-property-of-Xi}, \ref{as:on the embedding} and \ref{as:growth-condition-on-gos} hold. Additionally, assume that $f_0$ is coercive or $\rZ_{\ad}$ is bounded. Then, there exists $$(z^{\star},\gamma^{\star};\pd^\star,\bt^\star)\in \rZ_{\rm ad}\times \R\times \rT_{\veps} \times \bT_{\veps}$$ such that
\begin{equation*}
\Phi_{\veps}^\delta(z^{\star},\gamma^{\star};\pd^\star,\bt^\star)\leq \Phi_{\veps}^\delta(z,\gamma;\pd,\bt)\qquad \forall (z,\gamma,\pd,\bt)\in \rZ_{\rm ad}\times \R\times \rT\times \bT \, .
\end{equation*}
\end{theorem}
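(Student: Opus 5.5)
The plan is the direct method of the calculus of variations applied to $\Phi_\veps^\delta$ on the reflexive space $\rZ\times\R\times\rT_\veps\times\bT_\veps$. First we check that $\Phi_\veps^\delta$ is proper and that its infimum is finite. The point $(z_0,\gamma_0;0,0)$, with $z_0\in\rZ_\ad$ and $f_0(z_0)<\infty$, is admissible: $0\in\rT_\veps\cap(\rP-\rho_\veps)$ because $\rho_\veps\in\rP_\veps\subset\rP$, $0\in\bT_\veps$, and $\cH(0,0)=0$. At this point the value is finite by Assumption~\ref{as:growth-condition-on-gos} and the H\"older argument following it. For a lower bound, note that the integrand is nonnegative whenever $\wp_\veps(\pd)\in\rP$, and that $\theta_\veps\cH\ge 0$ since $\cH$ is coercive and weakly lower semicontinuous (one may shift by a constant if necessary). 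Since $\Phi_\veps^\delta=+\infty$ outside $\rT_\veps\times\bT_\veps$, minimizing over $\rT\times\bT$ is the same as minimizing over $\rT_\veps\times\bT_\veps$.

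Next, take a minimizing sequence $(z_k,\gamma_k;\pd_k,\bt_k)$ and establish boundedness. Coercivity of $\cH$ bounds $(\pd_k,\bt_k)$ in $\rT\times\bT$, and coercivity of $f_0$ (or boundedness of $\rZ_\ad$) bounds $z_k$. For $\gamma_k$, we use $\gamma+\kappa\,\cI_\veps[(J-\gamma)_{+,\delta}\wp_\veps]\ge\gamma$ to bound it from above, and the $\kappa\ge1$ structure of CVaR (that is, $\kappa=(1-\beta)^{-1}>1$ together with $\int\wp_\veps(\pd)=1$) to bound it from below, using $(x)_{+,\delta}\ge x_+-\delta\Delta_1$. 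We must bound $\pd_k,\bt_k$ in the $\rT_\veps,\bT_\veps$ norms, not merely in $\rL^q$. To do this, I would choose $\cH$ (or assume) that it is coercive relative to these norms. Otherwise, I would argue that the components in $\cS^\rho_\veps$, $\cS^\eta_\veps$ and $\rL^q(\Xi_\veps^c)$ are controlled by the $\rL^q$ norm, since the sum is direct and $\cS$ is finite-dimensional, and then bound the $\rW_\veps$ part separately. I expect this to be the delicate point, and I would likely need coercivity of $\cH$ on $\rT_\veps\times\bT_\veps$.

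By reflexivity we extract weakly convergent subsequences: $z_k\wlim z^\star$ (with $z^\star\in\rZ_\ad$ since the set is closed and convex), $\gamma_k\to\gamma^\star$, and $(\pd_k,\bt_k)\wlim(\pd^\star,\bt^\star)$ in $\rT_\veps\times\bT_\veps$. By Assumption~\ref{as:on the embedding}, the restrictions to $\Xi_\veps$ converge strongly in $\rL^q(\Xi_\veps)$ and $\rL^{q'}(\Xi_\veps;\Xi)$; the finite-dimensional components converge strongly automatically. Passing to a further subsequence, $\eta_\veps+\bt_k\to\eta_\veps+\bt^\star$ pointwise a.e.\ on $\Xi_\veps$, and $\rho_\veps+\pd_k\to\rho_\veps+\pd^\star$ a.e.\ and in $\rL^q(\Xi_\veps)$. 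The constraint $\pd^\star\in\rP-\rho_\veps$ survives: nonnegativity and unit mass pass to the limit by the $\rL^q$ convergence on the finite-measure set $\Xi_\veps$, and the $\rL^\infty$ part requires closedness, which needs care and may call for a uniform bound.

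Finally comes lower semicontinuity. By Assumption~\ref{as:on s}(2) and the weak lower semicontinuity of $g$, for a.e.\ $\xi\in\Xi_\veps$ we get $\liminf_k \sJ(z_k,\gamma_k;\om_\veps(\bt_k))(\xi)\ge\sJ(z^\star,\gamma^\star;\om_\veps(\bt^\star))(\xi)$. Monotonicity and continuity of $(\cdot)_{+,\delta}$ give the same inequality for $\sJ_{+,\delta}$. Multiplying by the a.e.\ convergent nonnegative densities and applying Fatou's lemma yields the liminf inequality for the integral term. Weak lower semicontinuity of $f_0$ and $\cH$ handles the remaining terms, and the indicators vanish at the limit, so the limit point is a minimizer.
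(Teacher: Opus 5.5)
Your route is the paper's: restrict to $\rZ\times\R\times\rT_\veps\times\bT_\veps$, get weak lower semicontinuity from the compact embeddings (a.e.\ convergence on $\Xi_\veps$, Assumption~\ref{as:on s}(2), monotonicity of $(\cdot)_{+,\delta}$, Fatou), and conclude by coercivity and the Weierstrass theorem. Your semicontinuity step is essentially the paper's. The gap is in the coercivity/compactness half. The paper calls this ``straightforward'', although $f(z,\gamma)=f_0(z)+\gamma$ is not coercive in $\gamma$, so following the paper will not close it.

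Concretely, your lower bound on $\gamma_k$ fails, and with it your claim that the infimum is finite. The integral term only sees $m:=\int_{\Xi_\veps}\wp_\veps(\pd)\,d\mu$, because $\cI_\veps$ integrates over $\Xi_\veps$. Yet $\rho_\veps+\pd\in\rP$ fixes the mass on all of $\Xi$, and $\rT_\veps$ contains the unrestricted summand $\rL^q(\Xi_\veps^c)$. Your estimate therefore gives $(1-\kappa m)\gamma$, not $(1-\kappa)\gamma$.

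In fact, suppose $\mu(\Xi_\veps^c)>0$, which is automatic when $\mu(\Xi)=\infty$. Take $h=\chi_B/\mu(B)$ with $B\subset\Xi_\veps^c$ and $0<\mu(B)<\infty$, and set $\pd_h:=h-\rho_\veps$. Then $\pd_h|_{\Xi_\veps}=-\rho_\veps|_{\Xi_\veps}\in\cS_\veps^\rho$, $\pd_h|_{\Xi_\veps^c}=h$, and $\rho_\veps+\pd_h=h\in\rP$. So with $\bt=0$ all indicators vanish, while $\wp_\veps(\pd_h)=0$ on $\Xi_\veps$ kills the integral term. Hence $\Phi_\veps^\delta(z_0,\gamma;\pd_h,0)=f_0(z_0)+\gamma+\theta_\veps\cH(\pd_h,0)\to-\infty$ as $\gamma\to-\infty$. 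In this generality there is no minimizer at all, so no argument can supply your bound.

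You need $\mu(\Xi_\veps^c)=0$ (as in Section~\ref{subsec:finite-dimensional case}) or a constraint forcing $m=1$, e.g.\ $\pd=0$ off $\Xi_\veps$. Only then do $\kappa>1$ and a bound on $\cI_\veps[(g\circ s)(\om_\veps(\bt),z)\,\wp_\veps(\pd)]$ uniform on bounded sets give your lower bound. That uniform bound itself needs the function $\varrho$ of Assumption~\ref{as:growth-condition-on-gos} to be locally bounded.

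The two points you leave open cannot be closed from the stated hypotheses either. First, coercivity of $\cH$ on $\rT\times\bT$ bounds $(\pd_k,\bt_k)$ only in $\rL^q\times\rL^{q'}$, and your fallback does not work. A compact embedding of an infinite-dimensional $\rW_\veps$ is never bounded below, so the $\rW_\veps$-norm is not controlled by the $\rL^q$-norm. Even the $\cS_\veps^\rho$-coefficient is not $\rL^q$-continuous when $\rho_\veps|_{\Xi_\veps}$ lies in the $\rL^q$-closure of $\rW_\veps$, the usual Sobolev situation: if $w_n\in\rW_\veps$ with $w_n\to-\rho_\veps$ in $\rL^q(\Xi_\veps)$, then $w_n+\rho_\veps\to 0$ while its coefficient stays $1$. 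Without bounds in $\rT_\veps\times\bT_\veps$ you have no weak compactness there, the compact embedding is never reached, and weak $\rT$-limits may leave $\rT_\veps$. You must assume coercivity with respect to the $\rT_\veps\times\bT_\veps$ norms.

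Second, $\rT_\veps\cap(\rP-\rho_\veps)$ is not weakly closed. On $\Xi_\veps^c$ you only have weak $\rL^q$ convergence, so if $\mu(\Xi_\veps^c)=\infty$ mass can escape to infinity and the limit has total mass $<1$. Without a uniform $\rL^\infty$ bound, the limit also need not be essentially bounded. The indicator is then $+\infty$ at the weak limit, and your final claim that ``the indicators vanish at the limit'' is unsupported. The paper's semicontinuity argument treats only the integral term and never checks this either. These steps require strengthened hypotheses (or a modified $\rP$ and $\rT_\veps$), not a sharper version of the same argument.
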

\begin{proof}
We begin by defining the auxiliary functional $\wt \Phi_{\veps}^\delta:\rZ\times \R\times \rT_{\veps} \times \bT_\veps \to\overline\R$ given by
\begin{equation*}
    \wt \Phi_{\veps}^\delta(z,\gamma;\pd,\ps) := \Phi_{\veps}^\delta\lvert_{\rZ\times \R\times \rT_{\veps}\times \bT_\veps}(z,\gamma;\pd,\ps)+\imath_{\rZ_{\rm ad}}(z)\, ,
\end{equation*}
which, by definition, agrees with $\Phi_{\veps}^\delta$ in $\rZ_{\rm ad}\times \R\times \rT_{\veps}\times \bT_\veps$. We further realize that, if $\wt \Phi_{\veps}^\delta$ has a minimizer $(z^{\star},\gamma^\star;\pd^{\star},\ps^\star)\in \rZ_{\rm ad}\times \R\times \rT_{\veps}\times \bT_\veps$, we have
\[
\Phi_{\veps}^\delta(z^{\star},\gamma^\star;\pd^{\star},\ps^\star)=\wt \Phi_{\veps}^\delta(z^{\star},\gamma^\star,\pd^{\star},\ps^\star)\leq \wt \Phi_{\veps}^\delta(z,\gamma,\pd,\ps)=\Phi_{\veps}^\delta(z,\gamma;\pd,\ps)\, ,
\]
for all $(z,\gamma;\pd,\ps)\in \rZ_{\rm ad}\times \R\times \rT_{\veps}\times \bT_\veps$. Moreover, since $\Phi_{\veps}^\delta(z,\gamma;\pd,\ps)=+\infty$ whenever $(\pd,\ps)\not \in \rT_{\veps}\times \bT_\veps$, it holds
\[
\Phi_{\veps}^\delta(z^{\star},\gamma^\star;\pd^{\star},\ps^\star)\leq \Phi_{\veps}^\delta(z,\gamma;\pd,\ps) \qquad\forall (z,\gamma;\pd,\ps)\in \rZ_{\rm ad}\times \R\times \rT \times \bT\, .
\]
Thus, it suffices to show the existence of such a minimizer for $\wt \Phi_{\veps}^\delta$ in order to conclude the proof. We do so in what follows.

We start by proving that $\wt \Phi_\veps^\delta$  is weakly lower semicontinuous; in doing so, we pay special attention to the integral term. Consider sequences $(z_n)_{n\in \N}$ in $\rZ$, $(\gamma_n)_{n\in \N}$ in $\R$, $(\pd_n)_{n\in \N}$ in $\rT_\veps$, and $(\ps_n)_{n\in \N}$ in $\bT_\veps$, converging weakly to $z\in \rZ$, $\gamma\in \R$, $\pd\in \rT_\veps$, and $\bt\in \bT_\veps$, respectively. If $(z_n,\pd_n)_{n\in \N}$ does not have a subsequence contained in $\rZ_{\rm ad}\times(\rP-\rho_\veps)$, at least one of the indicator terms in the auxiliary Rockafellian equals $+\infty$, and the property follows. Otherwise, there exists a (not relabeled) subsequence  such that $z_n\in \rZ_{\rm ad}$ and $\rho_\veps+\pd_n\in \rP$, for all $n\in \N$. We may assume that the limit inferior $\ds \liminf_{n}\; \Phi_\veps^\delta(z_n,\gamma_n;\pd_n,\ps_n)$ is attained along this subsequence.

Notice that $\big(\pd_n|_{\Xi_{\veps}}\big)_{n\in \N}$ is weakly convergent in $\rW_\veps\oplus \cS_\veps^\rho$. Since the latter is compactly embedded in $\rL^q(\Xi_\veps)$, strong convergence (up to subsequences) implies almost everywhere pointwise convergence in $\Xi_\veps$. Hence, there exists a subsequence $(\pd_{n'})_{n'}$ satisfying
\begin{equation}\label{eq:pointwise a.e. convergence}
\rho_\veps(\xi)+\pd_{n'}(\xi) \xlim{n'\to \infty}\rho_\veps(\xi)+\pd(\xi)~~\text{ for a.e.}~~\xi \in \Xi_\veps\, .
\end{equation}
Similarly, since $\ps_{n'} \wlim \ps$ in $\bT_\veps$, using the compact embedding, we are able to find a subsequence (not relabeled) pointwise convergent to $\ps$. Then,
\begin{equation*}
    \eta_\veps(\xi) + \ps_{n'}(\xi) \xlim{n'\to \infty} \eta_\veps(\xi) + \ps(\xi)~~\text{ for a.e.}~~\xi \in \Xi_\veps\, ,
\end{equation*}
which combined with the fact that $z_n \wlim z$ in $\rZ$, and employing Assumption~\ref{as:on s}, yields
\begin{equation*}
    s(\eta_\veps(\xi)+\ps_{n'}(\xi),z_{n'}) \wlim s(\eta_\veps(\xi)+\ps(\xi),z) \qin \rU\, , ~~\text{ for a.e.}~~\xi \in \Xi_\veps\, .
\end{equation*}
Since $g$ is sequentially weakly lower semicontinuous (cf. Assumption~\ref{as:on f0 and g}), the above convergence implies that 
\begin{equation*}
    \liminf_{n'\to \infty}~ \Big( (g\circ s)(\eta_\veps(\xi)+\ps_{n'}(\xi),z_{n'}) \Big) \geq (g\circ s)(\eta_\veps(\xi)+\ps(\xi),z) ~~\text{ for a.e.}~~\xi \in \Xi_\veps\, .
\end{equation*}
Using that $\gamma_{n'}\to \gamma$ in $\R$, the monotonicity and continuity of $(\,\cdot\,)_{+,\delta}$, and the foregoing inequality, we arrive at
\begin{equation}\label{eq:wlsc of gos}
    \liminf_{n'\to \infty}~ \big(\sJ_{+,\delta}(z_{n'},\gamma_{n'};\omega_\veps(\ps_{n'}))\big)(\xi) \geq  \big(\sJ_{+,\delta}(z,\gamma;\omega_\veps(\ps))\big)(\xi)~~\text{ for a.e.}~~\xi \in \Xi_\veps \, .
\end{equation}
Bearing in mind \eqref{eq:pointwise a.e. convergence} and \eqref{eq:wlsc of gos}, and using the fact that $\rho_\veps + \pd_{n'}\geq 0$, a straightforward application of Fatou's Lemma yields
\begin{equation*}
\liminf_{n'\to \infty} ~\cI_\veps\big( \sJ_{+,\delta}\big(z_{n'},\gamma_{n'};\omega_\veps(\ps_{n'}) \big)\, \wp_{\veps}(\pd_{n'}) \big) \geq \cI_\veps\big( \sJ_{+,\delta}\big(z,\gamma;\omega_\veps(\ps) \big)\, \wp_{\veps}(\pd) \big) \, .
\end{equation*}
This, together with the weak lower semicontinuity of $f_0$ (see Assumption~\ref{as:on f0 and g}) and $\cH$, yields
$$
\liminf_{n\to \infty}~ \wt \Phi_{\veps}^\delta(z_{n},\gamma_{n};\pd_{n},\ps_n)=\liminf_{n'\to \infty}~\wt\Phi_{\veps}^\delta(z_{n'},\gamma_{n'};\pd_{n'},\ps_{n'})\geq \wt\Phi_{\veps}^\delta(z,\gamma;\pd,\ps) \, ,
$$
so $\wt\Phi_\veps^\delta$ is weakly lower semicontinuous. In turn, it is straightforward to verify that $\wt\Phi_\veps^\delta$ is coercive, since $\cH$ is coercive in $\rT \times \bT$ and either $f$ is coercive in $\rZ \times \R$ or the admissible set $\rZ_{\ad}$ is bounded. Consequently, noting that $\rZ\times \R\times\rT_\veps \times \bT_\veps$ is a reflexive Banach space, the weak lower semicontinuity and coercivity of $\wt\Phi_\veps^\delta$ allow us to invoke a particular case of the Weierstrass minimization theorem (see~\cite[Theorem 3.2.5]{ABM}) to establish the existence of a minimizer. Finally, since $\wt \Phi_\veps^\delta$ is not identically equal to $+\infty$, any minimizer must satisfy $(z^\star,\gamma^\star,\pd^\star,\ps^\star) \in \rZ_{\rm ad} \times \R \times \rT_\veps\times \bT_\veps$. According to the initial remark, this completes the proof.
\end{proof}

\begin{remark}\label{rem:alternative for compactness}
We finish this section by commenting that, as seen in the previous proof, instead of introducing Assumption \ref{as:on the embedding}, one might incorporate compact operators into the objective functional. This would allow us to dispense with the assumption that there is an actual embedding into the spaces $\rL^q(\Xi_\veps)$ and $\rL^q(\Xi_\veps;\Xi)$. We do not adopt this approach because, as far as we see, it would heavily modify the analysis of convergence to the original problem. So far, we see two possible options: First, also including compact operators in the original Rockafellian, as arguments of $\omega$ and $\wp$, would help the convergence analysis and would not modify the fact that we have Rockafellians anchored at $0$. The second option is only including them in the corrupted Rockafellians, and then deducing suitable conditions under which the results can still be carried out. One possible advantage of considering compact operators instead of an embedding is that it may allow us to strengthen the convergence results, from weak-strong Gamma convergence to Mosco convergence. This claim comes from the fact that compact operators upgrade weak convergence to strong convergence. In our setting, we are using this fact to leverage weak convergence in the proof  of Theorem \ref{thm:existence-of-minimizers}, but we are not able to  take advantage of this fact when proving convergence (see proof of Theorem \ref{thm:weak-strong-gamma-convergence-result}). If this could be overcome, one gets Mosco convergence, which is the strongest among these types of generalized convergence.
\end{remark}

\subsection{Optimality conditions}\label{subsec:optimality-conditions}

Having established the existence of minimizers for our perturbed Rockafellian problems, we now aim to derive first-order optimality conditions. We use the following notation for differentials: given an operator $A:X\to Y$ between Banach spaces, we denote its directional derivative at $x \in X$ in the direction $\ul{x} \in X$ by $\bD A(x)[\ul x]$. Furthermore, if $A$ is Fréchet differentiable, we denote its Fréchet derivative at $x$ by $\bD A(x) \in \sL(X,Y)$. When $X$ is a product space, namely $X = \prod_{i=1}^\ell X_i$, we denote by $\bD_{x_i} A(x_1,\ldots,x_\ell)[\ul x_i]$ the derivative with respect to the variable $x_i$. Notice that, if $x = (x_1,\ldots,x_\ell)$,
\begin{equation*}
    \bD A(x)[\ul x] = \bD_{x_1} A(x)[\ul x_1] + \bD_{x_2} A(x)[\ul x_2] + \cdots + \bD_{x_\ell} A(x)[\ul x_\ell]\, .
\end{equation*}

Given a convex set $S$ of a vector space $X$, the normal cone of $S$ at $x \in S$ is denoted by $N_{S}(x)$. Moreover, we use the following convention for adjoint notation: given vector normed spaces $X$, $Y$, and $Z$, and given an operator $G \in \sL(X,Y)$, we define $G^*:\sL(Y,Z)\to \sL(X,Z)$ by $G^*(F):=F\circ G$ for all $F\in \sL(Y,Z)$. In particular, for $Z=\R$ and $F\in Y'=\sL(Y,\R)$, this reduces to the standard adjoint $G^*:Y'\to X'$. Finally, we denote by $\langle \cdot,\cdot\rangle$ the duality pairing of a space and its topological dual, omitting the underlying spaces when clear from the context.

Now, notice that~\eqref{eq:problem-associated-to-smoothed-corrupted-Rockafellian} can be rewritten equivalently as
\begin{equation}\label{eq:auxiliary problem for optimality conditions}
    \min \Big\{ \wh\Phi_\veps^\delta(z,\gamma;\pd,\ps) + \imath_{\rZ_\ad \times \R\times (\rT_\veps\cap(\rP-\rho_\veps))\times \bT_\veps}(z,\gamma;\pd,\ps)~:\quad (z,\gamma;\pd,\ps)\in \rZ\times \R\times \rT \times \bT \Big\}\, ,
\end{equation}
where $\wh \Phi_\veps^\delta:\rZ\times \R\times \rT\times \bT\to \R$ is defined by
\begin{equation}\label{eq:auxiliar functional for optimality conditions}
    \wh\Phi_\veps^\delta(z,\gamma;\pd,\ps) := f(z,\gamma) + \kappa\, \cI_\veps\big( \sJ_{+,\delta}(z,\gamma;\omega_\veps(\ps))\, \wp_\veps(\pd) \big) + \theta_\veps\, \cH(\pd,\ps)\, .
\end{equation}

We use this auxiliary functional to derive the optimality conditions. As usual, we begin by establishing further assumptions on the differentiability of the functions.
\begin{assump}[Differentiability]\label{as:on-differentiability}\hfill
\begin{enumerate}
    \item $\cH:\rT\times \bT\to \R$ is Fréchet differentiable, with derivative $\bD\cH(\pd,\ps)\in \sL(\rT\times \bT,\R)$.
    \item There exists $p\in [1,+\infty]$ such that for all $\ps\in \bT$ and $z\in \rZ_\ad$, $s(\omega_\veps(\ps),z)\in \rL^p(\Xi;\rU)$.
    \item There exists an open set $V$ of $\rZ$, with $\rZ_\ad\subset V$, such that $f_0$ is Fréchet differentiable in $V$, with derivative $\bD f_0(z)\in \sL(\rZ,\R)$, for all $z\in V$.
    \item For each $z\in \rZ_\ad$, the function 
    \begin{equation*}
        \bT\ni \ps \mapsto s(\ps,z)\in \rL^p(\Xi,\rU)
    \end{equation*}
    is Fréchet differentiable with derivative $\bD_{\ps} s(\ps,z) \in \sL(\bT,\rL^p(\Xi;\rU))$.
    \item For each $\ps\in \bT$, the function
    \begin{equation*}
        V\ni z \mapsto s(\ps,z) \in \rL^p(\Xi;\rU)
    \end{equation*}
    is Fréchet differentiable with derivative $\bD_{z}s(\ps,z)\in \sL(\rZ,\rL^p(\Xi;\rU))$.
    \item The mapping 
    \begin{equation*}
        \rL^p(\Xi;\rU) \ni u \mapsto g\circ u \in \rL^{q'}(\Xi) 
    \end{equation*}
    is Fréchet differentiable with derivative $\bD g(u)\in \sL(\rL^p(\Xi;\rU),\rL^{q'}(\Xi))$.
\end{enumerate}
\end{assump}

Under Assumption~\ref{as:on-differentiability}, we are able to prove the following result:
\begin{lemma}\label{lem:derivatives-of-Rockafellian}
The functional $\wh\Phi_\veps^\delta$ is Fréchet differentiable, and its derivatives with respect to each variable are given explicitly by
\begin{equation*}
\begin{aligned}
\bD_{z}\wh \Phi_\veps^\delta(z,\gamma;\pd,\ps) &= \bD f_0(z)  + \kappa \, \cI_\veps\Big[ \wp_\veps(\pd)\, A_\delta\big( \sJ(z,\gamma;\omega_\veps(\ps)) \big)\, \bD g (s(\omega_\veps(\ps),z))\circ \bD_{z} s(\omega_\veps(\ps),z) \Big]\, , \\
\bD_{\gamma}\wh \Phi_\veps^\delta(z,\gamma;\pd,\ps) &= 1 - \kappa \, \cI_\veps\Big[ \wp_\veps(\pd)\, A_\delta\big( \sJ(z,\gamma;\omega_\veps(\ps))\big) \Big] \, , \\
\bD_{\pd}\wh \Phi_\veps^\delta(z,\gamma;\pd,\ps) &=\theta_\veps\, \bD_{\pd}\cH(\pd,\ps) + \kappa\, \cI_\veps \Big[ \sJ_{+,\delta}(z,\gamma;\omega_\veps(\ps))\, I \Big]\, ,
\end{aligned}
\end{equation*}
where $I:\bT\to \bT$ is the identity map, and
\begin{equation*}
    \begin{aligned}
        &\bD_{\ps}\wh \Phi_\veps^\delta(z,\gamma;\pd,\ps) \\
        &\qquad = \theta_\veps\, \bD_{\ps}\cH(\pd,\ps) + \kappa\, \cI_\veps\Big[ \wp_\veps(\pd)\, A_\delta\big( \sJ(z,\gamma;\omega_\veps(\ps)) \big)\, \bD g (s(\omega_\veps(\ps),z))\circ \bD_{\ps} s(\omega_\veps(\ps),z) \Big]\, .
    \end{aligned}
\end{equation*}
\end{lemma}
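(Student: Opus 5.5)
The plan is to write $\wh\Phi_\veps^\delta$ as a sum of three pieces and differentiate each one separately. The pieces are $f(z,\gamma)=f_0(z)+\gamma$, the penalty $\theta_\veps\cH(\pd,\ps)$, and the integral term
\begin{equation*}
\mathcal G(z,\gamma;\pd,\ps):=\kappa\,\cI_\veps\big[\sJ_{+,\delta}(z,\gamma;\omega_\veps(\ps))\,\wp_\veps(\pd)\big].
\end{equation*}
The first two pieces are immediate. By Assumption~\ref{as:on-differentiability}(3), $f$ has derivative $(\bD f_0(z),1)$ in $(z,\gamma)$, which produces the terms $\bD f_0(z)$ and $1$. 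By Assumption~\ref{as:on-differentiability}(1), the penalty contributes $\theta_\veps\bD_{\pd}\cH$ and $\theta_\veps\bD_{\ps}\cH$. All remaining work concerns $\mathcal G$.

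I will factor $\mathcal G$ as a composition of maps between function spaces and apply the chain rule. The first map is
\begin{equation*}
(z,\ps)\mapsto u:=s(\omega_\veps(\ps),z)\in\rL^p(\Xi;\rU).
\end{equation*}
It is Fréchet differentiable jointly by items (4)--(5), since $\ps\mapsto\omega_\veps(\ps)=\eta_\veps+\ps$ is an affine translation in $\bT$. One point needs care here: partial Fréchet differentiability does not give joint differentiability unless the partials are continuous. I would either assume this tacitly, as the statement lists only partials, or note that the lemma is really a statement about the partial derivatives. The second map is $u\mapsto g\circ u\in\rL^{q'}(\Xi)$, differentiable by item (6). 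The third is the affine map $(v,\gamma)\mapsto v-\gamma$, which gives $\sJ\in\rL^{q'}(\Xi_\veps)$; the constant $\gamma$ lies in $\rL^{q'}(\Xi_\veps)$ because $\mu(\Xi_\veps)<\infty$. Finally, the bilinear pairing $(w,\pd)\mapsto\kappa\int_{\Xi_\veps}w\,(\rho_\veps+\pd)\,d\mu$ on $\rL^{q'}(\Xi_\veps)\times\rL^q(\Xi_\veps)$ is bounded by Hölder's inequality, hence Fréchet differentiable. Its derivative in $\pd$ is $\kappa\,\cI_\veps[w\,\cdot\,]$, which yields the stated $\bD_{\pd}$ formula with $w=\sJ_{+,\delta}$.

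The main obstacle is the Nemytskii operator $w\mapsto(w)_{+,\delta}$ on $\rL^{q'}(\Xi_\veps)$. I must show it is Fréchet differentiable with derivative $\underline w\mapsto A_\delta(w)\,\underline w$, viewed as an operator from $\rL^{q'}$ into $\rL^{r}$ for a suitable $r$. Nemytskii operators are generally not Fréchet differentiable from $\rL^{q'}$ to itself, so I will settle for a norm gap. By \eqref{eq:derivatives of smoothing} the second derivative is bounded by $\|\zeta\|_\infty/\delta$, and Taylor's theorem gives the pointwise estimate
\begin{equation*}
|(w+h)_{+,\delta}-(w)_{+,\delta}-A_\delta(w)h|\le \tfrac{\|\zeta\|_\infty}{2\delta}\,|h|^2 .
\end{equation*}
The remainder is also bounded by $2|h|$, using $0\le A_\delta\le1$ from Lemma~\ref{lem:props of smoothing}(iv). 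Interpolating between these two bounds gives a remainder of size $\min(C|h|^2,2|h|)$, whose norm is $o(\|h\|_{\rL^{q'}})$ in $\rL^{r}$ for every $r<q'$. Since $\wp_\veps(\pd)\in\rL^\infty\cap\rL^q$ on the finite-measure set $\Xi_\veps$, this loss is harmless after pairing with $\wp_\veps(\pd)$. For the $\pd$-derivative the product needs to be estimated by Hölder's inequality in the $(q',q)$ pairing, but it is only linear in $\pd$, so no remainder arises there.

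Assembling the chain rule then gives the four formulas. The derivatives in $z$ and $\ps$ come from
\begin{equation*}
\kappa\,\cI_\veps\big[\wp_\veps(\pd)\,A_\delta(\sJ)\,\bD g(u)\circ\bD_{z}s\big]\qquad\text{and}\qquad \kappa\,\cI_\veps\big[\wp_\veps(\pd)\,A_\delta(\sJ)\,\bD g(u)\circ\bD_{\ps}s\big],
\end{equation*}
respectively. The $\gamma$-derivative comes from $-\kappa\,\cI_\veps[\wp_\veps(\pd)A_\delta(\sJ)]$ added to the $1$ from $f$.
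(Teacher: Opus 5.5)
Your route is what the paper's one-line proof (``product and chain rules'') intends. You are right to flag the two points it passes over: $w\mapsto(w)_{+,\delta}$ is not Fr\'echet differentiable from $\rL^{q'}$ to $\rL^{q'}$, and items (4)--(5) of Assumption~\ref{as:on-differentiability} give only partial derivatives of $s$.

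\textbf{The gap.} The step that fails is how you absorb the norm gap. You assert $\wp_\veps(\pd)\in\rL^\infty$, but $\pd$ ranges over all of $\rT=\rL^q(\Xi)$. The function $\wp_\veps(\pd)=\rho_\veps+\pd$ is bounded only when $\rho_\veps+\pd\in\rP$; at a general point it lies merely in $\rL^q(\Xi_\veps)$. A remainder that is $o(\|h\|_{\rL^{q'}})$ only in $\rL^r$, $r<q'$, can be paired by H\"older only against $\rL^{r'}$ with $r'>q$. So your argument gives differentiability only where $\wp_\veps(\pd)$ is more integrable than $\rL^q$. That covers the minimizers in Theorem~\ref{thm:optimality-conditions}, where $\rho_\veps+\pd_\veps^\star\in\rP$, but not every point of the domain, as the lemma claims.

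\textbf{The repair.} Drop the operator-norm detour and test your pointwise bound $|R(h)|\le\min\{C|h|^2,2|h|\}$ directly against the fixed weight $\phi:=\wp_\veps(\pd)\in\rL^q(\Xi_\veps)$. Splitting $\Xi_\veps$ at a level $\tau>0$ gives
\begin{equation*}
\int_{\Xi_\veps}|R(h)|\,|\phi|\,d\mu\le C\tau\,\|h\|_{\rL^{q'}}\|\phi\|_{\rL^q}+2\,\|h\|_{\rL^{q'}}\,\big\|\phi\,\chi_{\{|h|>\tau\}}\big\|_{\rL^q}.
\end{equation*}
Since $\mu(\{|h|>\tau\})\le\tau^{-q'}\|h\|_{\rL^{q'}}^{q'}$, absolute continuity of $E\mapsto\int_E|\phi|^q\,d\mu$ sends the last factor to zero as $h\to0$. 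Choosing $\tau$ small first then yields $o(\|h\|_{\rL^{q'}})$ for every $\phi\in\rL^q$.

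There is also a cross term $\int_{\Xi_\veps}[(w+h)_{+,\delta}-(w)_{+,\delta}]\,\underline{\pd}\,d\mu$ in the joint expansion, despite your remark that no remainder arises. It is bounded by $\|h\|_{\rL^{q'}}\|\underline{\pd}\|_{\rL^q}$ via item (iv) of Lemma~\ref{lem:props of smoothing}, so it is harmless. Together these estimates give Fr\'echet differentiability of $(w,\pd)\mapsto\kappa\int_{\Xi_\veps}(w)_{+,\delta}\,\wp_\veps(\pd)\,d\mu$ on $\rL^{q'}(\Xi_\veps)\times\rL^q(\Xi_\veps)$, and your chain rule then goes through.

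On joint versus partial differentiability: if you keep only the partials, Theorem~\ref{thm:optimality-conditions} still follows. The feasible set is a product of convex sets, so each of its four conditions comes from minimizing in one block with the others frozen.
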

\begin{proof}
    This follows from the assumptions, common properties of directional derivatives (such as the product and chain rules), and straightforward algebraic manipulations. The computations are omitted for the sake of brevity.
\end{proof}

\begin{theorem}\label{thm:optimality-conditions}
Suppose that the assumptions of Theorem~\ref{thm:existence-of-minimizers}  and Lemma~\ref{lem:derivatives-of-Rockafellian} hold. If $\big(z_\veps^\star,\gamma_\veps^\star;\pd_\veps^\star,\ps_\veps^\star\big)\in \rZ_{\rm ad}\times \R \times \rT_\veps\cap (\rP-\rho_\veps) \times \bT_\veps$ is an optimal solution of the problem \eqref{eq:problem-associated-to-smoothed-corrupted-Rockafellian}, and $\lambda_\veps^\star:=(\omega_\veps(\ps_\veps^\star),z_\veps^\star)$. Then,
\begin{enumerate}
\item \label{eq:first-condition} For all $z\in \rZ_\ad$,
\begin{equation*}
\Big\langle \bD f_0(z_\veps^\star) + \kappa\, \cI_\veps \big[ \wp_\veps(t_\veps^\star)\, A_\delta\big(\sJ(z_\veps^\star,\gamma_\veps^\star;\omega_\veps(\ps_\veps^\star))\big)\, \bD_z s(\lambda_\veps^\star)^*\, \bD g(s(\lambda_\veps^\star)) \big] , z-z_\veps^\star \Big\rangle \geq 0\, .
\end{equation*}
\item \label{eq:second-condition} It holds
\begin{equation*}
\cI_\veps\Big[ \wp_\veps(\pd_\veps^\star)\, A_\delta\big( \sJ(z_\veps^\star,\gamma_\veps^\star;\omega_\veps(\ps_\veps^\star))\big) \Big]  = \frac{1}{\kappa}\, .
\end{equation*}
\item \label{eq:third-condition} For all $\pd \in \rT_\veps \cap (\rP-\rho_\veps)$,
\begin{equation*}
\Big\langle \theta_\veps\, \bD_{\pd}\cH(\pd_\veps^\star,\ps_\veps^\star) + \kappa\, \cI_\veps \Big[ \sJ_{+,\delta}(z_\veps^\star,\gamma_\veps^\star;\omega_\veps(\ps_\veps^\star))\, I \Big], \pd-\pd_\veps^\star\Big\rangle \geq 0 \, .
\end{equation*}
\item \label{eq:fourth-condition} For all $\ps \in \bT_\veps$, 
\begin{equation*}
\Big\langle \theta_\veps\, \bD_{\ps}\cH(\pd_\veps^\star,\ps_\veps^\star) + \kappa \, \cI_\veps\Big[ \wp_\veps(\pd_\veps^\star)\, A_\delta\big( \sJ(z_\veps^\star,\gamma_\veps^\star;\omega_\veps(\ps_\veps^\star)) \big)\, \bD_{\ps} s(\lambda_\veps^\star)^*\, \bD g (s(\lambda_\veps^\star) \Big], \ps-\ps_\veps^\star\Big\rangle  \geq 0 \, .
\end{equation*}
\end{enumerate}
\end{theorem}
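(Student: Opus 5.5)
The plan is to apply the standard first-order necessary condition for minimizing a Fréchet differentiable function over a convex set, one block of variables at a time. By the reformulation \eqref{eq:auxiliary problem for optimality conditions}, the optimal tuple $(z_\veps^\star,\gamma_\veps^\star;\pd_\veps^\star,\ps_\veps^\star)$ minimizes $\wh\Phi_\veps^\delta$ over the product set $K:=\rZ_\ad\times\R\times(\rT_\veps\cap(\rP-\rho_\veps))\times\bT_\veps$. This set is convex: $\rZ_\ad$ is convex by assumption, $\rT_\veps$ and $\bT_\veps$ are linear subspaces, and $\rP-\rho_\veps$ is a translate of the convex set $\rP$. Moreover, $\wh\Phi_\veps^\delta$ is Fréchet differentiable by Lemma~\ref{lem:derivatives-of-Rockafellian}; its domain in $z$ is the open set $V\supset\rZ_\ad$.

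Fix one coordinate. For instance, take $z\in\rZ_\ad$ and set $z_s:=z_\veps^\star+s(z-z_\veps^\star)$ for $s\in(0,1]$, keeping the other variables at their optimal values. Convexity gives $z_s\in\rZ_\ad$, so optimality yields
\begin{equation*}
s^{-1}\big(\wh\Phi_\veps^\delta(z_s,\gamma_\veps^\star;\pd_\veps^\star,\ps_\veps^\star)-\wh\Phi_\veps^\delta(z_\veps^\star,\gamma_\veps^\star;\pd_\veps^\star,\ps_\veps^\star)\big)\ge 0 .
\end{equation*}
Letting $s\downarrow0$ gives $\langle \bD_z\wh\Phi_\veps^\delta(\cdot),z-z_\veps^\star\rangle\ge0$, which is the variational inequality $0\in\bD_z\wh\Phi_\veps^\delta+N_{\rZ_\ad}(z_\veps^\star)$. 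Substituting the formula from Lemma~\ref{lem:derivatives-of-Rockafellian} gives item~(1), once the composition $\bD g(s(\lambda_\veps^\star))\circ\bD_z s(\lambda_\veps^\star)$ is written as $\bD_z s(\lambda_\veps^\star)^*\bD g(s(\lambda_\veps^\star))$ via the adjoint convention fixed above.

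Items~(3) and~(4) follow identically, using convexity of $\rT_\veps\cap(\rP-\rho_\veps)$ and of $\bT_\veps$. For~(4), since $\bT_\veps$ is a subspace, one could even upgrade the inequality to an equality on $\bT_\veps$; the inequality form suffices. For item~(2), $\gamma$ ranges over all of $\R$, an open set, so both directions $\pm1$ are admissible. Hence $\bD_\gamma\wh\Phi_\veps^\delta=0$, and Lemma~\ref{lem:derivatives-of-Rockafellian} gives $1-\kappa\,\cI_\veps[\wp_\veps(\pd_\veps^\star)A_\delta(\sJ)]=0$, i.e.\ the stated identity with $1/\kappa$.

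The only delicate point is ensuring that the derivative formulas of Lemma~\ref{lem:derivatives-of-Rockafellian} are legitimately evaluated at the minimizer. This requires the integrands to be in the right spaces, so that the pairings make sense and the differentiation under $\cI_\veps$ is justified. Here I would invoke Assumption~\ref{as:on-differentiability}(2),(6) to place $\bD g(s(\lambda_\veps^\star))\circ\bD_z s(\lambda_\veps^\star)$ in $\sL(\rZ,\rL^{q'}(\Xi))$. I would also use $\wp_\veps(\pd_\veps^\star)|_{\Xi_\veps}\in\rL^q(\Xi_\veps)$ and $0\le A_\delta\le1$, so that Hölder's inequality together with the finite measure of $\Xi_\veps$ makes each bracketed term a bounded linear functional. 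With that verified, the proof is just the directional-derivative argument above applied four times.
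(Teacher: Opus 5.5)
Your proposal is correct and follows essentially the paper's route. The paper cites \cite[Corollary 1.12.3]{Kr} to place $-\bD\wh\Phi_\veps^\delta$ at the minimizer in the normal cone of the convex product set, splits that cone as $N_{\rZ_\ad}\times\{0\}\times N_{\rT_\veps\cap(\rP-\rho_\veps)}\times N_{\bT_\veps}$, and substitutes Lemma~\ref{lem:derivatives-of-Rockafellian}; you prove that same first-order condition directly, via one-sided difference quotients along segments in each block, and your remark that (4) in fact holds with equality (because $\bT_\veps$ is a linear subspace, whose normal cone is its annihilator) is also right.
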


\begin{proof}
To avoid overloading the notation, we set $\cZ_\veps^\star:=(z_\veps^\star,\gamma_\veps^\star;\pd_\veps^\star,\ps_\veps^\star)$. By~\cite[Corollary 1.12.3.]{Kr}, it follows that
\begin{equation*}
- \bD \widehat\Phi_{\veps}^\delta (\cZ_\veps^\star)\in \partial \, \imath_{\rZ_{\rm{ad}}\times \R \times \rT_\veps \cap (\rP -\rho_\veps)\times \bT_\veps}\big(\cZ_\veps^\star \big)\, ,
\end{equation*}
where the latter denotes, as usual, the subdifferential. Moreover, noting that the set in the indicator function is convex, we have 
\begin{equation*}
\partial \, \imath_{\rZ_{\rm{ad}}\times \R \times \rT_\veps \cap (\rP -\rho_\veps)\times \bT_\veps}\big(\cZ_\veps^\star \big) \cong N_{\rZ_\ad}(z_\veps^\star) \times \{0\} \times N_{\rT_\veps \cap (\rP - \rho_\veps)}(\pd_\veps^\star) \times N_{\bT_\veps}(\ps_\veps^\star)\, .
\end{equation*}
Then, 
\begin{equation*}
\begin{array}{c}
-\bD_z\widehat\Phi_{\veps}^\delta (\cZ_\veps^\star) \in N_{\rZ_\ad}(z_\veps^\star) \, , \qquad-\bD_\gamma \widehat\Phi_{\veps}^\delta (\cZ_\veps^\star) = 0\, , \\[1.5ex]
-\bD_\pd \widehat\Phi_{\veps}^\delta (\cZ_\veps^\star) \in N_{\rT_\veps \cap (\rP-\rho_\veps)}(\pd_\veps^\star) \qan -\bD_\ps \widehat\Phi_{\veps}^\delta (\cZ_\veps^\star) \in N_{\bT_\veps}(\ps_\veps^\star) \, ,
\end{array}
\end{equation*}
which mean, respectively,
\begin{equation*}
\begin{array}{c}
\big\langle \bD_z\widehat\Phi_{\veps}^\delta (\cZ_\veps^\star),z-z_\veps^\star\big\rangle \geq 0 \, , \qquad \bD_\gamma \widehat\Phi_{\veps}^\delta (\cZ_\veps^\star) = 0\, ,  \\[1.5ex]
\big\langle \bD_\pd \widehat\Phi_{\veps}^\delta (\cZ_\veps^\star) , \pd-\pd_\veps^\star \big\rangle \geq 0 \qan \big\langle \bD_\ps \widehat\Phi_{\veps}^\delta (\cZ_\veps^\star) , \ps - \ps_\veps^\star \big\rangle \geq 0 \, .
\end{array}
\end{equation*}
for all $z\in \rZ_\ad$, $\pd \in \rT_\veps \cap (\rP-\rho_\veps)$, and $\ps \in \bT_\veps$. Thus, by replacing each row according to the expressions in~Lemma \ref{lem:derivatives-of-Rockafellian} and employing the adjoint notation, we obtain the desired statements. Further details are omitted for brevity.
\end{proof}

\subsection{Weak-strong Gamma convergence}\label{sec:weak-strong-Gamma-convergence}

The following result concerns the convergence of our family of smoothed-corrupted Rockafellians to the original one, in the sense of weak-strong Gamma convergence, under a reasonable assumption on the behavior of corruption. This generalized notion of convergence is of interest because it preserves minimizing sequences, in the sense given in Proposition~\ref{prop:ws Gamma convergence preserves minimizing sequences}. 

In addition, it is worth mentioning that the following convergence theorem does not rely on existence nor optimality conditions of solutions, and so its hypothesis are milder than those of Theorems~\ref{thm:existence-of-minimizers} and~\ref{thm:optimality-conditions}

\begin{theorem}\label{thm:weak-strong-gamma-convergence-result}Suppose that Assumptions \ref{as:on s}, \ref{as:on f0 and g}, \ref{as:on P}, \ref{as:smoothing of the positive part} and \ref{as:approximation-property-of-Xi} hold. Let $\rho\in \rP\cap \rT_\veps$ for all $\veps > 0$ and consider corruptions $(\rho_{\veps})_{\veps>0}$ and $(\eta_\veps)_{\veps>0}$ such that $\rho_\veps \in \rP_{\veps}\cap\rT_{\veps}$ and $\eta_\veps\in \bQ_\veps\cap \bT_\veps$, for each $\veps>0$. Assume that 
\begin{equation}\label{eq:penalty parameters diverge}
    \lim_{\veps\downarrow 0} \, \theta_\veps=+\infty\, ,
\end{equation}
and
\begin{equation}\label{eq:assumption on corruptions}
\lim_{\veps\downarrow 0} \, \theta_\veps\cH(\rho-\rho_\veps,\eta_\veps -I)=0\, .
\end{equation}
Then, there holds $\Phi_\veps^\delta \overset{\Gamma}{\rightharpoondown} \Phi^\delta$ as $\veps \downarrow 0$, in the sense of Definition \ref{def:Weak-strong-Gamma-convergence}.
\end{theorem}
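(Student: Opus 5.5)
The plan is to verify the two conditions of Definition~\ref{def:Weak-strong-Gamma-convergence}, with $x=(z,\gamma)$ carrying the weak topology and $y=(\pd,\ps)$ the strong one. Two facts are used throughout. First, \eqref{eq:penalty parameters diverge} and \eqref{eq:assumption on corruptions} force $\cH(\rho-\rho_\veps,\eta_\veps-I)\to0$. By the assumed property of $\cH$ this gives $\rho_\veps\to\rho$ in $\rT$ and $\eta_\veps\to I$ in $\bT$ strongly. Second, I will take $\cH\ge 0$, as for the norm examples mentioned before \eqref{eq:smoothed corrupted Rockafellian}; this is an assumption I add, not one stated in the theorem. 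Then $\cH(\pd,\ps)>0$ whenever $(\pd,\ps)\neq(0,0)$: otherwise the constant sequence $(\pd,\ps)$ would have $\cH=0$ and hence converge strongly to $(0,0)$.

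\emph{Limsup condition.} If $(\pd,\ps)\neq(0,0)$, then $\Phi^\delta(z,\gamma;\pd,\ps)=+\infty$ and the constant sequence works. If $(\pd,\ps)=(0,0)$, I take the recovery sequence
\[
z_\veps=z,\quad \gamma_\veps=\gamma,\quad \pd_\veps=\rho-\rho_\veps,\quad \ps_\veps=I-\eta_\veps .
\]
With these choices $\wp_\veps(\pd_\veps)=\rho$ and $\om_\veps(\ps_\veps)=I$.
\begin{itemize}
\item The indicator terms vanish. Indeed $\rho,\rho_\veps\in\rT_\veps$ and $\rho_\veps+\pd_\veps=\rho\in\rP$. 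On $\Xi_\veps$, $\ps_\veps$ lies in $\cS^\eta_\veps$ by construction; on $\Xi_\veps^c$ it equals $I$. For the latter I will read finiteness of $\cH(\rho-\rho_\veps,\eta_\veps-I)$ as encoding $I\in\rL^{q'}(\Xi_\veps^c;\Xi)$, so that $\ps_\veps\in\bT_\veps$.
\item The penalty term $\theta_\veps\cH(\pd_\veps,\ps_\veps)$ tends to $0$ by \eqref{eq:assumption on corruptions}, up to the sign convention in the second slot.
\item Strong convergence $(\pd_\veps,\ps_\veps)\to(0,0)$ follows from the first fact above.
\item The integral term equals $\kappa\int_\Xi\chi_{\Xi_\veps}\sJ_{+,\delta}(z,\gamma;I)\rho\,d\mu$. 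Its integrand is nonnegative and increases a.e.\ to the full integrand by \eqref{eq:approximation property of Xi}, so monotone (or dominated) convergence gives the limit $\kappa\,\cI[\sJ_{+,\delta}(z,\gamma)\rho]$.
\end{itemize}
Hence $\limsup_\veps\Phi^\delta_\veps(z_\veps,\gamma_\veps;\pd_\veps,\ps_\veps)\le\Phi^\delta(z,\gamma;0,0)$.

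\emph{Liminf condition.} Let $z_\veps\wlim z$, $\gamma_\veps\to\gamma$, and $(\pd_\veps,\ps_\veps)\to(\pd,\ps)$ strongly. Pass to a subsequence realizing the liminf; if it is $+\infty$ there is nothing to prove. Along this subsequence we may therefore assume $z_\veps\in\rZ_\ad$, $\wp_\veps(\pd_\veps)\in\rP$ and $\ps_\veps\in\bT_\veps$. Then $\wp_\veps(\pd_\veps)\ge0$, so the integral term is nonnegative and
\[
\Phi_\veps^\delta(z_\veps,\gamma_\veps;\pd_\veps,\ps_\veps)\ge f_0(z_\veps)+\gamma_\veps+\theta_\veps\cH(\pd_\veps,\ps_\veps).
\]
\begin{itemize}
\item Case $(\pd,\ps)\neq(0,0)$. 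Weak lower semicontinuity and properness of $f_0$ give $\liminf f_0(z_\veps)>-\infty$. Weak lower semicontinuity of $\cH$ gives $\liminf\cH(\pd_\veps,\ps_\veps)\ge\cH(\pd,\ps)>0$. Since $\theta_\veps\to\infty$, the liminf is $+\infty=\Phi^\delta(z,\gamma;\pd,\ps)$.
\item Case $(\pd,\ps)=(0,0)$. Combining with the first fact, $\wp_\veps(\pd_\veps)=\rho_\veps+\pd_\veps\to\rho$ in $\rL^q$ and $\om_\veps(\ps_\veps)\to I$ in $\rL^{q'}$. A further subsequence gives a.e.\ convergence of both. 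Assumption~\ref{as:on s} then yields $s(\om_\veps(\ps_\veps)(\xi),z_\veps)\wlim s(\xi,z)$ for a.e.\ $\xi$. Weak lower semicontinuity of $g$ and Lemma~\ref{lem:props of smoothing} (monotonicity and continuity of $(\cdot)_{+,\delta}$ at fixed $\delta$) give the pointwise bound
\[
\liminf_\veps\ \chi_{\Xi_\veps}\sJ_{+,\delta}\big(z_\veps,\gamma_\veps;\om_\veps(\ps_\veps)\big)\wp_\veps(\pd_\veps)\ \ge\ \sJ_{+,\delta}(z,\gamma)\,\rho\quad\text{a.e.},
\]
where \eqref{eq:approximation property of Xi} is used for the characteristic functions. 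Fatou's lemma on these nonnegative integrands, together with weak lower semicontinuity of $f_0$ and $\theta_\veps\cH\ge0$, gives $\liminf_\veps\Phi^\delta_\veps\ge\Phi^\delta(z,\gamma;0,0)$.
\end{itemize}

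The main obstacle is the liminf case $(\pd,\ps)=(0,0)$. Strong convergence of the perturbations alone says nothing about the total densities $\rho_\veps+\pd_\veps$ or maps $\eta_\veps+\ps_\veps$. Assumption \eqref{eq:assumption on corruptions} has to be used a second time to get $\rho_\veps\to\rho$ and $\eta_\veps\to I$. One must also track subsequences carefully, so that the a.e.-convergent subsequence still realizes the liminf.
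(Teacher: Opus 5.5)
Your proposal is correct and follows the paper's proof. You use the same recovery sequence $(z,\gamma,\rho-\rho_\veps,I-\eta_\veps)$ for the limsup. For the liminf you use the same subsequence, a.e.-convergence and Fatou argument, based on $\rho_\veps\to\rho$ and $\eta_\veps\to I$ extracted from \eqref{eq:penalty parameters diverge}--\eqref{eq:assumption on corruptions}, with the penalty forcing $+\infty$ away from $(\pd,\ps)=(0,0)$.

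Your explicit handling of points the paper leaves implicit is a real gain in rigor. These are the needed nonnegativity of $\cH$, the positivity $\cH(\pd,\ps)>0$ off the origin, the penalty term and its sign in the limsup, and the passage to a.e.-convergent subsequences. The one slip is that $\chi_{\Xi_\veps}$ need not increase monotonically, but dominated convergence covers that step, as does simply $\cI_\veps[X]\le\cI[X]$ for nonnegative $X$.
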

\begin{proof}
We start by proving the limit inferior condition (cf.~\eqref{eq:liminf condition}). We do so in the same spirit in which the weak lower semicontinuity of $\Phi_\veps^\delta$ was proved in Theorem \ref{thm:existence-of-minimizers}, though with the key difference that now, every object is indexed by $\veps$.  To this end, consider an arbitrary sequence $\big(z_{\veps},\gamma_{\veps},\pd_{\veps},\ps_\veps\big)_{\veps>0}\subset \rZ\times \R\times \rT\times \bT$ satisfying 
$$
(z_{\veps},\gamma_{\veps})\wlim(z,\gamma)\qan (\pd_{\veps},\ps_\veps)\longrightarrow (\pd,\ps)~~\text{ as }~~\veps\downarrow 0\, .
$$
Let $\veps'$ denote the index of a subsequence along which the relevant limit inferior (cf. Definition \ref{def:Weak-strong-Gamma-convergence}) is attained. By the same reasoning made earlier, we may assume that such subsequence satisfies
\begin{equation*}
\pd_{\veps'}\in \rT_{\veps'} \cap (\rP-\rho_{\veps'})\qan
\ps_{\veps '}\in \bT_{\veps'}\, ,
\end{equation*}
so that the indicator terms in $\Phi_\veps^\delta$ vanish. In addition, we notice for later use that, from \eqref{eq:penalty parameters diverge} and \eqref{eq:assumption on corruptions}, one gets convergence
$$
\rho_{\veps'} \xlim{~\veps'~}\rho \qin \rT \qan \eta_{\veps'}\xlim{\veps'}I \qin \bT \, .
$$
This, together with the weak convergence of $(z_{\veps'})_{\veps'}$ to $z$ and the properties of $s$,  gives
$$
s(\eta_{\veps'}+\ps_{\veps'},z_{\veps'})\wlim s(I+\ps,z)\qin \rU,~~\text{a.e. in }\Xi \, .
$$
Furthermore, using the weak convergence of $(\gamma_{\veps '})_{\veps '}$, weak lower semicontinuity of $g$, and the continuity and monotonicity of $(\,\cdot\,)_{+,\delta}$, respectively, we obtain that
\begin{equation}\label{eq:lim inf bound of J}
\liminf_{\veps'}\sJ_{+,\delta}(z_{\veps'},\gamma_{\veps'};\om_{\veps'}(\ps_{\veps'}))\geq \sJ_{+,\delta}(z,\gamma;\om(\ps)) \, . 
\end{equation}
From the fact that convergence in $\rT$ implies almost everywhere pointwise convergence, and since $\pd_{\veps'} \in \rP - \rho_{\veps'}$, it follows that
$$
\rho_{\veps'}+\pd_{\veps'}\xlim{~\veps'~}\rho+\pd\qan \rho+\pd \geq 0 ~~\text{ a.e. in }\Xi \, .
$$
Moreover, according to the approximation property of the subspaces $\Xi_\veps$ (cf.~\eqref{eq:approximation property of Xi}), we get
$$
\chi_{_{\Xi_{\veps'}}}(\rho_{\veps'}+\pd_{\veps'})\xlim{~\veps'~}\rho+\pd ~~\text{ a.e. in }\Xi \, .
$$
This, together with~\eqref{eq:lim inf bound of J} and an application of Fatou’s lemma, yields
$$
\liminf_{\veps'} \, \cI_{\veps'}\big[ \sJ_{+,\delta}(z_{\veps'},\gamma_{\veps'};\om_{\veps'}(\ps_{\veps'}))\, \wp_{\veps'}(\pd) \big]\geq \cI[\sJ_{+,\delta}(z,\gamma;\om(\ps))\, \wp(\pd)] \, .
$$
Next, we treat the penalty term. We distinguish two cases: when $(\pd,\ps)=(0,0)$ and when $(\pd,\ps)\neq (0,0)$. In this way, we notice that
$$
\liminf_{\veps'}\, \theta_\veps \, \cH(\pd_{\veps '},\ps_{\veps '})\geq 
\begin{cases}
0 &\text{if } (\pd,\ps)=(0,0)\, ,  \\
+\infty &\text{otherwise}\, , 
\end{cases}
$$
where the first case follows from the nonnegativity of $\cH$ and the penalty parameters, and the second one follows from the properties of the limit inferior. Therefore, the limit inferior condition follows from this, the weak lower semicontinuity of $f_0$, and the fact that we are working in the minimizing sequence indexed by $\veps'$.

In second place, we prove the limit superior condition (cf.~\eqref{eq:limsup condition}). Fixed $(z,\gamma,\pd,\ps)\in \rZ\times \R\times \rT\times \bT$, we must find a sequence converging to it, satisfying the bound in \eqref{eq:limsup condition}. We distinguish two cases. First, when $(\pd,\ps)=(0,0)$ a.e. in $\Xi$, we choose the sequence $(z_\veps,\gamma_\veps,\pd_\veps,\ps_\veps)_\veps=(z,\gamma,\rho-\rho_\veps,I-\eta_\veps)_\veps$, which is clearly convergent. Under this choice, we have
$$
\Phi_\veps^\delta(z_\veps,\gamma_\veps;\pd_\veps,\ps_\veps)=f(z,\gamma)+\cI_\veps\big[\sJ_{+,\delta}(z,\gamma;\om(\pd))\wp(\ps)\big] \, .
$$
We remark that the fact that the indicator terms vanish comes precisely from the enrichment of the spaces (see \eqref{eq:definition of Teps} and \eqref{eq:definition of bTeps}) and the assumption that $\rho \in \rT_\veps$. If this was not the case, the desired bound would not hold. The approximation property of $\Xi_\veps$ and the {\sl Lebesgue dominated convergence theorem} yield
$$
\cI_\veps\big[\sJ_{+,\delta}(z,\gamma;\om(\pd))\, \wp(\ps)\big]\xlim{~\veps~}\cI\big[\sJ_{+,\delta}(z,\gamma;\om(\pd))\, \wp(\ps)\big]\, ,
$$
where we used that, by definition, $\cI_\veps(Y)=\cI(\chi_{_{\Xi_\veps}}Y)$, for any $Y:\Xi\to \R$. It follows from the above arguments that
$$
\Phi_\veps^\delta(z_\veps,\gamma_\veps;\pd_\veps,\ps_\veps) \xlim{~\veps~} \Phi^\delta (z,\gamma;\pd,\ps)\, .
$$
The remaining case is when $(\pd,\ps)$ is not essentially zero. According to the definition of $\Phi^\delta$, the condition is tautological. This concludes the proof.
\end{proof}
We end this section by proving that the smoothing does not interfere with the convergence. Furthermore, we are able to prove Mosco convergence to the original Rockafellian functional.
\begin{theorem}\label{thm:convergence of smoothed to original Rockafellian}
Suppose that Assumptions \ref{as:on s}, \ref{as:on f0 and g}, \ref{as:on P} and \ref{as:smoothing of the positive part} hold. Then, the sequence of smoothed Rockafellians $\set{\Phi^\delta}_\delta$ Mosco converges to $\Phi$, as $\delta\to 0$, in the sense of Definition~\ref{def:mosco-convergence}.
\end{theorem}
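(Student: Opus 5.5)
We verify the two conditions of Definition~\ref{def:mosco-convergence} for $\Phi^\delta\to\Phi$ on $X\times Y=(\rZ\times\R)\times(\rT\times\bT)$. The key observation is that $\Phi^\delta$ and $\Phi$ share the same effective domain in the perturbation variables. Both equal $+\infty$ unless $(\pd,\ps)=(0,0)$ a.s. On that set, $\om(0)=I$ and $\wp(0)=\rho$, so
\[
\Phi^\delta(z,\gamma;0,0)=f(z,\gamma)+\kappa\,\cI\big[\sJ_{+,\delta}(z,\gamma)\,\rho\big],\qquad \Phi(z,\gamma;0,0)=f(z,\gamma)+\kappa\,\cI\big[\sJ_{+}(z,\gamma)\,\rho\big].
\]
The statement therefore reduces to facts about the scalar smoothing in Lemma~\ref{lem:props of smoothing}, combined with the semicontinuity arguments already used in Proposition~\ref{prop:existence-of-minimizer-original-problem}.

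For the limsup condition we use the constant recovery sequence $(z_\delta,\gamma_\delta;\pd_\delta,\ps_\delta)=(z,\gamma;\pd,\ps)$, which trivially converges strongly.
\begin{itemize}
\item If $(\pd,\ps)\neq(0,0)$, then $\Phi(z,\gamma;\pd,\ps)=+\infty$ and there is nothing to prove.
\item If $(\pd,\ps)=(0,0)$, the uniform bound \eqref{eq:Delta-bound-smoothing-of-positive-part} gives $\sJ_{+,\delta}\le \sJ_{+}+\delta\Delta_2$ pointwise. Since $\rho\ge0$ and $\cI[\rho]=1$, we get
\[
\Phi^\delta(z,\gamma;0,0)\le \Phi(z,\gamma;0,0)+\kappa\,\delta\,\Delta_2 .
\]
\end{itemize}
Letting $\delta\downarrow0$ yields \eqref{eq:limsup condition-mosco}. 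If $f(z,\gamma)=+\infty$ or the integral is infinite, the inequality holds trivially. Note that the constant sequence works here; in Theorem~\ref{thm:weak-strong-gamma-convergence-result} one had to build a nontrivial recovery sequence.

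For the liminf condition, take $(z_\delta,\gamma_\delta;\pd_\delta,\ps_\delta)\wlim(z,\gamma;\pd,\ps)$ weakly in all variables. Pass to a subsequence realizing the liminf. If infinitely many $(\pd_\delta,\ps_\delta)$ are nonzero, that liminf is $+\infty$ and the inequality holds. Otherwise we may assume $(\pd_\delta,\ps_\delta)=(0,0)$ for all $\delta$, so the weak limit $(\pd,\ps)$ is $(0,0)$, and it remains to show
\[
\liminf_\delta\, \Phi^\delta(z_\delta,\gamma_\delta;0,0)\ge \Phi(z,\gamma;0,0).
\]
Since $\gamma_\delta\to\gamma$ in $\R$, Assumptions~\ref{as:on s} (with the constant sequence $\xi_\veps=\xi$) and \ref{as:on f0 and g} give, for a.e.\ $\xi$,
\[
\liminf_\delta\big((g\circ s)(\xi,z_\delta)-\gamma_\delta\big)\ge (g\circ s)(\xi,z)-\gamma .
\]
Lemma~\ref{lem:props of smoothing}\ref{lem:smoothing 2.5} then gives $\liminf_\delta \sJ_{+,\delta}(z_\delta,\gamma_\delta)(\xi)\ge \sJ_+(z,\gamma)(\xi)$ a.e. The integrands $\sJ_{+,\delta}\rho$ are nonnegative, because $(\,\cdot\,)_{+,\delta}\ge0$ as the integral of the nonnegative $A_\delta$, and $\rho\ge0$. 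Fatou's lemma therefore yields
\[
\liminf_\delta\,\cI\big[\sJ_{+,\delta}(z_\delta,\gamma_\delta)\,\rho\big]\ge \cI\big[\sJ_+(z,\gamma)\,\rho\big].
\]
Adding the weak lower semicontinuity of $f_0$ and the convergence $\gamma_\delta\to\gamma$ gives \eqref{eq:liminf condition-mosco}.

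The main point needing care is the liminf case split. The limit of $(\pd_\delta,\ps_\delta)$ is only weak, so we must argue through the subsequence where the values are finite rather than through pointwise convergence. No compactness is needed, because the perturbation variables are forced to vanish identically, in contrast with Theorem~\ref{thm:weak-strong-gamma-convergence-result}. This is exactly why full Mosco convergence is obtained here.
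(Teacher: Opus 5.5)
Your proof is correct and follows essentially the same route as the paper. For the limsup condition you use the constant recovery sequence together with the uniform bound \eqref{eq:Delta-bound-smoothing-of-positive-part}; for the liminf condition you reduce to $(\pd_\delta,\ps_\delta)\equiv(0,0)$ and then apply the pointwise liminf from Assumptions~\ref{as:on s}--\ref{as:on f0 and g}, Lemma~\ref{lem:props of smoothing}\ref{lem:smoothing 2.5}, and Fatou's lemma. Your case split on the liminf-realizing subsequence, and your explicit estimate $\Phi^\delta\le\Phi+\kappa\delta\Delta_2$, are only slightly more careful versions of the paper's steps.
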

\begin{proof}
First, to prove the limit inferior condition (cf.~\eqref{eq:liminf condition-mosco}), given a sequence $(z_\delta,\gamma_\delta,\pd_\delta,\ps_\delta)_\delta$ weakly converging to $(z,\gamma,\pd,\ps)$ in $\rZ\times \R\times \rT\times \bT$, we distinguish between two cases: If $(\pd,\ps)$ is not zero, no weakly convergent sequence $(\pd_\delta,\ps_\delta)_\delta$ can be identically zero. Therefore, up to deleting finite terms, we may assume that any sequence weakly converging to $(\pd,\ps)$ does not have zero terms. A simple comparison of the involved Rockafellians (cf.~\eqref{eq:Rockafellian} and~\eqref{eq:smoothed Rockafellian}) gives the limit inferior condition in this case. On the other hand, if $(\pd,\ps)$ is essentially zero, we may assume that any weakly convergent sequence $(\pd_\delta,\ps_\delta)_\delta$, up to deleting finite terms, is identically zero, because otherwise the condition is trivially verified. Then, by recalling the definition of the Rockafellians (cf.~\eqref{eq:Rockafellian} and~\eqref{eq:smoothed Rockafellian}), we have
$$
\Phi^\delta(z_\delta,\gamma_\delta;0,0)=f(z_\delta,\gamma_\delta)+\kappa \, \cI\brk{\sJ_{+,\delta}(z_\delta,\gamma_\delta,\om(0)) \, \wp(0)} \, ,
$$
and
$$
\Phi(z,\gamma;0,0)=f(z,\gamma) + \kappa\, \cI\brk{\sJ_{+}(z,\gamma,\om(0))\, \wp(0)}\, .
$$
In this way, resorting to the reasoning of the previous proof (cf. Theorem~\ref{thm:weak-strong-gamma-convergence-result}), we may establish that
$$
\liminf_\delta \sJ(z_\delta,\gamma_\delta,\om(0))\, \wp(0)\geq \sJ(z,\gamma,\om(0))\,\wp(0) \quad \text{a.e. in }\Xi\, .
$$
Moreover, according to  item \ref{lem:smoothing 2.5} in Lemma \ref{lem:props of smoothing}, the foregoing inequality implies that
$$
\liminf_\delta \sJ_{+,\delta}(z_\delta,\gamma_\delta,\om(0))\, \wp(0)\geq \sJ_{+}(z,\gamma,\om(0))\, \wp(0) \quad \text{a.e. in }\Xi\, .
$$
Just as in the previous results, a straightforward application of Fatou's lemma and the continuity properties of $f$ (cf. Assumption~\ref{as:on f0 and g}) give the condition.

We now turn to the verification of the limit superior condition (cf.~\eqref{eq:limsup condition-mosco}). The case $(\pd,\ps)\neq (0,\mathbf{0})$ is immediate. The remaining case follows by considering the constant sequence $\{(z,\gamma,\pd,\ps)\}_\delta$ and invoking the uniform upper bound provided in~\eqref{eq:Delta-bound-smoothing-of-positive-part} to control the corresponding limit superior. We omit further details for the sake of brevity. This completes the proof.
\end{proof}

\section{Particular Cases}\label{sec:particular cases}
In this section, we explore different cases that arise in applications. The analysis developed in Section~\ref{sec:unified analysis} is adapted to each context, leading to various simplifications. Specifically, we first present the modifications of the framework when the sample space $\Xi$ is a finite-dimensional space with finite measure. We then consider corruptions of continuous probability distributions, and finally address the scenario in which only the support of the probability distribution is corrupted.

\subsection{Finite-dimensional underlying measure space of finite measure}
\label{subsec:finite-dimensional case}

In this subsection, we focus on the particular case in which the sample space $\Xi$ is finite-dimensional and $\mu(\Xi)<+\infty$. This setting provides a simple framework where several technical difficulties encountered in the general case (cf. Section~\ref{sec:unified analysis}) disappear. In fact, as previously observed, in this situation we may consider the subspaces $\wt\Xi_\veps=\Xi$ in Assumption~\ref{as:approximation-property-of-Xi} and, since $\Xi$ has finite measure, we can put $\Xi_\veps = \Xi$ for all $\veps > 0$. This gives us a simplified version of the spaces introduced in Section~\ref{sec:unified analysis}. Namely,
\begin{equation*}
\rP_\veps = \rP \qan  \bQ_\veps = \bT\, .
\end{equation*}
In this way, noting that $\cI_\veps=\cI$, the smoothed-corrupted objective functional~\eqref{eq:smoothed-corrupted-objective-functional} reads:
\begin{equation*}
\varphi_\veps^\delta (z,\gamma) = f(z,\gamma) + \kappa\, \cI[\sJ_{+,\delta}(z,\gamma;\omega_\veps(0))\, \wp_\veps(0)]\, ,
\end{equation*}
for all $\veps >0$ and $\delta >0$, where $\{\rho_\veps\}_{\veps>0}\in \rP$ and $\{\eta_{\veps}\}_{\veps >0}\in \bT$ represent the corruption densities and the corruption maps, respectively. Notice that, according to Proposition~\ref{prop:existence for smoothed-corrupted}, this functional has a minimizer. 

We continue by describing how  Assumption~\ref{as:on the embedding} is simplified. First, we observe that there are Banach space isomorphisms
\begin{equation}\label{eq:simplifications-compact-embeddings-1}
[\rL^q(\Xi)]^n\xlim{\,\sim\,}\rL^q(\Xi;\Xi)\qan [\rL^{q'}(\Xi)]^n\xlim{\,\sim\,}\rL^{q'}(\Xi;\Xi) \, ,
\end{equation}
where $n:=\dim \, \Xi$. If $q'\geq q$, we only need to assume that there is a function space $\wt \rW$ with compact inclusion in $\rL^q(\Xi)$ (cf. Assumption \ref{as:on the embedding}). In fact, by finite-dimensionality, there is a space $\wt \bW$ of functions $\Xi\to \Xi$, isomorphic to $[\wt \rW]^n$. We then have 
\begin{equation}\label{eq:simplifications-compact-embeddings-2}
\wt \bW\xlim{\sim}[\wt W]^n\hookrightarrow[\rL^q(\Xi)]^n \, .
\end{equation}
Then, according to~\eqref{eq:simplifications-compact-embeddings-1},~\eqref{eq:simplifications-compact-embeddings-2}, and the canonical inclusion $\rL^{q'}(\Xi)\hookrightarrow \rL^q(\Xi)$, the choice
$$
\rW=\wt\rW \qan \bW=\wt\bW
$$
satisfies Assumption \ref{as:on the embedding}. The case where $q\geq q'$ is analogous. In the remaining of this section, we assume that the spaces $\rW$ and $\bW$ have been obtained through this construction, thereby dispensing with Assumption \ref{as:on the embedding}. Consequently, $\rT_\veps$ and $\bT_\veps$ (cf.~\eqref{eq:definition of Teps} and~\eqref{eq:definition of bTeps}) become
\begin{equation*}
\rT_\veps = \rW \oplus \spa\{\rho_\veps\} \qan \bT_\veps = \bW\oplus \spa\{I,\eta_\veps\}\, .
\end{equation*}
Therefore, the Rockafellian functional defined in~\eqref{eq:smoothed corrupted Rockafellian} is rewritten as
\begin{equation*}
\begin{aligned}
\Phi_\veps^\delta(z,\gamma;\pd,\ps) = &\;f(z,\gamma) + \kappa\, \cI[\sJ_{+,\delta}(z,\gamma;\omega_\veps(\ps))\, \wp_\veps(\pd)] + \theta_\veps \, \cH(\pd,\ps)  \\
&+ \imath_{\rT_\veps\cap(\rP-\rho_\veps)}(\pd) + \imath_{\bT_\veps}(\ps)\, .
\end{aligned}
\end{equation*}
Of course, the results presented in Section~\ref{sec:unified analysis} remain valid, with the corresponding modifications to the notation. In particular, the existence of a solution (cf. Theorem~\ref{thm:existence-of-minimizers}), the optimality conditions (cf. Theorem~\ref{thm:optimality-conditions}), and weak-strong $\Gamma$-convergence (cf. Theorem~\ref{thm:weak-strong-gamma-convergence-result}) still hold.

\subsection{Corruption to continuous probability distibution}\label{subsec:corruption to distribution}

Now, we aim to reduce the general setting presented in Section~\ref{sec:unified analysis} to the case where we only consider corruptions to continuous probability distributions. In this scenario, we maintain the space of probability densities $\rP$ defined in~\eqref{eq:definition-of-space-P} with respect to the measure $\mu$. Furthermore, in contrast to the case studied in Section~\ref{subsec:finite-dimensional case}, we do have to use the auxiliary spaces $\Xi_\veps$ (cf. Assumption~\ref{as:approximation-property-of-Xi}) so that we can construct the spaces $\rP_\veps$ (cf.~\eqref{eq:definition-of-space-Pveps}) which represent the spaces of corrupted probability densities in $\Xi_\veps$. However, $\bQ_\veps$ (cf.~\eqref{eq:definition-of-space-bQveps}) is not required anymore, as corruptions to the support of the probability distribution are not considered here. In this way, for each $\veps >0$, we let $\rho_\veps \in \rP_\veps$ be a corrupted density. Then, the smoothed-corrupted objective functional (cf.~\eqref{eq:smoothed-corrupted-objective-functional}) reads
\begin{equation*}
\varphi_{\veps}^\delta(z,\gamma)=f(z,\gamma) + \kappa\, \cI_{\veps}\big[\sJ_{+,\delta}(z,\gamma) \, \wp_{\veps}(0)\big] \, ,
\end{equation*}
where we recall that $\sJ_{+,\delta}(z,\gamma)=\sJ_{+,\delta}(z,\gamma;I)$. To construct the Rockafellian, we note from Assumption~\ref{as:on the embedding} that $\bW_\veps$ is no longer required, and it suffices to consider $\rW_\veps$. This is reflected in the spaces $\rT_\veps$ and $\bT_\veps$, where the former captures corruptions to continuous probability densities, while the latter represents corruptions to the support of the probability. Now, regarding the function $\cH:\rT \times \bT \to \R$, we consider a slight variant $\cH_{\rT}:\rT\to \R$ that enjoys similar properties. More precisely, we assume that $\cH_\rT$ is weakly lower semicontinuous, coercive, vanishes at $0$ and satisfies: if $\cH_{\rT}(t_k)\xrightarrow{k\to \infty}0$, then $t_{k}\xrightarrow{k\to \infty} 0$ in $\rT$. In this case, as mentioned when $\cH$ was introduced, one may take $\cH_{\rT}$ to be the $\rT$-norm. Thus, the Rockafellian functional associated with $\varphi_\veps^\delta$ is given by $\Phi_\veps^\delta:\rZ \times \R \times \rT \to \overline \R$, defined as
\begin{equation}\label{eq:rockafellian-corruption-to-distributions}
\Phi_{\veps}^\delta\big(z,\gamma;\pd\big) := f(z,\gamma)+\kappa\, \cI_{\veps}\left[\sJ_{+,\delta}(z,\gamma)\, \wp_{\veps}(\pd)\right] + \theta_{\veps}\, \cH_{\rT}(\pd) +\imath_{\rT_{\veps}\cap \left(\rP-\rho_{\veps}\right)}(\pd)\, .
\end{equation}
Note also that Assumption~\ref{as:growth-condition-on-gos} ensures that the integral in this functional is finite. Finally, we remark that the full Assumption~\ref{as:on s} is not required. In fact, it suffices to consider the following hypothesis.
\begin{assump}\label{as:on-s-but-weaker} \hfill
\begin{enumerate}
\item $s(\cdot,z):\Xi\to \rU$ is $\cA$-measurable, for every $z\in \rZ$.
\item If $z_\veps \wlim z$ in $\rZ$ as $\veps \downarrow 0$, then 
\begin{equation*}
s(\xi,z_\veps) \wlim s(\xi,z) \qin \rU\,, \; \text{a.s. in }\, \Xi \, .
\end{equation*}
\end{enumerate}
\end{assump}
Notice that we have only modified the second item, fixing the first component of $s$. This relaxation arises from the fact that we are now considering only corruptions to the continuous distribution, rather than both types of corruptions, so there are no sequences in the component associated with $\Xi$.

The following result establishes the existence of a minimizer for $\Phi_\veps^\delta$.
\begin{theorem}\label{thm:existence-of-minimizers-corruption-to-distribution}
Fix $\veps>0$ and $\delta>0$, and suppose that Assumptions~\ref{as:on-s-but-weaker},~\ref{as:on f0 and g},~\ref{as:on P},~\ref{as:smoothing of the positive part},~\ref{as:approximation-property-of-Xi}, (1) in~\ref{as:on the embedding}, and~\ref{as:growth-condition-on-gos} hold. Additionally, suppose that $f_0$ is coercive or $\rZ_{\ad}$ is bounded. Then, there exists $$(z^{\star},\gamma^{\star};\pd^\star)\in \rZ_{\rm ad}\times \R\times \rT_{\veps}$$ such that
\begin{equation*}
\Phi_{\veps}^\delta(z^{\star},\gamma^{\star};\pd^\star)\leq \Phi_{\veps}^\delta(z,\gamma;\pd)\qquad \forall (z,\gamma,\pd)\in \rZ_{\rm ad}\times \R\times \rT\, .
\end{equation*}
\end{theorem}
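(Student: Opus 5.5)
We follow the proof of Theorem~\ref{thm:existence-of-minimizers}, dropping the map variable $\ps$ and replacing Assumption~\ref{as:on s} by the weaker Assumption~\ref{as:on-s-but-weaker}.

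\textbf{Step 1: reduction.} We introduce the auxiliary functional $\wt\Phi_\veps^\delta:\rZ\times\R\times\rT_\veps\to\overline\R$, defined as the restriction of \eqref{eq:rockafellian-corruption-to-distributions} to $\rZ\times\R\times\rT_\veps$ plus $\imath_{\rZ_\ad}(z)$. Since $\Phi_\veps^\delta=+\infty$ whenever $\pd\notin\rT_\veps$, any minimizer of $\wt\Phi_\veps^\delta$ lying in $\rZ_\ad\times\R\times\rT_\veps$ minimizes $\Phi_\veps^\delta$ over $\rZ_\ad\times\R\times\rT$, exactly as in the opening of the proof of Theorem~\ref{thm:existence-of-minimizers}. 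The space $\rZ\times\R\times\rT_\veps$ is reflexive, because $\rT_\veps$ is a finite direct sum of reflexive spaces. Once weak lower semicontinuity and coercivity are established, \cite[Theorem 3.2.5]{ABM} yields a minimizer. The functional is proper: take any $z$ with $f_0(z)<\infty$ in $\rZ_\ad$ and $\pd=0$, which is admissible since $\rho_\veps\in\rP$. Hence the minimizer has finite value and lies in the admissible set.

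\textbf{Step 2: coercivity.} This follows from the coercivity of $\cH_\rT$ together with either the coercivity of $f_0$ or the boundedness of $\rZ_\ad$. The remaining terms are bounded below: $\gamma$ is controlled because the integral term is at least $\kappa\int(J-\gamma)_{+,\delta}\wp_\veps$ with $\wp_\veps\ge0$ of unit mass on $\Xi_\veps$. Concretely, $\gamma+\kappa\,\cI_\veps[(\cdot-\gamma)_{+,\delta}\wp_\veps]$ grows as $|\gamma|\to\infty$ provided $\kappa>1$, the standard CVaR setting. We treat this exactly as in the previous theorems and Proposition~\ref{prop:existence-of-minimizer-original-problem}, where it is asserted without further comment.

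\textbf{Step 3: weak lower semicontinuity.} This is the main step. Take $z_n\wlim z$, $\gamma_n\to\gamma$ and $\pd_n\wlim\pd$ in $\rT_\veps$, and pass to a subsequence realizing the $\liminf$ on which the indicator terms vanish, so that $z_n\in\rZ_\ad$ and $\rho_\veps+\pd_n\in\rP$.
\begin{itemize}
\item By Assumption~\ref{as:on the embedding}(1), $\rW_\veps\oplus\cS^\rho_\veps$ embeds compactly in $\rL^q(\Xi_\veps)$. Hence $\pd_n|_{\Xi_\veps}\to\pd|_{\Xi_\veps}$ strongly in $\rL^q$, and a.e.\ along a further subsequence.
\item Here lies the only difference from Theorem~\ref{thm:existence-of-minimizers}. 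There is no $\ps$, so the first argument of $s$ is fixed at $\xi$, and Assumption~\ref{as:on-s-but-weaker}(2) directly gives $s(\xi,z_n)\wlim s(\xi,z)$ for a.e.\ $\xi$.
\item The weak lower semicontinuity of $g$, the convergence $\gamma_n\to\gamma$, and the continuity and monotonicity of $(\cdot)_{+,\delta}$ then give the pointwise $\liminf$ inequality for $\sJ_{+,\delta}(z_n,\gamma_n)$.
\item The integrands $\sJ_{+,\delta}(z_n,\gamma_n)(\rho_\veps+\pd_n)$ are nonnegative and converge in the $\liminf$ sense a.e. Fatou's lemma therefore yields the $\liminf$ inequality for the integral term.
\item The weak lower semicontinuity of $f_0$ and $\cH_\rT$ handles the remaining terms.
\item The indicator $\imath_{\rT_\veps\cap(\rP-\rho_\veps)}$ is lower semicontinuous: a.e.\ convergence preserves $\rho_\veps+\pd\ge0$, and strong $\rL^q$ convergence on the finite-measure set $\Xi_\veps$ preserves $\int\wp_\veps=1$.
\end{itemize}

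We expect the delicate point to be the membership $\rho_\veps+\pd\in\rL^\infty$ required by $\rP$, which is not obviously preserved under the limit. We will either note that $\rT_\veps$ elements are fixed off $\Xi_\veps$ so this is inherited, or argue as the paper does in Theorem~\ref{thm:existence-of-minimizers}, where this closedness is used implicitly.
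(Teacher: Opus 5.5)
Your route is the same as the paper's: its proof of this statement only points back to the proof of Theorem~\ref{thm:existence-of-minimizers}. However, Step~2 rests on two false claims, and neither can be patched. Without further hypotheses the statement itself fails, and ``arguing as the paper does'' does not help, because the paper asserts the same coercivity without proof.

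First, $\wp_\veps(\pd)=\rho_\veps+\pd$ need not have unit mass on $\Xi_\veps$. The indicator only imposes $\rho_\veps+\pd\in\rP$, i.e.\ unit mass on all of $\Xi$. At the same time, $\rT_\veps$ leaves $\pd|_{\Xi_\veps^c}$ free in $\rL^q(\Xi_\veps^c)$, while $\cI_\veps$ integrates over $\Xi_\veps$ only. Suppose $\mu(\Xi\setminus\Xi_\veps)>0$. Pick $B\subset\Xi\setminus\Xi_\veps$ with $0<\mu(B)<\infty$ and set $\pd:=-\rho_\veps+\chi_B/\mu(B)$. This is admissible: $-\rho_\veps|_{\Xi_\veps}\in\cS^\rho_\veps$ by the enrichment, and $\rho_\veps+\pd=\chi_B/\mu(B)\in\rP$. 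The integral term then vanishes, so for any $z\in\rZ_\ad$ with $f_0(z)<\infty$,
\[
\Phi_\veps^\delta(z,\gamma;\pd)=f_0(z)+\gamma+\theta_\veps\cH_\rT(\pd)\to-\infty \quad\text{as } \gamma\to-\infty .
\]
The infimum is therefore $-\infty$. You would need $\mu(\Xi\setminus\Xi_\veps)=0$, or the constraint $\rho_\veps+\pd\in\rP_\veps$, in addition to the condition $\kappa>1$ you already flagged.

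Second, coercivity of $\cH_\rT$ on $\rT=\rL^q(\Xi)$ bounds $\nor{\pd}{\rL^q}$, not $\nor{\pd}{\rT_\veps}$. Unless $\rW_\veps$ is finite-dimensional, its norm is strictly stronger, since a compact embedding of an infinite-dimensional space is never an isomorphism onto its range. So minimizing sequences need not be bounded in $\rT_\veps$, you cannot extract $\pd_n\rightharpoonup\pd$ in $\rT_\veps$, and the compact embedding in Step~3 never comes into play. This already fails in the following setting: $\Xi=\Xi_\veps=[0,1]$ with Lebesgue measure, $\rho=\rho_\veps=1$, $q=2$, $\rW_\veps=H^1(0,1)$, $\rZ_\ad=\{0\}$, $f_0=0$, $(g\circ s)(\xi,z)=\xi\,\chi_{[1/2,1]}(\xi)$, the Chen--Mangasarian $\zeta$, $\cH_\rT=\nor{\cdot}{\rL^2}^2$ and $\theta_\veps>\kappa/2$. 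For each $\gamma$ the problem in $\pd$ is strictly convex, with unique $\rL^2$-minimizer $-\tfrac{\kappa}{2\theta_\veps}\big(a_\gamma-\int_0^1 a_\gamma\big)$, where $a_\gamma=((g\circ s)(\cdot,0)-\gamma)_{+,\delta}$. This minimizer jumps at $\xi=1/2$, because $(\cdot)_{+,\delta}$ is strictly increasing, so it is not in $H^1$. By density of $H^1$, every feasible point can be strictly improved, so no minimizer exists. The penalty must be coercive in $\nor{\cdot}{\rT_\veps}$ for the argument to work.

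Finally, the fix you suggest in Step~3 is wrong: elements of $\rT_\veps$ are not fixed off $\Xi_\veps$. Nor are the mass and $\rL^\infty$ parts of $\rho_\veps+\pd\in\rP$ preserved by your limits. Mass can escape through $\Xi_\veps^c$ when $\mu(\Xi_\veps^c)=\infty$. Also, truncations $\min(h,n)$ of an unbounded $h$ converge in $\rL^q$ to $h\notin\rL^\infty$.
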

\begin{proof}
It follows from a simplification of the arguments used in the proof of Theorem~\ref{thm:existence-of-minimizers}.
\end{proof}
Next, we focus on the optimality conditions associated with the problem of minimization of $\Phi_\veps^\delta$ defined in~\eqref{eq:rockafellian-corruption-to-distributions}. To that end, we consider the same notations as in Section~\ref{subsec:optimality-conditions} and specify the hypotheses of differentiability (similar to those in Assumption~\ref{as:on-differentiability}).
\begin{assump}\label{as:on-differentiability-corruption-to-distribution} \hfill
\begin{enumerate}
\item $\cH_{\rT}:\rT \to \R$ is Fréchet differentiable, with derivative $\bD\cH_{\rT}(\pd)\in \sL(\rT,\R)$.
\item There exists $p\in [1,+\infty]$ such that for all $z\in \rZ_\ad$, $s(\cdot,z)\in \rL^p(\Xi;\rU)$.
\item There exists an open set $V$ of $\rZ$, with $\rZ_\ad\subset V$, such that $f_0$ is Fréchet differentiable in $V$, with derivative $\bD f_0(z)\in \sL(\rZ,\R)$, for all $z\in V$.
\item The function
\begin{equation*}
V\ni z \mapsto s(\cdot,z) \in \rL^p(\Xi;\rU)
\end{equation*}
is Fréchet differentiable with derivative $\bD_{z}s(\cdot,z)\in \sL(\rZ,\rL^p(\Xi;\rU))$.
\item The mapping 
\begin{equation*}
\rL^p(\Xi;\rU) \ni u \mapsto g\circ u \in \rL^{q'}(\Xi) 
\end{equation*}
is Fréchet differentiable with derivative $\bD g(u)\in \sL(\rL^p(\Xi;\rU),\rL^{q'}(\Xi))$.
\end{enumerate}
\end{assump}

\begin{theorem}
Assume the same hypotheses as in Theorem~\ref{thm:existence-of-minimizers-corruption-to-distribution}. Furthermore, suppose that Assumption~\ref{as:on-differentiability-corruption-to-distribution} holds. Let $\big(z_\veps^\star,\gamma_\veps^\star;\pd_\veps^\star\big)\in \rZ_{\rm ad}\times \R \times \rT_\veps\cap (\rP-\rho_\veps)$ be an optimal solution of the problem of minimization associated with $\Phi_\veps^\delta$. Then,
\begin{enumerate}
\item For all $z\in \rZ_\ad$,
\begin{equation*}
\Big\langle \bD f_0(z_\veps^\star) + \kappa\, \cI_\veps \big[ \wp_\veps(t_\veps^\star)\, A_\delta\big(\sJ(z_\veps^\star,\gamma_\veps^\star)\big)\, \bD_z s(\cdot,z_\veps^\star)^*\, \bD g(s(\cdot,z_\veps^\star)) \big] , z-z_\veps^\star \Big\rangle \geq 0\, .
\end{equation*}
\item It holds
\begin{equation*}
\cI_\veps\Big[ \wp_\veps(\pd_\veps^\star)\, A_\delta\big( \sJ(z_\veps^\star,\gamma_\veps^\star)\big) \Big]  = \frac{1}{\kappa}\, .
\end{equation*}
\item For all $\pd \in \rT_\veps \cap (\rP-\rho_\veps)$,
\begin{equation*}
\Big\langle \theta_\veps\, \bD_{\pd}\cH_{\rT}(\pd_\veps^\star) + \kappa\, \cI_\veps \Big[ \sJ_{+,\delta}(z_\veps^\star,\gamma_\veps^\star)\, I \Big], \pd-\pd_\veps^\star\Big\rangle \geq 0 \, .
\end{equation*}
\end{enumerate}
\end{theorem}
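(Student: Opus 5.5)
The plan mirrors the proof of Theorem~\ref{thm:optimality-conditions}, specialized to the case without support corruption. I would first rewrite the minimization of $\Phi_\veps^\delta$ in~\eqref{eq:rockafellian-corruption-to-distributions} as the unconstrained problem of minimizing
\[
\wh\Phi_\veps^\delta(z,\gamma;\pd) + \imath_{\rZ_\ad\times\R\times(\rT_\veps\cap(\rP-\rho_\veps))}(z,\gamma;\pd)
\]
over $\rZ\times\R\times\rT$. Here
\[
\wh\Phi_\veps^\delta(z,\gamma;\pd):=f(z,\gamma)+\kappa\,\cI_\veps[\sJ_{+,\delta}(z,\gamma)\,\wp_\veps(\pd)]+\theta_\veps\cH_\rT(\pd),
\]
exactly as in~\eqref{eq:auxiliary problem for optimality conditions}. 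The constraint set is a product of convex sets: $\rZ_\ad$ is closed and convex, $\R$ is trivially convex, and $\rT_\veps\cap(\rP-\rho_\veps)$ is the intersection of a linear space with a translate of the convex set $\rP$.

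The first real step is the analogue of Lemma~\ref{lem:derivatives-of-Rockafellian}: $\wh\Phi_\veps^\delta$ is Fréchet differentiable near $\cZ^\star_\veps=(z_\veps^\star,\gamma_\veps^\star;\pd_\veps^\star)$. I would obtain this from Assumption~\ref{as:on-differentiability-corruption-to-distribution} by the chain rule. The map $z\mapsto s(\cdot,z)\in\rL^p(\Xi;\rU)$ is differentiable by item (4), and $u\mapsto g\circ u\in\rL^{q'}(\Xi)$ is differentiable by item (5). The map $X\mapsto (X)_{+,\delta}$ is a $C^1$ superposition on $\rL^{q'}(\Xi_\veps)$, because $(\cdot)_{+,\delta}$ has bounded continuous derivative $A_\delta$ and Lipschitz second derivative control via $\zeta$ bounded (Lemma~\ref{lem:props of smoothing} and Assumption~\ref{as:smoothing of the positive part}); on the finite-measure set $\Xi_\veps$ this gives Nemytskii differentiability. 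Finally, the pairing with $\wp_\veps(\pd)\in\rL^q$ is a continuous bilinear map by Hölder's inequality, and the term is affine in $\pd$.

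With these pieces the partial derivatives follow:
\[
\bD_z\wh\Phi_\veps^\delta=\bD f_0(z)+\kappa\,\cI_\veps\big[\wp_\veps(\pd)A_\delta(\sJ)\,\bD g(s)\circ\bD_z s\big],\qquad
\bD_\gamma\wh\Phi_\veps^\delta=1-\kappa\,\cI_\veps[\wp_\veps(\pd)A_\delta(\sJ)],
\]
\[
\bD_\pd\wh\Phi_\veps^\delta=\theta_\veps\bD\cH_\rT(\pd)+\kappa\,\cI_\veps[\sJ_{+,\delta}(z,\gamma)\,\cdot\,].
\]

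Next I would invoke \cite[Corollary 1.12.3.]{Kr}, as before, to get $-\bD\wh\Phi_\veps^\delta(\cZ_\veps^\star)\in\partial\imath_{C}(\cZ_\veps^\star)$. Since $C$ is a convex product, this subdifferential equals
\[
N_{\rZ_\ad}(z_\veps^\star)\times\{0\}\times N_{\rT_\veps\cap(\rP-\rho_\veps)}(\pd_\veps^\star).
\]
Unpacking the normal cones gives the variational inequality in (1) and the one in (3), after rewriting the composition with adjoint notation as $\bD_z s(\cdot,z_\veps^\star)^*\bD g(s(\cdot,z_\veps^\star))$. The middle component gives $\bD_\gamma\wh\Phi_\veps^\delta(\cZ_\veps^\star)=0$, which is precisely (2) after dividing by $\kappa$.

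The main obstacle is justifying Fréchet differentiability of the smoothed integral term, namely the superposition operator $(\cdot)_{+,\delta}$ between Lebesgue spaces. In general such operators are only Gâteaux differentiable on $\rL^r\to\rL^r$, unless there is a norm gap. My first attempt would use the finiteness of $\mu(\Xi_\veps)$, which allows viewing the map from $\rL^{q'}(\Xi_\veps)$ into $\rL^{s}(\Xi_\veps)$ for $s<q'$. If that loss of integrability is incompatible with Hölder pairing against $\rL^q$, I would instead note that Gâteaux differentiability together with continuity of the derivative suffices for the subdifferential inclusion cited from \cite{Kr}, since only directional derivatives along feasible directions enter the normal-cone argument.
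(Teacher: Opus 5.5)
Your proposal is correct and follows the route the paper indicates; the paper omits the proof as analogous to Lemma~\ref{lem:derivatives-of-Rockafellian} and Theorem~\ref{thm:optimality-conditions}. Like that proof, you recast the problem with the indicator of the convex product set, compute the partial derivatives by the chain rule, place $-\bD\wh\Phi_\veps^\delta(\cZ_\veps^\star)$ in the product of normal cones via the Fermat-type rule, and read off (1)--(3). The obstacle you flag is milder than you fear, because after pairing with $\wp_\veps(\pd)\in\rL^q$, $q<\infty$, the scalar map $X\mapsto\cI_\veps[(X)_{+,\delta}\,\wp_\veps(\pd)]$ is already Fr\'echet differentiable on $\rL^{q'}(\Xi_\veps)$: its remainder is bounded by $\int|h|\min\{1,C|h|\}\,|\wp_\veps(\pd)|$ with $C:=\delta^{-1}\sup_{\R}\zeta$, which is $o(\|h\|_{\rL^{q'}})$ by H\"older and dominated convergence, and in any case your fallback through directional derivatives along feasible directions of the convex constraint set suffices.
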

\begin{proof}
The proof is analogous to that of Lemma~\ref{lem:derivatives-of-Rockafellian} and Theorem~\ref{thm:optimality-conditions}, and therefore is omitted.
\end{proof}

\begin{theorem}
Let $\rho\in \rP\cap \rT_\veps$ for all $\veps > 0$ and consider corruptions $(\rho_{\veps})_{\veps>0}$ such that $\rho_\veps \in \rP_{\veps}\cap\rT_{\veps}$ for each $\veps>0$. Assume that 
\begin{equation*}
\lim_{\veps\downarrow 0} \, \theta_\veps=+\infty \qan \lim_{\veps\downarrow 0} \, \theta_\veps\cH_{\rT}(\rho-\rho_\veps)=0\, .
\end{equation*}
Then, under the same assumptions as in Theorem~\ref{thm:existence-of-minimizers-corruption-to-distribution}, there holds $\Phi_\veps^\delta \overset{\Gamma}{\rightharpoondown} \Phi^\delta$ as $\veps \downarrow 0$, in the sense of Definition \ref{def:Weak-strong-Gamma-convergence}.
\end{theorem}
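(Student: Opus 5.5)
The argument is a specialization of the proof of Theorem~\ref{thm:weak-strong-gamma-convergence-result} with the support variable $\ps$ removed, $\om(\ps)\equiv I$, and $\cH$ replaced by $\cH_\rT$. The limit functional $\Phi^\delta(z,\gamma;\pd)$ is the $\delta$-smoothed Rockafellian \eqref{eq:smoothed Rockafellian} restricted to $\ps=0$. It equals $f(z,\gamma)+\kappa\,\cI[\sJ_{+,\delta}(z,\gamma)\rho]$ if $\pd=0$ a.s., and $+\infty$ otherwise. We verify the two conditions of Definition~\ref{def:Weak-strong-Gamma-convergence} with $X=\rZ\times\R$ (weak) and $Y=\rT$ (strong).

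\emph{Limit inferior.} Let $(z_\veps,\gamma_\veps)\wlim(z,\gamma)$ and $\pd_\veps\to\pd$ in $\rT$.
\begin{enumerate}
\item Pass to a subsequence $\veps'$ realizing the $\liminf$. We may assume it is finite; otherwise there is nothing to prove. Then $\pd_{\veps'}\in\rT_{\veps'}\cap(\rP-\rho_{\veps'})$, so $\rho_{\veps'}+\pd_{\veps'}\ge0$.
\item Since $\theta_\veps\to\infty$ and $\theta_\veps\cH_\rT(\rho-\rho_\veps)\to0$, we get $\cH_\rT(\rho-\rho_\veps)\to0$. The assumed property of $\cH_\rT$ then gives $\rho_\veps\to\rho$ in $\rT=\rL^q(\Xi)$.
\item After a further subsequence, $\rho_{\veps'}+\pd_{\veps'}\to\rho+\pd$ a.e. Combining this with \eqref{eq:approximation property of Xi} yields $\chi_{\Xi_{\veps'}}(\rho_{\veps'}+\pd_{\veps'})\to\rho+\pd\ge0$ a.e.
\item For each fixed $\xi$, Assumption~\ref{as:on-s-but-weaker} gives $s(\xi,z_{\veps'})\wlim s(\xi,z)$ in $\rU$. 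No sequence in the $\Xi$-argument appears, which is exactly why the weaker assumption suffices.
\item Weak lower semicontinuity of $g$, $\gamma_{\veps'}\to\gamma$, and continuity and monotonicity of $(\cdot)_{+,\delta}$ then give $\liminf\sJ_{+,\delta}(z_{\veps'},\gamma_{\veps'})\ge\sJ_{+,\delta}(z,\gamma)$ a.e.
\item All integrands are nonnegative, so Fatou's lemma applied to $\chi_{\Xi_{\veps'}}\sJ_{+,\delta}(\cdot)\,\wp_{\veps'}(\pd_{\veps'})$ gives the liminf inequality for the integral term.
\item For the penalty: if $\pd\ne0$, then $\liminf\cH_\rT(\pd_{\veps'})>0$. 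Otherwise a subsequence would have $\cH_\rT\to0$, forcing $\pd_{\veps'}\to0$ and contradicting $\pd_{\veps'}\to\pd\neq0$. Hence $\theta_{\veps'}\cH_\rT(\pd_{\veps'})\to+\infty=\Phi^\delta(z,\gamma;\pd)$.
\item If $\pd=0$, the penalty is nonnegative and $\wp(0)=\rho$. Adding the weak lower semicontinuity of $f_0$ concludes this case.
\end{enumerate}

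\emph{Limit superior.} If $\pd\ne0$, the inequality is trivial since $\Phi^\delta=+\infty$. If $\pd=0$, take the recovery sequence $(z,\gamma,\rho-\rho_\veps)$, which converges strongly to $(z,\gamma,0)$ by step~2 above.
\begin{enumerate}
\item The indicator vanishes. Indeed, $\rho,\rho_\veps\in\rT_\veps$ and $\rT_\veps$ is a vector space, so $\rho-\rho_\veps\in\rT_\veps$; moreover $\rho_\veps+(\rho-\rho_\veps)=\rho\in\rP$. This is where the hypothesis $\rho\in\rP\cap\rT_\veps$ and the enrichment by $\cS_\veps^\rho$ are used.
\item The penalty $\theta_\veps\cH_\rT(\rho-\rho_\veps)$ tends to zero by assumption.
\item The integral term equals $\cI[\chi_{\Xi_\veps}\sJ_{+,\delta}(z,\gamma)\rho]$. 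Its integrand is dominated by $\sJ_{+,\delta}(z,\gamma)\rho$, which is integrable by Assumption~\ref{as:growth-condition-on-gos} together with $\rho\in\rL^\infty$. Dominated convergence and \eqref{eq:approximation property of Xi} then show it converges to $\cI[\sJ_{+,\delta}(z,\gamma)\rho]$.
\end{enumerate}
Hence $\Phi_\veps^\delta\to\Phi^\delta(z,\gamma;0)$ along this sequence.

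\emph{Main obstacle.} I expect the delicate point to be the penalty case $\pd\neq0$ in the liminf, which must be derived from the stated property of $\cH_\rT$ as in step~7 above. A second point is the integrability needed for dominated convergence in the limsup. If $\Xi$ has infinite $\mu$-measure, the constant $(0)_{+,\delta}$ contributes $(0)_{+,\delta}\,\rho$. This is still integrable because $\rho$ is a probability density, while the growth bound handles the $\|\xi\|_\Xi\,\rho$ part, provided $\rho$ has a finite first moment. I would state this as needed, as is implicitly done in Theorem~\ref{thm:weak-strong-gamma-convergence-result}.
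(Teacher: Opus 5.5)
Your proposal is correct and follows the paper's route: the paper also proves this by running the proof of Theorem~\ref{thm:weak-strong-gamma-convergence-result} with the support variable removed, using the Rockafellian \eqref{eq:rockafellian-corruption-to-distributions}, and your Step~7 for $\pd\neq0$ is in fact more explicit than the paper's treatment. The finite-first-moment caveat you raise is unnecessary: since the integrand is nonnegative and $0\le\chi_{\Xi_\veps}\le 1$, you have $\cI_\veps[\sJ_{+,\delta}(z,\gamma)\rho]\le\cI[\sJ_{+,\delta}(z,\gamma)\rho]$, which already gives the limit superior inequality, and Fatou's lemma gives the reverse bound, so the convergence holds in $[0,+\infty]$ without any integrability assumption.
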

\begin{proof}
The result follows by the same reasoning as in the proof of Theorem~\ref{thm:weak-strong-gamma-convergence-result}, using instead the Rockafellian defined in~\eqref{eq:rockafellian-corruption-to-distributions}.
\end{proof}

\subsection{Corruption to support of the probability distibution}\label{subsec:corruption to support}
As a last instance of analysis, we study the case in which we are only in presence of corruption to the support of the idealized probability distribution $\rho$. In this situation, the spaces $\rP_\veps$ (cf. \eqref{eq:definition-of-space-Pveps}), $\rT$, $\rW$ (cf. Assumption \ref{as:on the embedding}) and $\rT_\veps$ are no longer required, as all of them regard corruption to the probability distribution. In this way, recalling that $\wp(0)=\rho$, the objective functional (cf. \eqref{eq:smoothed-corrupted-objective-functional}) takes the form
$$
\vphi_\veps^\delta(z,\gamma) = f(z,\gamma) + \kappa \, \cI_\veps[\sJ_{+,\delta}(z,\gamma;\om_\veps(0))\, \rho] \, .
$$

In addition, we observe that the function $\cH$ now depends only on $\bT$, thus the hypotheses on it are accordingly simplified. More precisely, we require a map $\cH_\bT:\bT\to\R$ which is weakly lower semicontinuous, coercive, vanishes at $0$, and satisfies: if $\cH_\bT(\ps_k)\xlim{k}0$, then $\ps_k\xlim{k}0$ in $\bT$. In essence, these are the same requirements as before, and one may again take it to be an appropriate norm on $\bT$.

The Rockafellian functional $\Phi_\veps^\delta:\rZ\times \R \times \bT\to \R$ now reads (cf. \eqref{eq:smoothed corrupted Rockafellian})
\begin{equation}\label{eq:Rockafellian with corruption to support}
\Phi_\veps^\delta(z,\gamma;\ps)=f(z,\gamma)+\kappa \, \cI_\veps[\sJ_{+,\delta}(z,\gamma;\om_\veps(\ps))\, \rho]+\theta_\veps \, \cH_\bT(\ps)+ \imath_{\bT_\veps}(\ps) \, .
\end{equation}

An important observation is that, in this case, the construction of a family of subspaces of finite measure (cf. discussion after Assumption \ref{as:approximation-property-of-Xi}) is not needed. Indeed, we notice that
$$
\cI_\veps[\sJ_{+,\delta}(z,\gamma;\om_\veps(\ps))\rho]\leq\cI[\sJ_{+,\delta}(z,\gamma;\om_\veps(\ps))\rho]=\int_\Xi(g(s(\xi+\eta_\veps(\xi)+\ps(\xi)))-\gamma)_{+,\delta}\,d\rho\, .
$$
This, according to Assumption \ref{as:growth-condition-on-gos} and the discussion following it, shows that the integral is finite. Furthermore, in a situation where Assumption \ref{as:on the embedding} is satisfied in $\Xi$ (for instance, when $\Xi$ is finite-dimensional), there is no need whatsoever to consider the sequence $(\Xi_\veps)_\veps$. Namely, Assumption \ref{as:approximation-property-of-Xi} is not needed.

In the remainder of this section, we state the main theorems given in Section \ref{sec:unified analysis}, with the corresponding simplifications, which are certainly corollaries of the unified analysis.

\begin{theorem}
Fix $\veps>0$ and $\delta>0$. Then, there exists
$$
(z^\star,\gamma^\star,\ps^\star)\in \rZ_\ad\times \R\times \bT_\veps
$$
such that
$$
\Phi_\veps^\delta(z^\star,\gamma^\star;\ps^\star)\leq \Phi_\veps^\delta(z,\gamma;\ps)\qquad \forall (z,\gamma;\ps)\in  \rZ_\ad\times \R\times \rT\, .
$$
\end{theorem}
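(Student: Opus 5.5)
The plan is to specialize the proof of Theorem~\ref{thm:existence-of-minimizers} to the Rockafellian \eqref{eq:Rockafellian with corruption to support}, in which the density variable $\pd$ no longer appears. (In the displayed inequality of the statement the competitor $\ps$ should range over $\bT$; I read it that way.) The tacit hypotheses are those of Theorem~\ref{thm:existence-of-minimizers} restricted to the support part: Assumptions~\ref{as:on s}, \ref{as:on f0 and g}, \ref{as:on P}, \ref{as:smoothing of the positive part}, item (2) of Assumption~\ref{as:on the embedding}, Assumption~\ref{as:growth-condition-on-gos}, and $f_0$ coercive or $\rZ_\ad$ bounded.

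\textbf{Reduction to $\bT_\veps$.} As in the earlier proof, I introduce
\[
\wt\Phi_\veps^\delta:=\Phi_\veps^\delta\lvert_{\rZ\times\R\times\bT_\veps}+\imath_{\rZ_\ad}.
\]
Since $\Phi_\veps^\delta=+\infty$ off $\bT_\veps$, any minimizer of $\wt\Phi_\veps^\delta$ over the reflexive space $\rZ\times\R\times\bT_\veps$ also minimizes $\Phi_\veps^\delta$ over $\rZ_\ad\times\R\times\bT$. The functional is proper: the point $(z_0,0;0)$, with $f_0(z_0)<\infty$ and $z_0\in\rZ_\ad$, gives a finite value, because $\cH_\bT(0)=0$ and the integral term is finite by the estimate displayed before the statement together with Assumption~\ref{as:growth-condition-on-gos}.

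\textbf{Weak lower semicontinuity.} Take $z_n\wlim z$, $\gamma_n\to\gamma$ and $\ps_n\wlim\ps$ in $\bT_\veps$, with $z_n\in\rZ_\ad$ (otherwise the indicator gives $+\infty$), and pass to a subsequence realizing the $\liminf$. The restriction $\ps_n\lvert_{\Xi_\veps}$ converges weakly in $\bW_\veps\oplus\cS_\veps^\eta$. The compact embedding, and the finite dimensionality of $\cS^\eta_\veps$, upgrade this to strong convergence in $\rL^{q'}(\Xi_\veps;\Xi)$, and hence to a.e.\ convergence after a further subsequence. Then $\eta_\veps+\ps_n\to\eta_\veps+\ps$ a.e.\ on $\Xi_\veps$. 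Assumption~\ref{as:on s}(2) gives weak convergence of $s(\cdot,z_n)$ in $\rU$, weak lower semicontinuity of $g$ follows, and monotonicity and continuity of $(\cdot)_{+,\delta}$ yield the pointwise $\liminf$ inequality \eqref{eq:wlsc of gos}. Since $\rho\ge0$ is fixed, Fatou's lemma handles the integral term without any convergence of densities, which is strictly easier than before. Weak lower semicontinuity of $f_0$ and $\cH_\bT$ completes this step. If $\Xi_\veps=\Xi$ (when Assumption~\ref{as:on the embedding} holds on $\Xi$ itself), the same argument runs on all of $\Xi$.

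\textbf{Coercivity and conclusion.} Coercivity follows from coercivity of $\cH_\bT$ in $\ps$, from $f_0$ coercive or $\rZ_\ad$ bounded in $z$, and from the $\gamma$ direction. The $\gamma$ direction is the only real obstacle, and it is not spelled out in the earlier proof. As $\gamma\to+\infty$, the term $f$ grows linearly. As $\gamma\to-\infty$, I would use $(x)_{+,\delta}\ge x_+-\delta\Delta_1\ge x-\delta\Delta_1$ (Lemma~\ref{lem:props of smoothing}) to get
\[
\gamma+\kappa\,\cI_\veps[\sJ_{+,\delta}\rho]\ \ge\ \gamma\bigl(1-\kappa\,\cI_\veps[\rho]\bigr)+\kappa\,\cI_\veps\bigl[(g\circ s)\,\rho\bigr]-C .
\]
This grows to $+\infty$ provided $\kappa\,\bbP(\Xi_\veps)>1$; in the CVaR setting $\kappa=(1-\beta)^{-1}>1$, and $\bbP(\Xi_\veps)\to1$. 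The term $\cI_\veps[(g\circ s)\rho]$ is bounded below on bounded sets of $(z,\ps)$ via Assumption~\ref{as:growth-condition-on-gos}, provided $\varrho$ is locally bounded. If $\kappa\,\bbP(\Xi_\veps)\le1$ fails to hold, I would fall back to minimizing over $(z,\ps)$ first and treating $\gamma$ by the explicit one-dimensional convexity of $\gamma\mapsto\gamma+\kappa\,\cI_\veps[(\cdot-\gamma)_{+,\delta}\rho]$. With properness, weak lower semicontinuity and coercivity in hand, \cite[Theorem 3.2.5]{ABM} gives a minimizer, which lies in $\rZ_\ad\times\R\times\bT_\veps$ because the value is finite.
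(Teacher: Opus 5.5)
Your route is the paper's own: its proof of this statement refers to the proof of Theorem~\ref{thm:existence-of-minimizers} with the support-only simplifications. You are right that the $\gamma$-direction is where that argument is thin. The paper derives coercivity from $f$ and $\cH$, even though $f(z,\gamma)=f_0(z)+\gamma\to-\infty$ as $\gamma\to-\infty$. Your bound via $(x)_{+,\delta}\ge x-\delta\Delta_1$ is the correct repair when $\kappa\,\bbP(\Xi_\veps)>1$. \textbf{Your fallback for $\kappa\,\bbP(\Xi_\veps)\le 1$ fails.} Since $0\le A_\delta\le 1$, the $\gamma$-derivative $1-\kappa\,\cI_\veps[A_\delta(\sJ(z,\gamma;\om_\veps(\ps)))\,\rho]$ is at least $1-\kappa\,\bbP(\Xi_\veps)$. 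Equivalently, $(x)_{+,\delta}\le x_++\delta\Delta_2$ gives $\Phi_\veps^\delta(z,\gamma;\ps)\le C(z,\ps)+\gamma\,(1-\kappa\,\bbP(\Xi_\veps))$ for $\gamma<0$. So if $\kappa\,\bbP(\Xi_\veps)<1$ the functional is unbounded below, no minimizer exists, and convexity in $\gamma$ cannot change that. If $\kappa\,\bbP(\Xi_\veps)=1$ and $\zeta>0$ everywhere (e.g.\ Chen--Mangasarian), the derivative is strictly positive and the infimum is again not attained. Thus $\kappa\,\bbP(\Xi_\veps)>1$ is a hypothesis the statement needs, not a case to work around. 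It holds if $\Xi_\veps=\Xi$ and $\kappa=(1-\beta)^{-1}>1$, or for small $\veps$, since $\bbP(\Xi_\veps)\to 1$. Also, your lower bound on $\cI_\veps[(g\circ s)\rho]$ on bounded sets of $(z,\ps)$ does not give joint coercivity, because $\gamma\to-\infty$ can occur together with $\|(z,\ps)\|\to\infty$. This is harmless if $g$ is bounded below. Otherwise $f_0$ and $\theta_\veps\cH_{\bT}$ must dominate the growth permitted by Assumption~\ref{as:growth-condition-on-gos}.

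\textbf{A second gap, inherited from the same proof, is coercivity in $\ps$.} The Weierstrass theorem on the reflexive space $\rZ\times\R\times\bT_\veps$ requires $\Phi_\veps^\delta\to+\infty$ as $\|\ps\|_{\bT_\veps}\to\infty$. But $\cH_{\bT}$ is only coercive for the $\bT$-norm. When $\bW_\veps$ is infinite-dimensional, the compact embedding makes $\|\cdot\|_{\bW_\veps}$ strictly stronger than the $\rL^{q'}$-norm. Minimizing sequences may then stay bounded in $\bT$ while blowing up in $\bT_\veps$, and the compactness you use to extract a.e.\ convergence is lost. This can destroy existence. Take $\Xi=\R$ with $\mu$ the Lebesgue measure restricted to $[0,1]$, $\rho=\chi_{[0,1]}$, $\Xi_\veps=\Xi$, $\eta_\veps=I$, $q'=2$, $\bW_\veps=H^1(0,1)$, $\cH_{\bT}=\|\cdot\|_{\rL^2}^2$, $(g\circ s)(\xi,z)=\min\{|\xi|,|\xi-1|\}$, $\kappa>1$, and $\zeta>0$. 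All assumptions hold. The functional without the indicator is continuous on $\R\times\rL^2$, and $H^1$ is dense, so the infimum over $\bT_\veps$ equals the infimum over $\rL^2$. In $\rL^2$ the minimization in $\ps$ is pointwise. By the reflection symmetry of $g\circ s$ about $1/2$, every pointwise minimizer for a given $\gamma$ puts $\xi+\ps(\xi)$ below $1/2-c$ for $\xi<1/2$ and above $1/2+c$ for $\xi>1/2$, where $c=\min\{1/2,\kappa A_\delta(-\gamma)/(2\theta_\veps)\}>0$. Hence no minimizer lies in $\bT_\veps\subset C([0,1])$. You need an extra assumption, for instance that $\cH_{\bT}$ be coercive on $\bT_\veps$ with respect to $\|\cdot\|_{\bT_\veps}$.
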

\begin{proof}
It follows from the proof of Theorem \ref{thm:existence-of-minimizers}, with the aforementioned simplifications.
\end{proof}

Next, we state the first-order optimality conditions for this case. To this end, and under the same notation as the previous sections, instead of item (1) in Assumption~\ref{as:on-differentiability}, we assume that $\cH_\bT$ is Fréchet differentiable, with derivative $\bD\cH_\bT(\ps)\in \sL(\bT,\R)$. The remaining assumptions are unchanged.

The following result constitutes the simplified version of Theorem \ref{thm:optimality-conditions}.
\begin{theorem}
Let $(z_\veps^\star,\gamma_\veps^\star,\ps_\veps^\star)\in \rZ_\ad\times \R\times \bT_\veps$ be a minimizer of the Rockafellian \eqref{eq:Rockafellian with corruption to support}. Setting $\lambda_\veps^\star:=(\omega_\veps(\ps_\veps^\star),z_\veps^\star)$, we have
\begin{enumerate}
\item For all $z\in \rZ_\ad$,
\begin{equation*}
\Big\langle \bD f_0(z_\veps^\star) + \kappa\, \cI_\veps \big[ \wp_\veps(t_\veps^\star)\, A_\delta\big(\sJ(z_\veps^\star,\gamma_\veps^\star;\omega_\veps(\ps_\veps^\star))\big)\, \bD_z s(\lambda_\veps^\star)^*\, \bD g(s(\lambda_\veps^\star)) \big] , z-z_\veps^\star \Big\rangle \geq 0\, .
\end{equation*}
\item It holds
\begin{equation*}
\cI_\veps\Big[ \rho\, A_\delta\big( \sJ(z_\veps^\star,\gamma_\veps^\star;\omega_\veps(\ps_\veps^\star))\big) \Big]  = \frac{1}{\kappa}\, .
\end{equation*}
\item  For all $\ps \in \bT_\veps$, 
\begin{equation*}
\Big\langle \theta_\veps\, \bD_{\ps}\cH_\bT(\ps_\veps^\star) + \kappa \, \cI_\veps\Big[ \rho\, A_\delta\big( \sJ(z_\veps^\star,\gamma_\veps^\star;\omega_\veps(\ps_\veps^\star)) \big)\, \bD_{\ps} s(\lambda_\veps^\star)^*\, \bD g (s(\lambda_\veps^\star) \Big], \ps-\ps_\veps^\star\Big\rangle  \geq 0 \, .
\end{equation*}
\end{enumerate}
\end{theorem}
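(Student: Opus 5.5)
The plan is to treat this as a specialization of Theorem~\ref{thm:optimality-conditions} to the support-only Rockafellian \eqref{eq:Rockafellian with corruption to support}, rerunning the same argument rather than invoking that theorem directly, since there is no density variable $\pd$. First I rewrite the minimization problem in the form \eqref{eq:auxiliary problem for optimality conditions}: set
\[
\wh\Phi_\veps^\delta(z,\gamma;\ps):=f(z,\gamma)+\kappa\,\cI_\veps\big[\sJ_{+,\delta}(z,\gamma;\om_\veps(\ps))\,\rho\big]+\theta_\veps\,\cH_\bT(\ps),
\]
and add $\imath_{\rZ_\ad\times\R\times\bT_\veps}$. The minimizer $(z_\veps^\star,\gamma_\veps^\star,\ps_\veps^\star)$ then minimizes the sum of a smooth functional and the indicator of a convex set, which is a product of convex sets ($\bT_\veps$ being a linear subspace).

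Next I prove the analogue of Lemma~\ref{lem:derivatives-of-Rockafellian}: $\wh\Phi_\veps^\delta$ is Fréchet differentiable, with the partial derivatives obtained from those lemma formulas by replacing $\wp_\veps(\pd)$ with $\rho$ and dropping the $\pd$-derivative. The chain rule applies to $\ps\mapsto s(\om_\veps(\ps),z)\in\rL^p(\Xi;\rU)$ (items (4)--(5) of Assumption~\ref{as:on-differentiability}, composed with the affine map $\ps\mapsto\eta_\veps+\ps$), then to $u\mapsto g\circ u\in\rL^{q'}$ (item (6)), and finally to the Nemytskii map $x\mapsto(x)_{+,\delta}$. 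The last map has derivative $A_\delta(x)$ with $0\le A_\delta\le1$, and by Lemma~\ref{lem:props of smoothing} it is $1$-Lipschitz. For the $\gamma$-derivative, the inner map $\gamma\mapsto \sJ$ has derivative $-1$. Pairing against $\rho\in\rL^\infty$ and integrating over $\Xi_\veps$, which has finite measure, is a bounded linear operation on $\rL^{q'}(\Xi_\veps)$.

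Then I apply \cite[Corollary 1.12.3.]{Kr} exactly as in the proof of Theorem~\ref{thm:optimality-conditions}:
\[
-\bD\wh\Phi_\veps^\delta \in N_{\rZ_\ad}(z_\veps^\star)\times\{0\}\times N_{\bT_\veps}(\ps_\veps^\star).
\]
The $z$-component gives item (1) once written with adjoints, where $\wp_\veps(t_\veps^\star)$ is to be read as $\rho$ in this setting. The $\gamma$-component gives $1-\kappa\,\cI_\veps[\rho\,A_\delta(\sJ)]=0$, which is item (2). The $\ps$-component gives item (3).

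The main obstacle is the differentiability of the Nemytskii operator $x\mapsto(x)_{+,\delta}$ from $\rL^{q'}(\Xi_\veps)$ into $\rL^1$ after multiplication by $\rho$. Here I would use the uniform bound $|A_\delta|\le1$, the continuity of $A_\delta$, and dominated convergence on the remainder quotient. This shows $\int_0^1\big(A_\delta(x+\tau h)-A_\delta(x)\big)h\,d\tau=o(\|h\|_{\rL^{q'}})$ in $\rL^1$, using Hölder and the finiteness of $\mu(\Xi_\veps)$. The rest is bookkeeping analogous to the omitted computations in Lemma~\ref{lem:derivatives-of-Rockafellian}.
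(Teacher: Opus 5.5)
Your proposal is correct and follows the paper's route. The paper omits this proof as analogous to Lemma~\ref{lem:derivatives-of-Rockafellian} and Theorem~\ref{thm:optimality-conditions}: add the indicator of $\rZ_\ad\times\R\times\bT_\veps$, apply \cite[Corollary 1.12.3.]{Kr}, and read off the three components, with $\wp_\veps(t_\veps^\star)$ in item (1) indeed to be read as $\rho$. Your Nemytskii remainder argument is a useful addition, and it does not need $\mu(\Xi_\veps)<\infty$ (which the paper says is dispensable in this support-only setting), because $\rho\in\rL^1\cap\rL^\infty\subset\rL^q$ already supplies both the H\"older bound and the dominating function for the weighted remainder.
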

\begin{proof}
The proof is analogous to that of Lemma~\ref{lem:derivatives-of-Rockafellian} and Theorem~\ref{thm:optimality-conditions}, and therefore is omitted.
\end{proof}

We finish this section by stating the simplified version of Theorem \ref{thm:weak-strong-gamma-convergence-result}, which concerns weak-strong Gamma convergence.
\begin{theorem}
Let $\rho\in \rP$ and consider corruption maps $(\eta_\veps)_\veps$ such that $\eta_\veps\in \bQ_\veps\cap \bT_\veps$, for each $\bT_\veps>0$. Assume that
$$
\lim_{\veps\downarrow 0}\theta_\veps
=+\infty\qan \lim_{\veps\downarrow 0}\, \theta_\veps\, \cH_\bT(I-\eta_\veps)=0 \, .
$$
Then, under the previously discussed assumptions, we have weak-strong $\Gamma$-convergence of $\Phi_\veps^\delta$ to $\Phi^\delta$.
\end{theorem}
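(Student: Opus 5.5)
We will follow the proof of Theorem~\ref{thm:weak-strong-gamma-convergence-result}, with the $\pd$-component removed, so that $\rho$ stays fixed and only the support maps are perturbed. Throughout, the limiting functional $\Phi^\delta$ is the one in \eqref{eq:smoothed Rockafellian}, restricted to $\pd=0$:
\begin{equation*}
\Phi^\delta(z,\gamma;\ps)=f(z,\gamma)+\kappa\,\cI\big[\sJ_{+,\delta}(z,\gamma;\om(\ps))\,\rho\big] \quad\text{if } \ps=0 \text{ a.e.},
\end{equation*}
and $\Phi^\delta(z,\gamma;\ps)=+\infty$ otherwise. We first record that the two hypotheses give $\eta_\veps\to I$ in $\bT$. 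Indeed, $\theta_\veps\to+\infty$ and $\theta_\veps\cH_\bT(I-\eta_\veps)\to0$ force $\cH_\bT(I-\eta_\veps)\to 0$. The defining property of $\cH_\bT$ then upgrades this to strong convergence $I-\eta_\veps\to0$ in $\bT=\rL^{q'}(\Xi;\Xi)$.

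\emph{Limit inferior.} Take $(z_\veps,\gamma_\veps)\wlim(z,\gamma)$ and $\ps_\veps\to\ps$ in $\bT$, and pass to a subsequence $\veps'$ realizing the $\liminf$. We may assume $\ps_{\veps'}\in\bT_{\veps'}$, since otherwise the value is $+\infty$ and there is nothing to prove. Strong convergence in $\bT$ gives, along a further subsequence, $\eta_{\veps'}+\ps_{\veps'}\to I+\ps$ a.e. in $\Xi$. Assumption~\ref{as:on s}(2) and the weak lower semicontinuity of $g$ then yield a pointwise a.e. $\liminf$ inequality for $(g\circ s)(\om_{\veps'}(\ps_{\veps'}),z_{\veps'})$. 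Since $\gamma_{\veps'}\to\gamma$ and $(\,\cdot\,)_{+,\delta}$ is continuous and monotone, this becomes
\begin{equation*}
\liminf_{\veps'}\sJ_{+,\delta}(z_{\veps'},\gamma_{\veps'};\om_{\veps'}(\ps_{\veps'}))\ge\sJ_{+,\delta}(z,\gamma;\om(\ps))\quad\text{a.e.}
\end{equation*}
Multiplying by $\chi_{\Xi_{\veps'}}\rho\ge0$ and using \eqref{eq:approximation property of Xi}, Fatou's lemma gives the $\liminf$ bound for the integral term. (If $\Xi_\veps=\Xi$ is used, the indicator is absent altogether.) For the penalty, if $\ps\ne0$ then $\cH_\bT(\ps_{\veps'})$ cannot tend to $0$. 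Hence $\theta_{\veps'}\cH_\bT(\ps_{\veps'})\to+\infty$, matching $\Phi^\delta=+\infty$. If $\ps=0$, nonnegativity of the penalty suffices. Together with the weak lower semicontinuity of $f_0$, this gives \eqref{eq:liminf condition}.

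\emph{Limit superior.} If $\ps\neq0$ the inequality is trivial. If $\ps=0$, we take the recovery sequence $(z,\gamma,I-\eta_\veps)$, which converges strongly to $(z,\gamma,0)$ by the first step. It lies in $\bT_\veps$ because of the enrichment $\cS_\veps^\eta=\spa\{I|_{\Xi_\veps},\eta_\veps|_{\Xi_\veps}\}$ together with $\eta_\veps\in\bQ_\veps\cap\bT_\veps$, so the indicator vanishes. We need the sequence to return exactly the uncorrupted support, i.e.\ $\om_\veps(I-\eta_\veps)=I$ on $\Xi_\veps$. With the paper's convention $\om_\veps(\ps)=\eta_\veps+\ps$, evaluating $\Phi_\veps^\delta$ on this sequence then gives
\begin{equation*}
f(z,\gamma)+\kappa\,\cI\big[\chi_{\Xi_\veps}\sJ_{+,\delta}(z,\gamma;I)\rho\big]+\theta_\veps\cH_\bT(I-\eta_\veps).
\end{equation*}
The last term tends to $0$ by hypothesis. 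The middle term converges to $\kappa\,\cI[\sJ_{+,\delta}(z,\gamma)\rho]$ by dominated convergence, with dominating function $\sJ_{+,\delta}(z,\gamma)\rho$. This function is integrable by Assumption~\ref{as:growth-condition-on-gos}, by Lipschitz continuity of the smoothing, and since $\rho\in\rP$.

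\emph{Main obstacle.} The delicate point is the a.e. convergence $\om_{\veps'}(\ps_{\veps'})\to\om(\ps)$ in the $\liminf$ step. It needs a diagonal subsequence extraction that keeps the $\liminf$ realized, and it must be done in the vector-valued space $\rL^{q'}(\Xi;\Xi)$; this works via the Riesz subsequence theorem applied to $\|\cdot\|_\Xi$. We also need to check the bookkeeping of $I$ versus $0$ in the identity $\om_\veps(I-\eta_\veps)=I$. If the convention makes the anchor shift by $I$, we would instead take the recovery sequence $\ps_\veps=-\eta_\veps$ composed with the identity term; the span enrichment covers either choice.
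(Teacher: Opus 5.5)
Your proposal is correct and follows the same route as the paper, which reruns the proof of Theorem~\ref{thm:weak-strong-gamma-convergence-result} with the density frozen at $\rho$. For the $\liminf$ this means extracting an a.e.-convergent subsequence from $\eta_{\veps'}+\ps_{\veps'}\to I+\ps$ in $\bT$ and applying Fatou with $\chi_{\Xi_{\veps'}}\rho$. For the $\limsup$ it means the recovery sequence $(z,\gamma,I-\eta_\veps)$, admissible thanks to the enrichment $\cS_\veps^\eta$, followed by dominated convergence.

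Your closing hedge is unnecessary: under the paper's conventions $\om_\veps(I-\eta_\veps)=I=\om(0)$ holds exactly. Your explicit tracking of the vanishing penalty term $\theta_\veps\cH_\bT(I-\eta_\veps)$ in the recovery value is slightly more careful than the paper's own display.
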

\begin{proof}
The proof is essentially identical to that of Theorem \ref{thm:weak-strong-gamma-convergence-result}, with the difference that now, as a result of the absence of corruption to the distribution,  the reasoning leading to \eqref{eq:lim inf bound of J} is greatly simplified. The rest of the proof works verbatim.
\end{proof}

\section{Numerical examples}\label{sec:numerics}
We consider two stochastic optimization problems constrained by elliptic boundary value problems. 
 In each case, we model a different corruption scenario. In the first, samples from a normally distributed random vector 
are corrupted by introducing samples with increased variance. In the second, a truncated 
exponential random variable is corrupted by increasing the likelihood of tail events. 

\subsection{Example 1}\label{subsec:example_1} 
For our first numerical example, we consider the stochastic optimal control problem 
\begin{equation}  \label{eq:1d_optimal_control_problem}
\min_{z \in L^2(0,1)} \varphi(z), \qquad 
\varphi(z) =  \on{CVaR}_{\beta}\left( \frac12 \| s(\xi, z) - u_{\star}\|_{L^2(0,1)}^2 \right) + \frac{\alpha}{2} \| z\|_{L^2(0,1)}^2
\end{equation} 
where $u(\cdot, \xi) := s(\xi, z)$ is the solution operator to the one-dimensional boundary value problem 
\begin{align} 
-\frac{d}{dx} \Big( a(x,\xi) \frac{d}{dx} u(x,\xi) \Big) = z(x)&, \qquad (x,\xi) \in (0,1)\times \Xi \label{eq:1d_bvp_constraint} \\
u(0,\xi) = u(1,\xi) = 0&,  \nonumber
\end{align}
The diffusion coefficient is given by 
$$
a(x,\xi) = e^{\sigma \sum_{k=1}^d \sqrt{\lambda_k} \sin(x/\sqrt{\lambda_k}) \xi_k}, \qquad \lambda_k = \frac{4}{(2k-1)^2 \pi^2}, 
$$
where each $\xi_k$ is independent and normally distributed with mean-zero and unit variance. Here the argument of the exponential 
function is a truncated Kosambi-Karhunen-Lo\'{e}ve expansion \cite[Theorem 2.3]{martinez2018optimal}
for a rescaled Brownian motion on the interval $(0,1)$.  
We take $\sigma =0.4$ and consider $d=50$ expansion terms.  
For the control regularization term in the objective function, we take $\alpha = 10^{-5}$, while 
the target function $u_{\star}(x) = 1$. 

To numerically solve the optimization problem, we discretize the expectation value in the objective
function with a sample-average approximation using $N=5000$ samples. 
Similar to the numerical experiments reported in 
\cite[Section 5.2]{beiser2023adaptive}, we employ 
the ``nested quantile estimation'' (NQE)
strategy described in \cite[Algorithm 2]{beiser2023adaptive}, which 
amounts to an alternating coordinate descent algorithm \cite[Section 1.J]{royset2021optimization} on the 
variables $z$ and $\gamma$.
%
After mollifying the function $(x)_{+}$ with 
\begin{equation}\label{eq:plus_fn_mollification}
(x)^{\delta}_+ := x + \delta \ln(1 + e^{-x/\delta})
\end{equation}
(from \cite[Eqn.~(4.6)]{KS}), the optimization problem for fixed $\gamma$ becomes
$$
\min_{z\in L^2(0,1)} \left(  \frac{1}{1-\beta} \frac{1}{N} \sum_{i=1}^N \Big(\frac12\| s(\xi^{(i)}, z) - u_{\star}\|_{L^2(0,1)}^2 - \gamma \Big)_+^{\delta} 
+ \frac{\alpha}{2} \| z\|_{L^2(0,1)}^2 \right), 
$$
 which is numerically solved using SciPy's implementation of the L-BFGS-B algorithm with the default settings \cite{2020SciPy-NMeth}. 
In turn, for fixed $z$, the optimization problem 
$$
\min_{\gamma \in \R} \left(\gamma +  \frac{1}{1-\beta}\frac{1}{N}\sum_{i=1}^N \Big(\frac12\| s(\xi^{(i)}, z) - u_{\star}\|_{L^2(0,1)}^2 - \gamma \Big)_+^{\delta} 
\right)
$$
is solved using the bisection method to find the critical point. Note that this latter optimization problem 
is relatively cheap; the bulk of the computational expense requires less than a single objective function calculation. 
The NQE algorithm terminates when the absolute difference in optimal $\gamma$ values between successive iterations is smaller 
than a specified tolerance, which we set to $10^{-4}$. In all numerical examples, the mollification parameter
$\delta = 10^{-3}$. 

The state equation \eqref{eq:1d_bvp_constraint} and the corresponding adjoint equation are discretized with 
a second-order finite difference method with uniform grid spacing $h = 1/128$; the resulting tridiagonal 
systems are solved with SciPy's banded direct solver. 
All $L^2$ norms are approximated with the composite trapezoidal rule. 

To introduce data corruption into the stochastic optimization problem 
\eqref{eq:1d_optimal_control_problem}, we select the first $M \in \N$ samples $(M < N)$ 
and increase their variance through the map $\xi \mapsto 5\xi$, which amounts to setting 
$\sigma = 2$ for those samples.  

For the values $\beta\in \{0.1, 0.5, 0.9\}$, we 
solve both the uncorrupted version of \eqref{eq:1d_optimal_control_problem}
and the corrupted version at various corruption levels,  
defined as 
\begin{equation} \label{eq:corrupted_amount_defn_case2}
 \text{\% corruption} := 100 \cdot M/N . 
\end{equation}
Figure \ref{fig:control_plots_example_1} shows that even with only 1\% of the samples corrupted, 
the optimal control $z^{\ast}_{\rm corrupted}$ is significantly different 
than the uncorrupted one $z^{\ast}_{\rm true}$ for all values of $\beta$. The differences 
grow with increasing corruption.

\begin{figure}[ht]       
    \centering
 \includegraphics[width=0.32\textwidth]{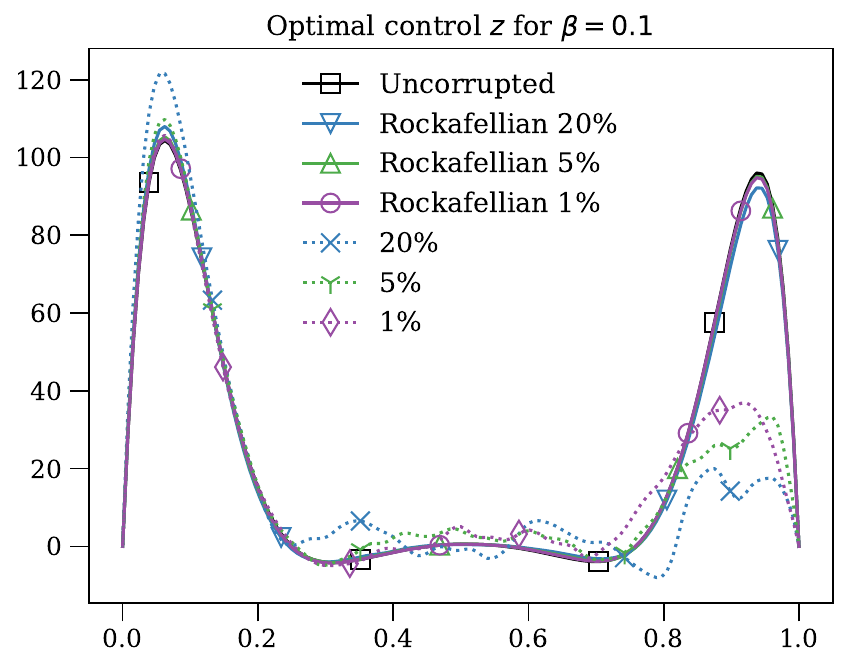}
 \includegraphics[width=0.32\textwidth]{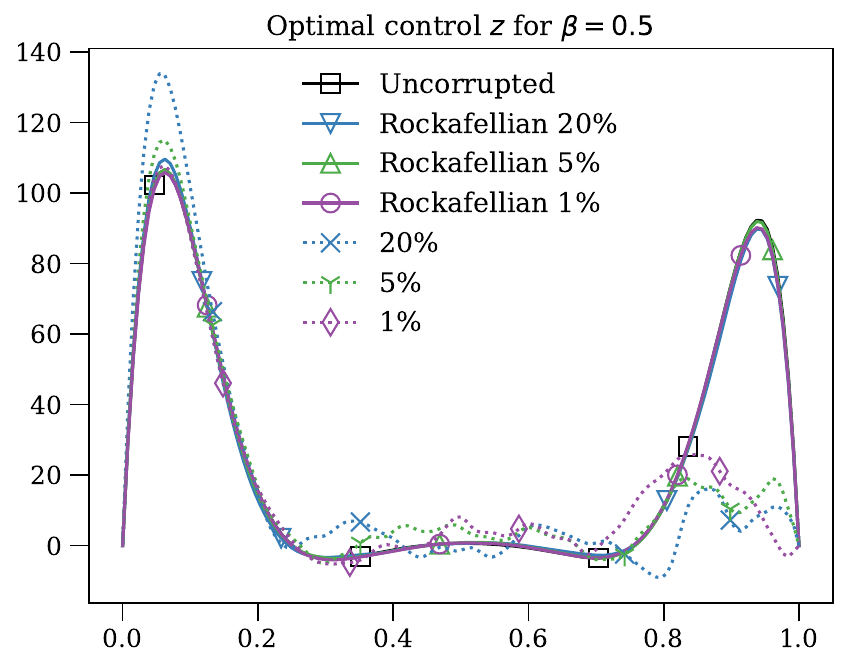}
 \includegraphics[width=0.32\textwidth]{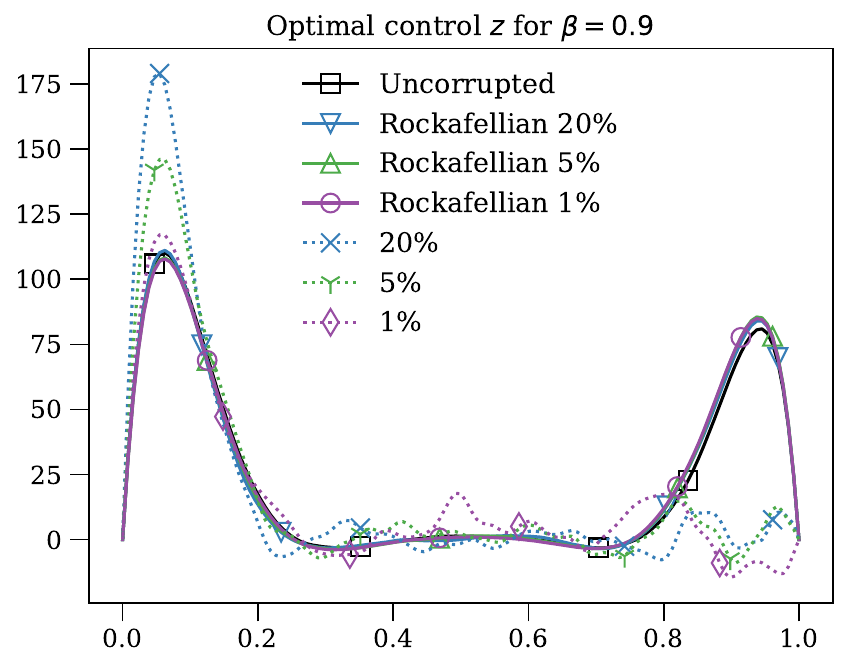}
\caption{ 
Example 1: Optimal controls for the true, uncorrupted problem, as well as for the corrupted
problem (dotted lines) and the corresponding Rockafellian relaxations at varying 
corruption levels.  
} 
\label{fig:control_plots_example_1}
\end{figure}

To recover $z^{\ast}_{\rm true}$ in the presence of data corruption, we define the Rockafellian
\begin{equation} \label{eq:Rock_example_1} 
\Phi(z,t) := \min_{\gamma \in \R}  \left( \gamma + \frac{1}{1-\beta}\sum_{i=1}^N \Big( \frac{1}{N} + t_i\Big) 
\Big(\frac12 \| s(\xi^{(i)}, z) - u_{\star}\|^2_{L^2(0,1)} - \gamma\Big)^{\delta}_+  \right) + \theta \| t\|_1 + \iota_{\Delta} (p+t). 
\end{equation}
where $t \in \R^{N}$, $\theta > 0$, $\Delta$ is the set of probability vectors, and every component of $p \in \R^N$ equals $1/N$. 
Following the numerical experiments in \cite{royset2025rockafellian} and \cite{ACDR}, 
the $l_1$ norm is chosen for the perturbation variable $t$ because of its well known sparsity-promoting property.
We also borrow from \cite{royset2025rockafellian} and \cite{ACDR} the numerical optimization strategy 
to minimize $\Phi$, which involves an alternating-direction (ADI) heuristic in which we first fix $t=0$ and compute
$z^{\ast} \in \argmin \Phi(z,0)$ using the NQE described above. Using the result, we then solve the linear program 
$t^{\ast} \in \argmin \Phi(z^{\ast}, t)$ using SciPy's implementation of the simplex method. This process is repeated until the 
absolute $l_1$ distance between successive $t^{\ast}$ values is smaller than $10^{-3}$. 

The constraint that $p + t \in \Delta$ implies the 
pointwise bounds $ -1/N \le t_i \le 1-1/N$ for each $1\le i \le N$. As in the
numerical examples in \cite{ACDR}, in practice we find better results after imposing the tighter bounds $ -1/N \le t_i \le 1/N$, 
as this prevents the linear program solver from ``greedily'' identifying a small handful of samples $\xi^{(i)}$ at which 
$$
\Big( \frac12 \| s(\xi^{(i)}-u_{\star}\|_{L^2(0,1)}^2 \Big)^{\delta}_+ 
$$
is smallest and then subsequently removing the other samples (including potentially uncorrupted, ``clean'' samples). 

Figure \ref{fig:control_plots_example_1} shows that the optimal controls $z^{\ast}_{\rm Rock}$ obtained by minimizing 
\eqref{eq:Rock_example_1} with $\theta = 10^{-1}$ are quite close to $z^{\ast}_{\rm true}$ for all 
values $\beta \in \{0.1, 0.5, 0.9\}$ and 1\%, 5\% and 20\% corruption levels. 
The differences are quantified in Table \ref{table:example_1_accuracy}, which shows the 
relative $L^2$ errors and ratio of errors, as defined by 
\begin{equation} \label{eq:error_defns}
E_{\rm rel}(z) := \frac{\| z - z_{\rm true}^{\ast}\|_{L^2(0,1)}^2 }{\| z_{\rm true}^{\ast}\|_{L^2(0,1)}^2} 
\quad \text{and} \quad 
\cE_{\rm ratio}:=  \frac{ E_{\rm rel}(z^{\ast}_{\rm corrupted}) }{E_{\rm rel}(z^{\ast}_{\rm Rock})}, 
\end{equation}
respectively. 
The $L^2$ errors for $z^{\ast}_{\rm Rock}$ are all at least one order of magnitude smaller than the corresponding 
ones in the corrupted cases. Rockafellian relaxation can even mitigate data corruption at
  a 40\% level (not illustrated in Figure \ref{fig:control_plots_example_1}), 
although the reduction in error is not as substantial, particularly for $\beta = 0.9$. 
Additionally displayed are the percentages of corrupted and clean data samples removed by 
the Rockafellian perturbation variable $t$. For fixed corruption level, the results indicate
that, as $\beta$ increases, 
the Rockafellian variable $t$ removes more corrupted data samples.  
In general, both the $L^2$ errors and the number of removed data samples will change as a function 
of $\theta$.

\begin{table}                                   
  \begin{center}
  \def~{\hphantom{0}}
    \begin{tabular}{ c  c  c  c  c  }
\hline
Corruption level & $E_{\rm rel}(z^{\ast}_{\rm Rock})$ & $\mathcal{E}_{\rm ratio}$ & Corrupted deleted & Clean deleted \\
\hline\hline
 & & $\beta=0.1$ & & \\                       
\hline\hline
  1\%            & $7.50\cdot 10^{-2}$ & 48.0  &  14/50=28\% &  1/4950=0.02\%                     \\
  5\%            & $9.38\cdot 10^{-2}$ & 40.4   &  85/250=34\% &   0/4750=0.0\%                     \\  
  20\%            & $3.55\cdot 10^{-1}$ & 13.7 &  352/1000=35.2\% & 0/4000=0.0\%                     \\
  40\%           & $7.13\cdot 10^{-1}$ & 7.76 &  645/2000=32.3\% & 0/3000=0.0\%                     \\
\hline\hline
& & $\beta=0.5$ & & \\                       
\hline\hline
  1\%            & $1.36\cdot 10^{-1}$ & 35.4  &  21/50=42\% &  11/4950=0.22\%                     \\
  5\%            & $6.06\cdot 10^{-2}$ & 76.3 &  123/250=49.2\% &   11/4750=0.23\%                     \\  
  20\%            & $2.83\cdot 10^{-1}$ & 18.6 &  508/1000=50.8\% & 9/4000=0.25\%                     \\
  40\%           & $4.22\cdot 10^{-1}$ & 13.7 &  972/2000=48.6\% & 5/3000=0.17\%                     \\
\hline\hline
& & $\beta=0.9$ & & \\                       
\hline\hline
  1\%            & $4.92\cdot 10^{-1}$ & 11.7  &  33/50=66\% &  199/4950=4.02\%                     \\
  5\%            & $4.79\cdot 10^{-1}$ & 10.8 &  177/250=70.8\% &   194/4750=4.08\%                     \\  
  20\%            & $4.51\cdot 10^{-1}$ & 13.2 &  693/1000=69.3\% & 139/4000=3.48\%                     \\
  40\%           & $1.93\cdot 10^{0}$ & 3.31 &  1103/2000=55.2\% & 51/3000=1.70\%                     \\
    \end{tabular}
    \caption{
Example 1 with $\theta = 10^{-1}$: 
relative $L^2$ errors $E_{\rm rel}$  
between the Rockafellian and true optimal controls, as well as 
the ratio of $L^2$ errors $\mathcal{E}_{\rm ratio}$ for the corrupted and 
Rockafellian optimal controls; see \eqref{eq:error_defns}. Also included
are the fraction of corrupted and 
clean sample points correctly and incorrectly removed by the perturbation 
variable $t$, respectively. 
Corruption levels defined by \eqref{eq:corrupted_amount_defn_case2}. 
}
  \label{table:example_1_accuracy}
  \end{center}
\end{table}

Table \ref{table:example_1_theta} illustrates the impact of varying $\theta$ when $\beta=0.9$ and 
the corruption level is fixed at 10\%. 
As $\theta$ decreases, the number of corrupted data samples that are removed increases; however, 
the number of uncorrupted, clean samples removed also increases. For this particular problem, 
the value of $\theta=10^{-1}$ yields the largest reduction in $L^2$ error. For all values of $\theta$, 
however, the Rockafellian optimal control is closer to the true, uncorrupted one than the corresponding
corrupted controls. 

\begin{table}                                   
  \begin{center}
  \def~{\hphantom{0}}
    \begin{tabular}{ c | c | c | c | c  }
\hline
$\theta$ & $E_{\rm rel}(z^{\ast}_{\rm Rock})$ & $\mathcal{E}_{\rm ratio}$ & Corrupted deleted & Clean deleted \\
\hline
  1                   & $1.75\cdot 10^{0}$ & 3.12  &  93/500=18.6\% &  0/4500=0.0\%                     \\
  $10^{-1}$           & $4.93\cdot 10^{-1}$ & 11.1   &  359/500=71.8\% &   179/4500=3.98\%                     \\  
  $10^{-2}$           & $1.18\cdot 10^{0}$ & 4.62 &  451/500=90.2\% & 1853/4500=41.2\%                     \\
  $10^{-3}$           & $9.70\cdot 10^{-1}$ & 5.64 &  445/500=89.0\% & 2038/4500=45.3\%                     \\
    \end{tabular}
    \caption{
Results from Example 1 at various $\theta$ values with $\beta =0.9$ and 10\% corruption 
(as defined by \eqref{eq:corrupted_amount_defn_case2}). $E_{\rm rel}$ and $\cE_{\rm ratio}$ are 
defined by \eqref{eq:error_defns}. 
}
  \label{table:example_1_theta}
  \end{center}
\end{table}

\subsection{Example 2} 
For our second numerical example, we consider the stochastic optimal control problem 
\begin{equation}  \label{eq:2d_optimal_control_problem}
\min_{z \in \Omega} \varphi(z), \qquad 
\varphi(z) =  \on{CVaR}_{\beta}\left( \frac12 \| s(\xi, z) - u_{\star}\|_{L^2(\Omega)}^2 \right) + \frac{\alpha}{2} \| z\|_{L^2(\Omega)}^2
\end{equation} 
where $u(\cdot, \xi) := s(\xi, z)$ is the solution operator to the two-dimensional boundary value problem 
\begin{align} 
-\Delta u(x,\xi) + v(\xi) \cdot \nabla u(x,\xi) = f(x) + z(x)&, \qquad (x,\xi) \in \Omega \times \Xi \label{eq:2d_bvp_constraint}\\ 
u(x,\xi) =  0&, \qquad (x,\xi) \in \partial \Omega \times \Xi,  \nonumber
\end{align}
where $\Omega = B_1(0)$, i.e.\ the unit-ball in $\mathbb{R}^2$ centered at the origin. 
The divergence-free advection field is given by 
$$
v(\xi) = \Big\langle \xi \cos\big(\xi \pi/v_{\rm max} -\pi/2 \big) ,  
\xi \sin\big(\xi \pi /v_{\rm max} - \pi/2\big)  \Big\rangle
$$
where $v_{\rm max}$ equals 20. The target function in the objective function $u_{\star}(x) =1$. 

The random variable $\xi$ takes values on the state space $\Xi = [0,20]$.
In the true, uncorrupted case, $\xi$ is distributed according to a truncated exponential probability 
density function given by 
\begin{equation} \label{eq:rho_true}
\rho_{\rm true}(\xi) = \Big(\frac{k }{1-e^{-k v_{\rm max}}} \Big)e^{-k \xi}
\end{equation}
where $k = 0.25$. 
Intuitively, at smaller values of $\xi$, the advection field is weaker and points more towards the south, while at 
larger values of $\xi$, the advection field is stronger and points more towards the north. 
We refer to this case as 0\% corruption. 

In the fully (100\%) corrupted case, we consider $\xi$ with a density function that decays algebraically, 
rather than exponentially: 
$$
\rho_{\rm corrupted}(\xi) = \Big( \frac{1}{\log(v_{\rm max}/a + 1)}  \Big) \frac{1}{\xi + a}, 
$$
where $a = 5$.  
Finally, we also consider a partially corrupted case, in which the probability density of $\xi$ 
is given by $\frac12 (\rho_{\rm true} + \rho_{\rm corrupted})$, which we refer to as 50\% corruption. 
The three cases are shown in Figure \ref{fig:density_plots}; notice that the corrupted densities have
 heavier tails than the uncorrupted one. 

\begin{figure}[ht]
 \includegraphics[width=0.5\textwidth]{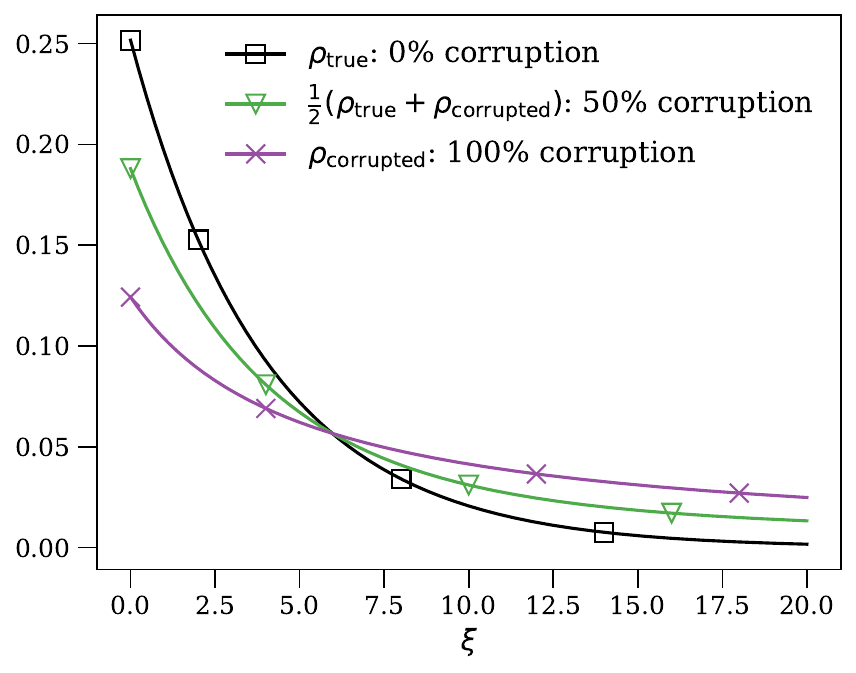}
\label{fig:density_plots}
\caption{Probability density functions for the random variable $\xi$ in the advection field $v(\xi)$ 
for the PDE constraint \eqref{eq:2d_bvp_constraint}. 
}
\end{figure}

To numerically solve the optimization problem \eqref{eq:2d_optimal_control_problem}, 
we discretize the expectation value in the objective function with standard Gaussian 
quadrature (GQ) with $N = 15$ points. The control regularization parameter is set
as $\alpha = 10^{-4}$.  
The same NQE strategy described in the previous subsection is employed to minimize 
over the control variable $z$ and $\on{CVaR}_{\beta}$ variable $\gamma$. The $(x)_+$ is
again mollified with \eqref{eq:plus_fn_mollification} and $\delta = 10^{-3}$. For fixed
$\gamma$, minimization over $z$ is done with the L-BFGS method with history size $m=9$
\cite[Chapter 7.2]{nocedal1999numerical} and backtracking line search based on the Wolfe 
criterion. 

The state equation \eqref{eq:2d_bvp_constraint} and the corresponding adjoint equation
are discretized with a finite element method with $Q1$ elements implemented in 
\texttt{deal.II} 
\cite{arndt2022deal}; the total number of degrees of freedom is 5185.

\begin{figure}[ht]
   \begin{center}
   \includegraphics[width=0.95\textwidth]{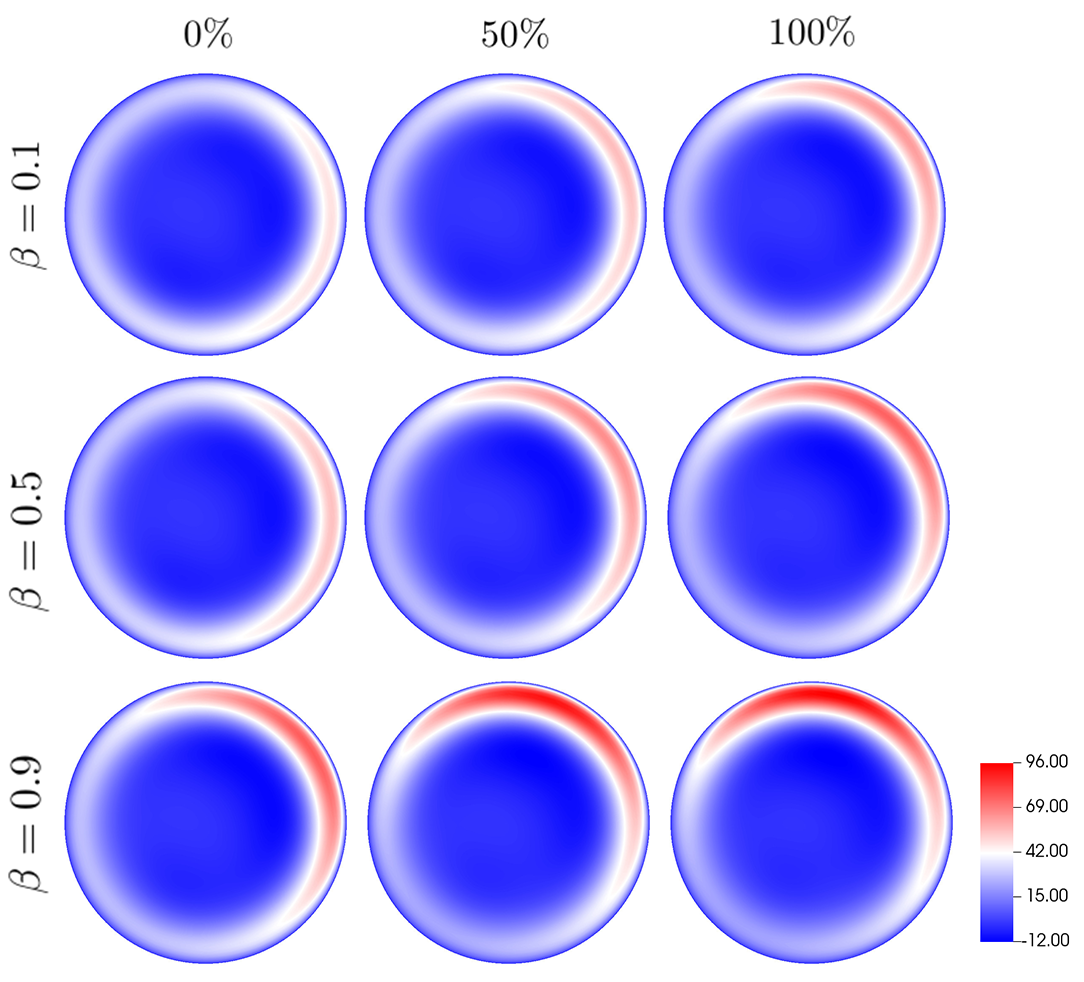}
   \end{center}
\caption{
Optimal controls $z^{\ast}$ for \eqref{eq:2d_optimal_control_problem} 
(without any Rockafellian relaxation)
for differing values of risk-tolerance $\beta$ and corruption levels.  
All plots use the same legend. 
}
\label{fig:controls_2d_no_rock}
\end{figure}

Figure \ref{fig:controls_2d_no_rock} shows the computed optimal control $z^{\ast}$ 
for the values $\beta \in \{0.1, 0.5, 0.9\}$ at 0\%, 50\% and 100\% corruption. 
For fixed $\beta$ and increasing corruption level, 
the maximum value of the optimal control increases, and it becomes increasingly localized to the 
north-eastern portion of the domain $\Omega$. 
Increasing $\beta$ at fixed corruption level has a similar effect. 
This is unsurprising, given 
the nature of the corruption (increased probabilities for tail events), and the fact that 
$\on{CVaR}_{\beta}$ averages over worst-case scenarios. 

To mitigate the influence of the heavy tails, particularly at larger $\beta$ values, we define the 
Rockafellian 
\begin{align}  
\Phi(z,t) := \min_{\gamma \in \R}  \Big( \gamma + \frac{1}{1-\beta}&
\int_{\Xi}  \Big(\frac12 \| s(\xi^{(i)}, z) - u_{\star}\|^2_{L^2(0,1)} - \gamma\Big)^{\delta}_+
(\rho(\xi) + t(\xi) ) d\xi 
  \Big)  \nonumber \\
+ &\theta \int_{\Xi} |t(\xi)| d\xi  + \iota_{\textrm{P}} (\rho+t),   \label{eq:Rock_example_2}
\end{align}
where $\theta >0$. The density $\rho$ is given by either $\rho_{\rm true}$, $\rho_{\rm corrupted}$, or 
$\frac12 (\rho_{\rm true} + \rho_{\rm corrupted})$, depending on the corruption level, and
the indicator function $\iota_{\textrm{P}}$ enforces that $\rho+t$ is a probability density 
function. The $L^1(\Xi)$ norm of $t$ is approximated with the same $N=15$ GQ nodes and 
weights as the expectation value.  

The same ADI heuristic described in the prior subsection is used here to minimize the bivariate function 
\eqref{eq:Rock_example_2}. After discretizing the integrals over $\Xi$,
the linear program that results when optimizing over the perturbation variable $t$ 
is solved with the GNU Linear Programming Kit (\texttt{glpk}) \cite{glpk}. As before, we enforce
the pointwise bound $-\rho(\xi_k) \le t(\xi_k) \le \rho(\xi_k)$ at each of the GQ collocation points
$1 \le k \le N$.  

\begin{figure}[ht]
   \begin{center}
       \includegraphics[width=0.30\textwidth]{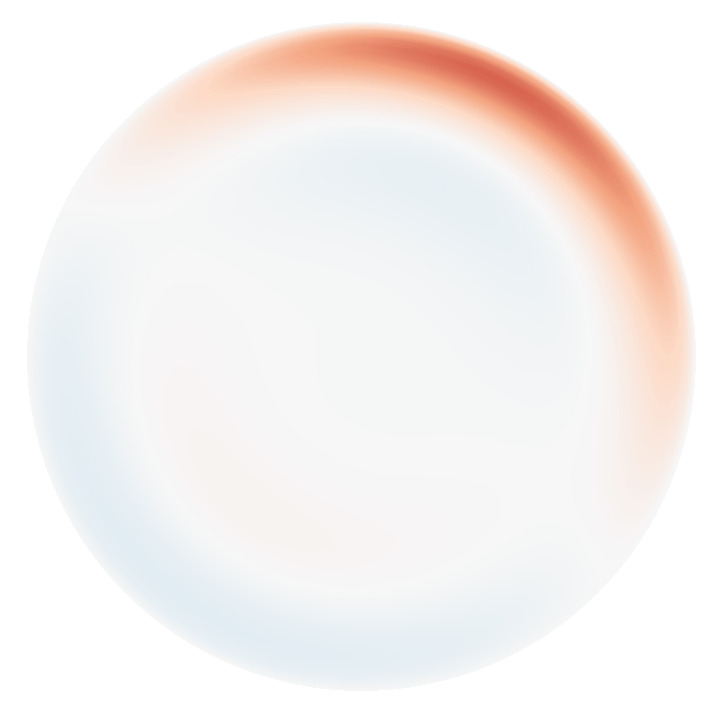}
       \includegraphics[width=0.30\textwidth]{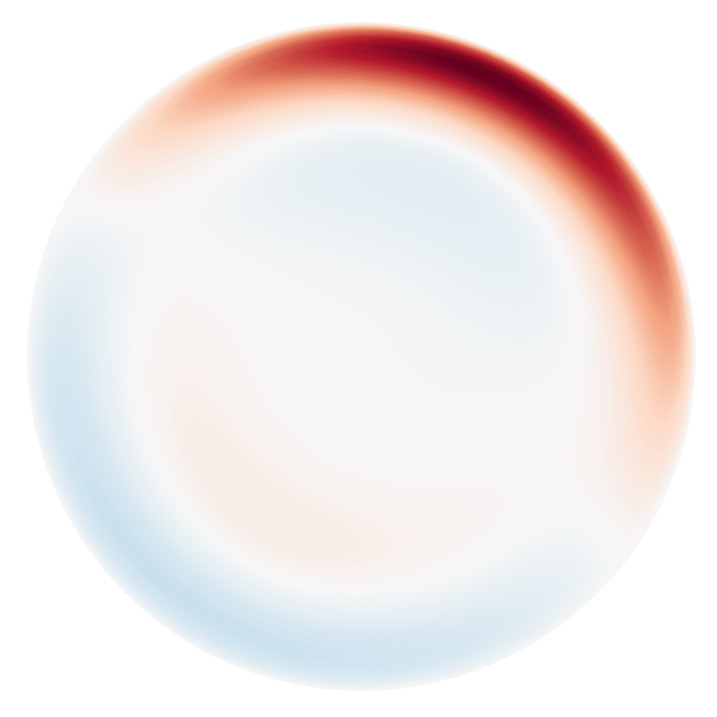}
       \includegraphics[width=0.30\textwidth]{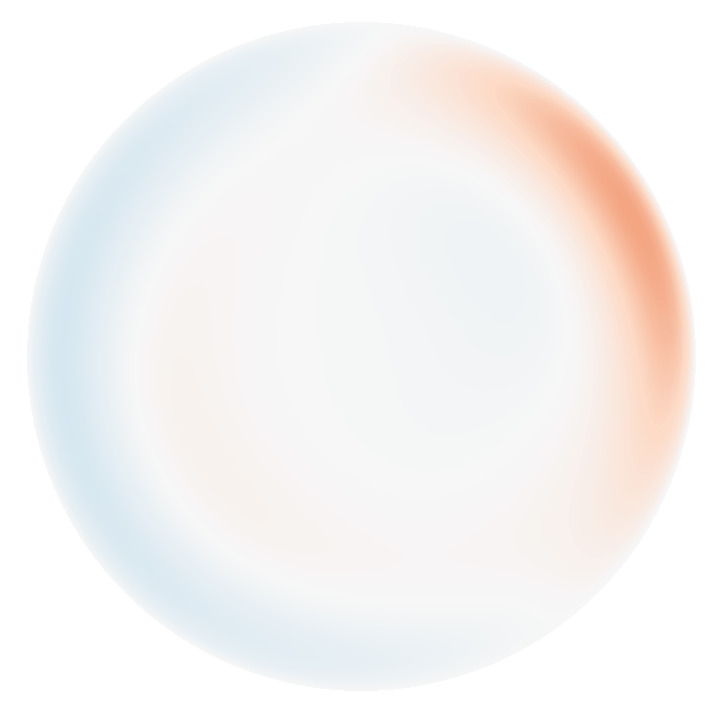}
       \includegraphics[width=0.06\textwidth]{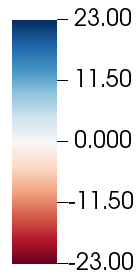}  \\
   \end{center}
\caption{
For $\beta=0.1$: pointwise errors between an uncorrupted optimal control and corrupted optimal controls 
at 50\% and 100\% corruption (left and middle, respectively), as well as the pointwise 
error for a Rockafellian optimal control at 100\% corruption (right). 
All plots use the same legend. 
}
\label{fig:rock_controls_2d_beta0pnt1}
\end{figure}

The pointwise error between (i) the computed minimizer $z^{\ast}$ of the Rockafellian \eqref{eq:Rock_example_2} 
for $\beta=0.1$, 
$\theta = 10^{-1}$, and 100\% corruption and (ii) the uncorrupted optimal control (the computed minimizer of 
\eqref{eq:2d_optimal_control_problem}) is shown in 
Figure \ref{fig:rock_controls_2d_beta0pnt1}, while the analogous error for $\beta =0.9$ is shown in 
Figure \ref{fig:rock_controls_2d_beta0pnt9}. 
Compared with the pointwise errors for the corrupted optimal controls at 50\% and 100\% (also 
shown in Figures \ref{fig:rock_controls_2d_beta0pnt1} and \ref{fig:rock_controls_2d_beta0pnt9}), 
we see that the Rockafellian errors are smaller in magnitude, particularly near the boundary of 
the domain $\partial \Omega$. 

\begin{figure}[ht]
   \begin{center}
       \includegraphics[width=0.30\textwidth]{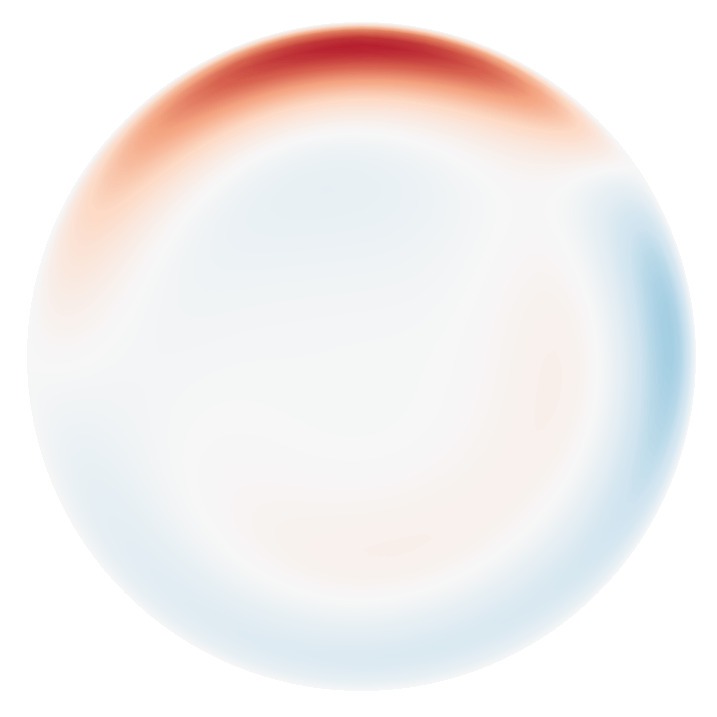}
       \includegraphics[width=0.30\textwidth]{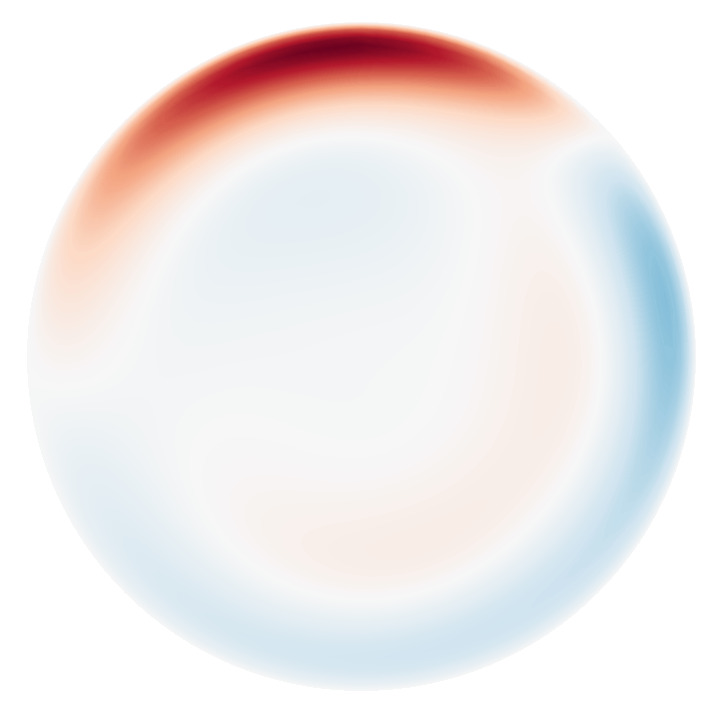}
       \includegraphics[width=0.30\textwidth]{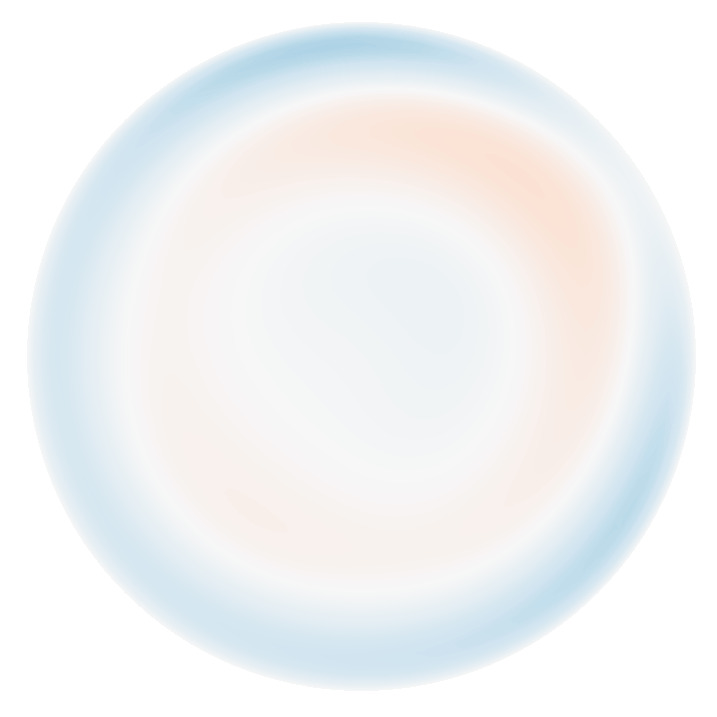}
       \includegraphics[width=0.06\textwidth]{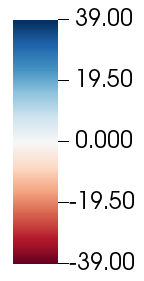}  \\
   \end{center}
\caption{
For $\beta=0.9$: pointwise errors between an uncorrupted optimal control and corrupted optimal controls 
at 50\% and 100\% corruption (left and middle, respectively), as well as the pointwise 
error for a Rockafellian optimal control at 100\% corruption (right).
All plots use the same legend. 
}
\label{fig:rock_controls_2d_beta0pnt9}
\end{figure}

The relative errors in $L^2(\Omega)$ and the ratio of errors--defined analogously to \eqref{eq:error_defns}--are 
quantified in Table \ref{table:example_2_accuracy} for various $\beta$ values and corruption levels. 
Additionally, we show
the values for $\theta \in \{10^{-1}, 10^{-2}\}$. At $\theta =1$, the optimal value of the perturbation variable is
$t^{\ast}(\xi) = 0$, and hence $z^{\ast}_{\rm Rock}$ and $z^{\ast}_{\rm corrupted}$ are the same.
The results for this case are therefore not reported in Table \ref{table:example_2_accuracy} (by definition, 
the error ratio would equal one). 
Employing Rockafellian relaxation meaningfully reduces the errors in the optimal 
controls in the presence of data corruption, although the error ratios $\cE_{\rm rel}$ are not as dramatic 
as those in the previous numerical example (cf.\ Table \ref{table:example_1_accuracy}). 
%
This is unsurprising, given that the corrupted probability densities in this example feature increased likelihoods
of tail events, rather than endogeneous outliers.  

One noteworthy result occurs for $\beta=0.1$, $\theta=10^{-2}$ and 50\% corruption: notice
that the Rockafellian optimal control 
actually is \emph{slightly worse} than the corresponding corrupted optimal control, in the sense that 
the ratio $\cE_{\rm ratio}$ is less
than one. Although the overall relative error in this case is 
 about 16\% for the Rockafellian optimal control, which is in line with the other cases, 
this example illustrates that the ``greediness'' inherent to Rockafellian relaxation 
(and, more generally, DOO) can sometimes lead to suboptimal outcomes, particularly 
when employed in a context where the impact of data corruption is minimal.


\begin{table}                                   
  \begin{center}
  \def~{\hphantom{0}}
    \begin{tabular}{ c  c  c  c  }
\hline
$\beta$ & Corruption level &  $E_{\rm rel}(z^{\ast}_{\rm Rock})$ & $\mathcal{E}_{\rm ratio}$  \\
\hline\hline
 & & $\theta=10^{-1}$ \quad\quad\quad\quad\quad\quad\quad\quad   & \\                       
\hline\hline
0.1 &  50\%           &  $4.74\cdot 10^{-2}$ &   3.05                     \\  
0.9 &  50\%           &  $2.06\cdot 10^{-1}$ &    1.22                    \\  
0.1 &  100\%          &  $1.07\cdot 10^{-1}$ &    2.39                  \\
0.9 &  100\%        &  $1.50\cdot 10^{-1}$   &    2.09                  \\
\hline\hline
 & & $\theta=10^{-2}$ \quad\quad\quad\quad\quad\quad\quad\quad  & \\                       
\hline\hline
0.1 &  50\%           & $1.64\cdot 10^{-1}$  &  0.883                    \\  
0.9 &  50\%           & $2.06\cdot 10^{-1}$  &  1.22                    \\  
0.1 &  100\%        &  $1.34\cdot 10^{-1}$   &  1.92                  \\
0.9 &  100\%        &  $2.12\cdot 10^{-1}$   &  1.49                   \\
    \end{tabular}
    \caption{
For Example 2: relative $L^2$ errors $E_{\rm rel}$ 
between the Rockafellian and true, uncorrupted optimal controls, as well as 
the ratio of $L^2$ errors $\mathcal{E}_{\rm ratio}$ for the corrupted and 
Rockafellian optimal controls--both $E_{\rm rel}$ and $\mathcal{E}_{\rm rel}$ are defined analogously to \eqref{eq:error_defns}. 
The quantities are shown at various combinations of 
corruption level, risk-tolerance $\beta$, and relaxation parameter $\theta$. 
}
  \label{table:example_2_accuracy}
  \end{center}
\end{table}

From this observation, an interesting question naturally arises: what happens when Rockafellian relaxation 
is employed for a risk-averse stochastic optimal control problem with little to no data corruption? 
To answer this and, more generally, to 
further quantify how the combination of Rockafellian relaxation (DOO) and $\on{CVaR}_{\beta}$ (DRO) performs, 
we next analyze the cumulative distribution function (CDF) of the random objective function. 

\subsection{Analysis of cumulative distribution functions} 
First, define the random variable 
\begin{equation}\label{eq:RV_example_2}
j(\xi,z) := \frac12 \| s(\xi, z ) - u_{\star} \|^2_{L^2(\Omega)} 
\end{equation}
associated to the objective function from Example 2, so that 
$s(\xi, z)$ satisfies \eqref{eq:2d_bvp_constraint}, $u_{\star}(x) = 1$, and $\xi$ has 
probability density function given by $\rho_{\rm true}$ (Eqn.~\eqref{eq:rho_true}).

\begin{figure}[ht]
\begin{center}
    \includegraphics[width=0.49\textwidth]{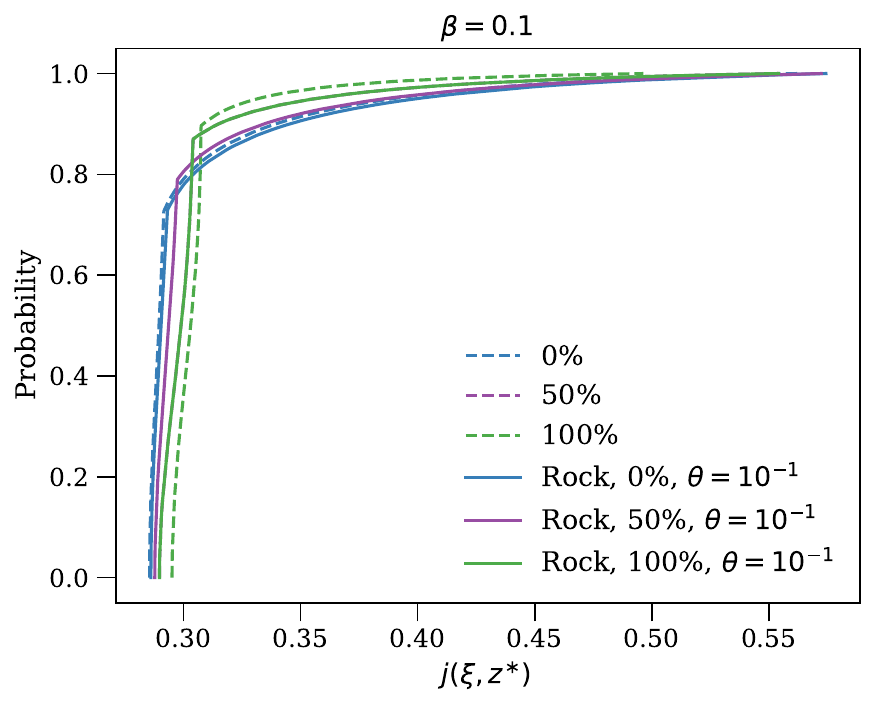}
    \includegraphics[width=0.49\textwidth]{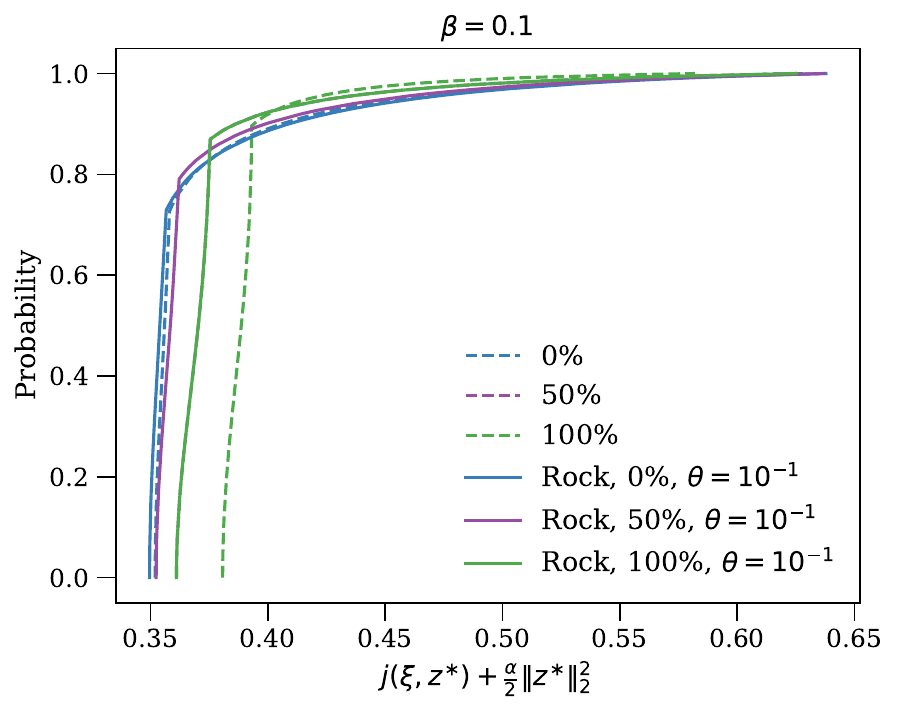}
\end{center}
\caption{
For $\beta=0.1$: CDF plots for the random objective function \eqref{eq:RV_example_2} (left) 
and total control cost (right) for various optimal controls. The dashed lines correspond
to optimal controls without any Rockafellian relaxation.
}
\label{fig:cdf_beta_0pnt1}
\end{figure}

\begin{figure}[ht]
\begin{center}
    \includegraphics[width=0.49\textwidth]{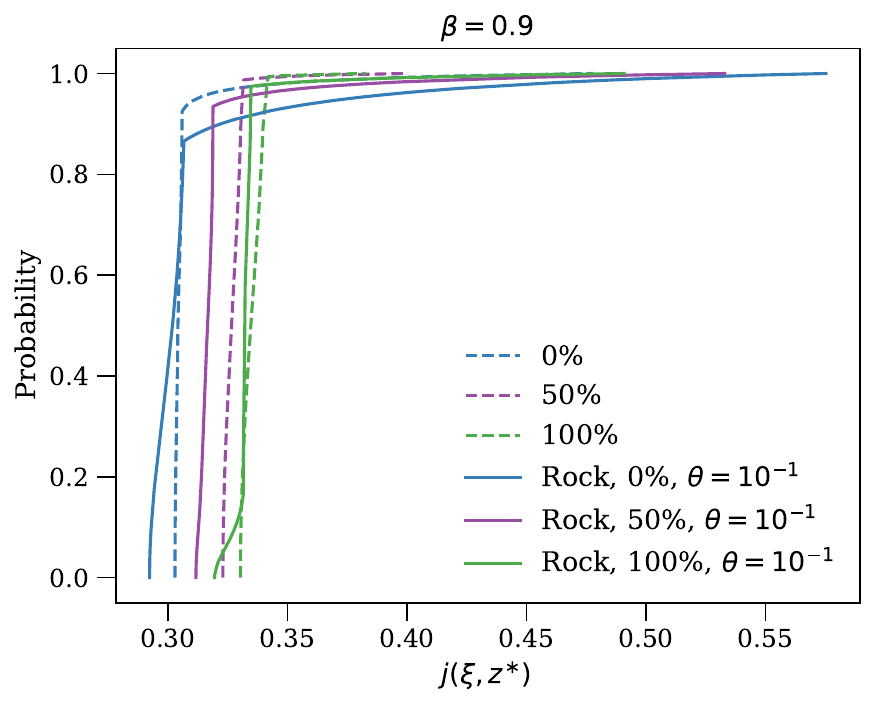}
    \includegraphics[width=0.49\textwidth]{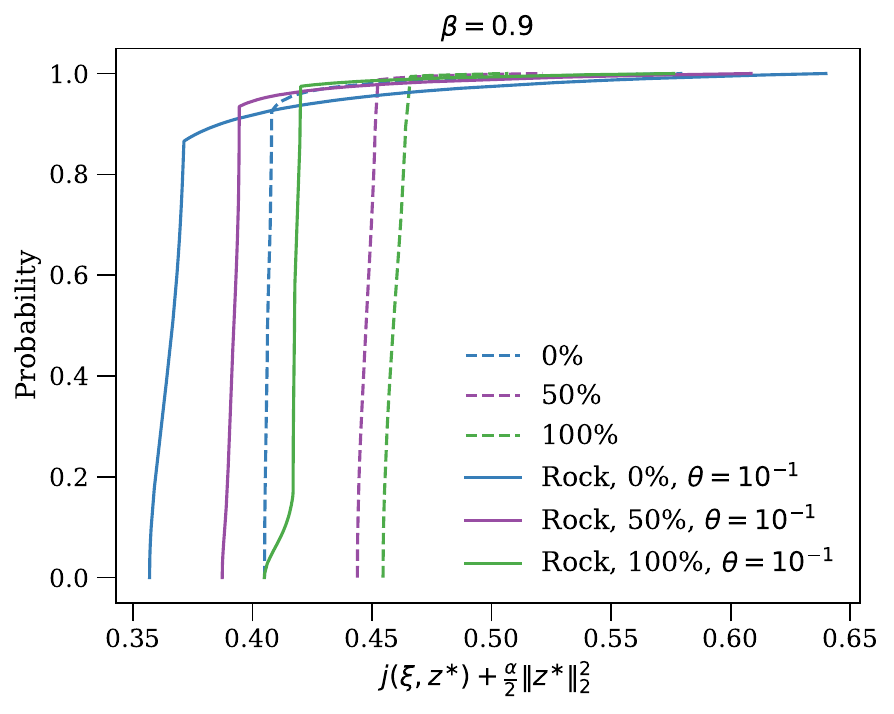}
\end{center}
\caption{
For $\beta=0.9$: CDF plots for the random objective function \eqref{eq:RV_example_2} (left) 
and total control cost (right) for various optimal controls. The dashed lines correspond
to optimal controls without any Rockafellian relaxation.
}
\label{fig:cdf_beta_0pnt9}
\end{figure}

To numerically compute the CDFs at various computed optimal controls, 
\eqref{eq:RV_example_2} is evaluated at $N = 2^{17}$ Sobol 
samples.\footnote{From \texttt{scipy.stats.qmc.Sobol}.} The CDF plots for various
optimal controls at $\beta=0.1$ are shown in Figure \ref{fig:cdf_beta_0pnt1}; 
the CDF of the \emph{total}
control cost ($j(\xi,z) + \frac{\alpha}{2} \| z\|_{L^2(\Omega)}^2$) is also shown for completeness. 
Analogous results for $\beta = 0.9$ are shown in Figure \ref{fig:cdf_beta_0pnt9}. 
In general, for fixed $\beta$ and fixed corruption level, the Rockafellian 
optimal controls are less risk-averse than the non-Rockafellian counterparts:
$j(\xi, z^{\ast})$ is smaller for Rockafellian controls at lower quantiles, but larger
at higher quantiles. 

\begin{figure}[ht]
\begin{center}
    \includegraphics[width=0.49\textwidth]{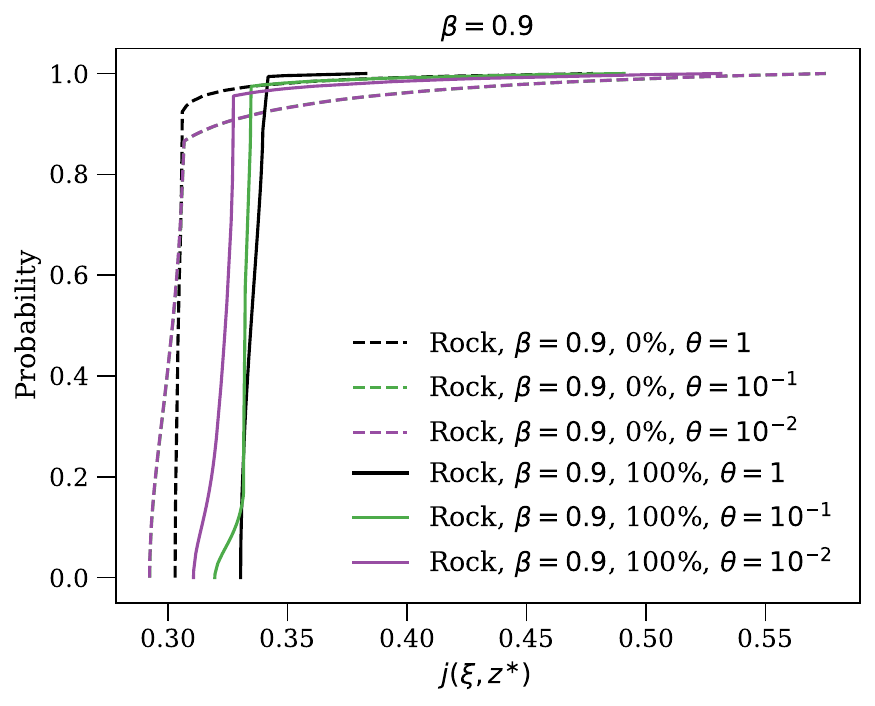}
    \includegraphics[width=0.49\textwidth]{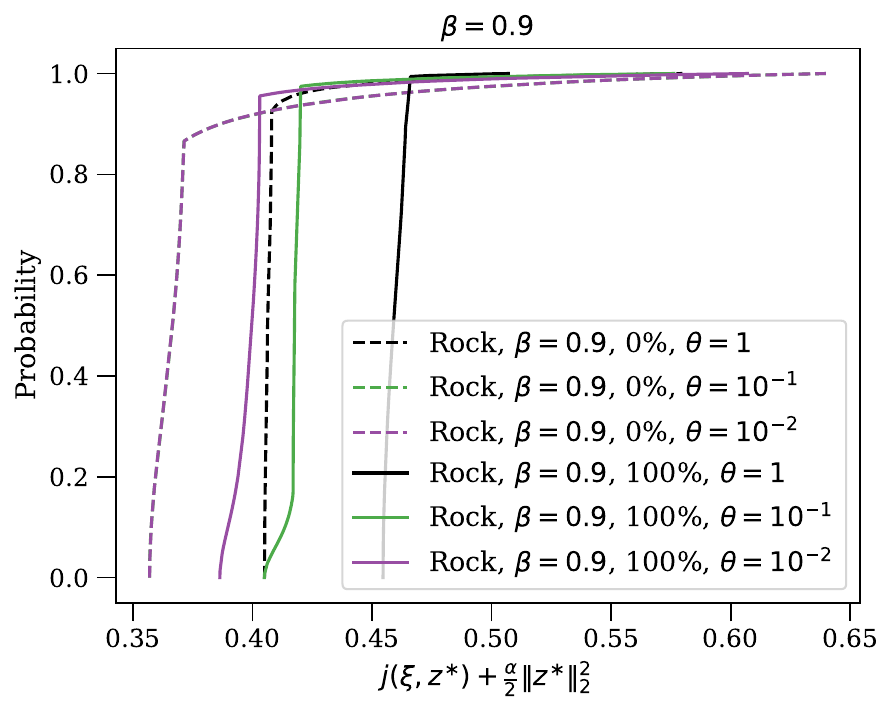}
\end{center}
\caption{
For $\beta=0.9$:  CDF plots for the random objective function \eqref{eq:RV_example_2} (left) 
and total control cost (right) for Rockafellian optimal controls at various $\theta$ values and 
corruption levels. 
}
\label{fig:rock_only_cdf_beta_0pnt9}
\end{figure}

Additionally, the level of risk-aversion of Rockafellian optimal 
controls can be altered by changing $\theta$. Loosely speaking, smaller values of 
$\theta$ correspond to lower confidence in the problem data (in this case, the veracity 
of the given probability density $\rho$); the perturbation variable $t$ is relatively free
to alter the problem data in an optimistic manner, which 
 leads to controls that are less risk-averse. 
On the flip side, larger values of 
$\theta$ correspond to increased confidence in the problem data. In this case, there is less freedom 
for $t$ to modify $\rho$, leading to increased risk-aversion. This is consistent 
with the results of Theorem \ref{thm:weak-strong-gamma-convergence-result}; recall that as the corruption level vanishes (i.e.\ 
as $\rho_{\rm corrupted} \to \rho_{\rm true}$), it is required that $\theta \to \infty$ to 
ensure $\Gamma$-convergence.  Figure \ref{fig:rock_only_cdf_beta_0pnt9} shows the effect of changing $\theta$ on 
the CDFs of the objective function at both 0\% and 100\% corruption. 
As mentioned above, note that at $\theta=1$, the optimal perturbation variable $t^{\ast}(\xi)=0$, 
which means that $z^{\ast}_{\rm Rock} = z^{\ast}_{\rm true}$ and $z^{\ast}_{\rm Rock} = z^{\ast}_{\rm corrupted}$
at 0\% and 100\%, respectively.

\begin{table}                                   
  \begin{center}
  \def~{\hphantom{0}}
    \begin{tabular}{ c c c | c c |  c c c c c  }
\hline
$\beta$ & Corruption level & $\theta$ & Min   & Max   & 0.5   & 0.75  & 0.9   & 0.95  & 0.99  \\
\hline
0.1 & 0\%                  & $10^{-1}$ & 0.3496 & 0.6379 & 0.3540 & 0.3591 & 0.4090 & 0.4630 & 0.5731   \\
0.1 & 0\%                  & --        & 0.3520 & 0.6287 & 0.3560 & 0.3605 & 0.4059 & 0.4556 & 0.5613   \\
0.9 & 0\%                  & $10^{-1}$ & 0.3569 & 0.6398 & 0.3665 & 0.3703 & 0.3866 & 0.4398 & 0.5656   \\
0.9 & 0\%                  & --        & 0.4050 & 0.5798 & 0.4062 & 0.4076 & 0.4079 & 0.4140 & 0.4871   \\
0.1 & 100\%                & $10^{-1}$ & 0.3611 & 0.6254 & 0.3704 & 0.3745 & 0.3863 & 0.4272 & 0.5417   \\
0.1 & 100\%                & --        & 0.3808 & 0.5820 & 0.3887 & 0.3925 & 0.3941 & 0.4173 & 0.4993   \\
0.9 & 100\%                & $10^{-2}$ & 0.3863 & 0.6070 & 0.4000 & 0.4024 & 0.4029 & 0.4029 & 0.5046   \\
0.9 & 100\%                & $10^{-1}$ & 0.4049 & 0.5762 & 0.4175 & 0.4191 & 0.4199 & 0.4200 & 0.4714   \\
0.9 & 100\%                & --        & 0.4546 & 0.5070 & 0.4589 & 0.4626 & 0.4641 & 0.4652 & 0.4659   
    \end{tabular}
    \caption{
For Example 2: Statistics associated with the total control cost $j(\xi,z^{\ast})+\frac{\alpha}{2} \| z^{\ast}\|^2_{L^2(\Omega)}$ at various
corruption levels and $\beta$ values. Wherever a $\theta$ value is indicated, this corresponds to a Rockafellian case.  
Min and Max refer to the minimum and maximum value of the total cost, respectively, while the final 
five columns list various quantiles of the random objective function.
}
  \label{table:example_2_cdf}
  \end{center}
\end{table}

The quantitative performance of the various optimal controls is summarized in Table \ref{table:example_2_cdf}. 
Returning to the question how utilizing $\on{CVaR}_{\beta}$ with Rockafellian relaxation affects stochastic optimal control problems when 
there is little to no data corruption present, we can see that at 0\% corruption, the Rockafellian optimal control 
$z^{\ast}_{\rm Rock}$ for $\beta=0.9$ and $\theta =10^{-1}$ performs similarly to the optimal control $z^{\ast}_{\rm true}$ for
$\beta=0.1$ and 0\% corruption. The same is true for $z^{\ast}_{\rm Rock}$ for $\theta=10^{-2}$ and 0\% corruption
 (minimal change from the $\theta=10^{-1}$ case, and hence not shown). Notice that this does lead to poor performance
at the largest quantiles ($\ge 95$\%). 
As mentioned multiple times above, however, one can easily recover the true, risk-averse optimal control by increasing
$\theta$:  
$z^{\ast}_{\rm Rock}$ at $\theta=1$ (not shown) is identical to $z^{\ast}_{\rm true}$ at 0\% corruption.

\section{Conclusions}\label{sec:conclusions}
We introduce a framework for risk-averse optimization that is robust to ambiguities 
in the true form of the underlying problem's probability distribution. 
The framework combines problem relaxation techniques (an example of DOO) with 
the Conditional Value-at-Risk (an example of DRO), although other coherent measures of risk 
could easily be incorporated. 
The work advances previously established
theoretical foundations for risk-neutral problems with strengthened $\Gamma$-convergence
results, novel existence theorems (which can accommodate infinite-dimensional 
probability spaces), and first-order optimality criteria. 

Numerical experiments on model stochastic PDECO problems illustrate the benefit of the ``blended'' 
 DOO and DRO approach. By perturbing \textit{both} the parameter $\beta$ in 
$\on{CVaR}_{\beta}$ and the regularization parameter $\theta$ inherent to Rockafellians, 
decision makers can explore the sensitivity of optimal control strategies to changing
levels of risk aversion \textit{and} problem corruption. 
This is consistent with the notion that, on the whole, optimization technology is 
more a tool for identifying possibilities than an ``oracle'' for producing a definitive
 answer \cite[Preface]{royset2021optimization}. 

One natural extension for future research is to consider Rockafellian relaxation 
for state-constrained optimization problems in the presence of distributional 
ambiguity. Another important direction is to develop theory for how to best optimize
the bivariate objective functions that arise in Rockafellian relaxation. 
Lastly, the technique can be used in a variety of applications, for example, 
topology optimization, structural weakness identification, and trajectory
optimization.

\vskip2em

\acknowledgments{The second and fourth authors gratefully acknowledge Professors Harbir Antil and Gabriel Gatica, who made possible a visit to George Mason University, during which the core of this work was carried out. We are also thankful to Professor Johannes Royset for reviewing the manuscript and sharing his comments.}

\bibliographystyle{amsalpha}
\bibliography{references}
\end{document}